\newcommand \dx{\mathrm{d}x}
\newcommand \ds{\mathrm{d}{\mathscr H}^{n-1}}
\newcommand \Lb{\Lambda}
\newcommand \Hs{\mathscr{H}^{n-1}}
\newcommand \om{\omega}
\newcommand \ra{\rightarrow}
\newcommand \sq{\subseteq}
\newcommand \Om{\Omega}
\newcommand \eps{\epsilon}
\newcommand \Per{\text{Per}}
\newcommand \goto{\underset{\eps\to 0}{\longrightarrow}}
\newcommand \R{\mathbb{R}}
\newcommand \E{\mathscr{E}}
\newcommand \dv{\mathrm{div}}
\newcommand \loc{\mathrm{loc}}
\newcommand \Rn{\mathbb{R}^n}
\newcommand \RP{\mathbb{RP}^{n-1}}
\newtheorem{theorem}{Theorem} 
\newtheorem{corollary}[theorem]{Corollary} 
\newtheorem{lemma} [theorem]{Lemma} 
\newtheorem{remark} [theorem]{Remark} 
\newtheorem{proposition} [theorem]{Proposition} 
\theoremstyle{definition}
\begin{document}

\title[\ ]{Shape optimization of a thermal insulation problem}

\author[D. Bucur]
{Dorin Bucur}
\address[Dorin Bucur]{Univ. Savoie Mont Blanc, CNRS, LAMA \\
73000 Chamb\'ery, France
}
\email[D. Bucur]{  dorin.bucur@univ-savoie.fr}

\author[M. Nahon]{Mickaël Nahon}
\address[Mickaël Nahon]{Univ. Savoie Mont Blanc, CNRS, LAMA \\ 73000 Chamb\'ery, France}
\email{  mickael.nahon@univ-smb.fr}

\author{Carlo Nitsch}
\address[Carlo Nitsch]{University of Napoli Federico II \& Scuola Superiore Meridionale \\
80126 Naples, Italy
}
\email{  c.nitsch@unina.it}

\author{Cristina Trombetti}
\address[Cristina Trombetti]{University of Napoli Federico II \\
80126 Naples, Italy
}
\email{  cristina@unina.it}

\keywords{Free Discontinuity, Robin boundary conditions, Thermal insulation}
\subjclass[2020]{ 35Q79, 49Q10. }
\maketitle

\begin{abstract} 
We study a shape optimization problem involving a solid $K\subset\R^n$ that is maintained at constant temperature and is enveloped by a layer of insulating material $\Om$ which obeys a generalized boundary heat transfer law. 
We minimize the energy of such configurations among all $(K,\Om)$ with prescribed measure for $K$ and $\Om$, and no topological or geometrical constraints. In the convection case (corresponding to Robin boundary conditions on $\partial\Om$) we obtain a full description of minimizers, while for general heat transfer conditions, we prove  the existence and regularity of solutions and give a partial description of minimizers.

\end{abstract}

\section{Introduction} 
 Given a measurable set $K \sq \R^n$ along with a Lipschitz open set $\Omega\supset K$, we consider the energy functional
\begin{equation}\label{eq_optintro}
E_{\Theta}(K,\Om):=\min\left\{ \int_{\Om}|\nabla v|^2\dx+\int_{\partial\Om}\Theta(v)\ds,\ v \in H^1(\Om,[0,1]): v = 1 \mbox{ a.e. on } K\right\}
\end{equation}
where $\Theta:[0,1]\to\R_+$ is a rather general nondecreasing function that vanishes at $0$.  
If  all data are smooth, meaning $\Om$ is a smooth open set, $K$ is a smooth compact set and $\Theta\in\mathcal{C}^1$, a minimizer $u$ 

 satisfies 
\begin{equation}\label{smoothcase}
\begin{cases}
\Delta u=0 & \Om\setminus K,\\
-\frac{\partial u}{\partial \nu}=\frac{1}{2}\Theta'(u) &\partial\Om\cap\{u>0\}\setminus\partial K,\\
-\frac{\partial u}{\partial \nu}\leq\frac{1}{2}\Theta'(u) &\partial\Om\cap\{u=0\},\\
u\equiv 1 & K.
\end{cases}
\end{equation}
A classical prototype, $\Theta (u)= \beta u^2$  for some $\beta>0$, corresponds to the so called Robin boundary conditions $\frac{\partial u}{\partial \nu} +\beta u=0$ for the harmonic function $u$ minimizing the energy,  $\nu$ being the outward normal at the boundary.
The physical motivation which leads us to study the functional $E_{\Theta}(K,\Om)$ and related shape optimization problems can be found in optimal design in thermal insulation.\\
Consider a body $K$ of given constant fixed temperature $T_K$, surrounded by an insulator $A$. The temperature distribution $T$ inside the insulator satisfies the classical heat equation (Laplace equation) with the condition that $T$ is continuous across the boundary which separates $A$ and $K$. If we now assume that the body $\Omega=K\cup A$ is immersed into an environment of fixed temperature $T_e$, the heat exchange rate across the surface of $\Omega$ (between the body and the environment) has to be modeled according to the physical process governing the mechanism. The most common assumption is to assume that the temperature at the boundary of $\Omega$ is kept constant ($T_e$), leading to the so-called Dirichlet boundary conditions, which corresponds to conduction heat transfer. But if we assume that the environment is a fluid (gas or liquid) then, convection heat transfer, radiation heat transfer or even more general laws, have to be taken into account. If for instance, convection heat transfer is the leading mechanism, then the rate of heat flux per unit of surface, across the solid fluid interface, is proportional to $T|_{\partial \Omega}-T_e$ (also known as Newton's law of cooling), $T|_{\partial \Omega}$ being the temperature $T$ at the solid surface. While for radiation heat transfer (according to Stefan-Boltzmann law), the heat flux per unit of surface, across the solid fluid interface, is proportional to $T_{\partial \Omega}^4-T_e^4$.

On the other hand, the heat flux across a solid surface is proportional (Fourier's law) to the normal derivative of the temperature $-\frac{\partial T}{\partial \nu}$.

Assume that $T_K>T_e$ and denote by $u=\frac{T-T_e}{T_K-T_e}$.
One can easily check that, up to a constant of proportionality, convection corresponds to $$\Theta(u)=u^2.$$ 

Radiation, on the other hand, is modeled up to a multiplicative constant by 
\begin{equation}\label{bnnt100}
\Theta(u)=\frac{u^5}{5}+\gamma u^4+2\gamma^2 u^3+2\gamma^3 u^2,
\end{equation}
where $\gamma=\frac{T_e}{T_K-T_e}$.

Both mechanisms can be taken into account simultaneously, upon considering a linear combination of the previous functions. 
When considering a quadratic $\Theta$ (pure convection heat transfer at the boundary), $E_{\Theta}$ is proportional to the heat loss rate. In general, regardless of the choice of $\Theta$, the energy functional $E_{\Theta}$ can be considered a measure of the goodness of the thermal insulation; the less the energy the better the insulation.

In this article we are interested in the shape optimization problem of both $K$ and $\Om$ of prescribed volume which lead  to a minimal energy configuration. For a given set $K$, when only the geometry of $\Om$ is unknown,  the problem has already been considered in the literature in \cite{BL14}, \cite{CK16}, \cite{Kr19}, with a different purpose, namely to obtain qualitative information on the free boundary of $\Om$. Its analysis relies on the fine study of optimal configurations in the framework of free discontinuity problems in $SBV$.  

We search to optimize both $K$ and $\Om$ and the purpose is to understand whether or not an optimal configuration is given by two concentric balls (as common sense suggests). From this perspective, our problem is more of isoperimetric type. As we shall prove, depending on the dissipation law $\Theta$, we can give either full description of the optimal  sets as two concentric balls (for instance in the convection case) or some partial answer leading to the same geometric configuration for more general dissipation laws. A striking phenomenon, which has been observed in some specific geometric configurations in \cite{DNT20},  and that we have to handle is the following: if only a small amount of insulator is available, then in some cases it is better to not use it ($\Omega=K$).

Let us denote by $\om_n$ the volume of the unit ball and by $M:= R^n \om_n$ the volume of the ball of radius $R \ge 1$. Without restricting generality, we shall fix the measure of $K$ to be $\om_n$ and leave the other parameters free (the general case is obtained by rescaling). Given $M  \ge \om_n$ and $\Lb >0$, we are interested in the following minimization problem: find an open set $\Om$ and a relatively closed set $K$, solutions of
\begin{equation}\label{bnnt01}
\inf_{K\subset \Om,\ |K|= \om_n,\ |\Om|\leq M}E_{\Theta}(K,\Om),
\end{equation}
or in its penalized version
\begin{equation}\label{bnnt02}
\inf_{K\subset \Om,\ |K|= \om_n}E_{\Theta}(K,\Om)+ \Lb |\Om|.
\end{equation}

 Throughout the paper we assume that $\Theta:[0,1] \ra \R $ is a  lower semicontinuous, nondecreasing function such that $\Theta(0)=0$.    Here are our main results.

\begin{theorem}[The convection case]\label{bnnt03}
For $\Theta(u)=\beta u^2$  and $M=R^n\om_n$, the solutions of problems \eqref{bnnt01} consist of two concentric balls. The radius of the outer ball equals either $R$ or $1$, according to $\min\{E_\Theta (B_1,B_R),E_\Theta (B_1,B_1) \}$ and the associated state function $u$ is radial. 

\end{theorem}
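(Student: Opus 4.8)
The plan is to reduce the free problem \eqref{bnnt01} to a purely one-dimensional competition between the two radial configurations $(B_1,B_R)$ and $(B_1,B_1)$, using a rearrangement/symmetrization argument combined with a careful analysis of the radial energy. First I would establish existence of a minimizer $(K,\Om)$ for \eqref{bnnt01}; this follows from the general existence theory for the functional $E_\Theta$ (lower semicontinuity of the Dirichlet-plus-boundary energy along a minimizing sequence, compactness in $SBV$, and the volume constraints being closed). Then I would show that any minimizer can be replaced by a radial one without increasing the energy: since $\Theta(u)=\beta u^2$, the boundary term $\int_{\partial\Om}\beta v^2\,\ds$ and the Dirichlet term $\int_\Om|\nabla v|^2\dx$ both behave well under spherical (Schwarz) symmetrization of the pair $(v,\Om)$ — the Dirichlet energy does not increase under symmetrization of the superlevel sets, and for the boundary term one invokes the relevant isoperimetric-type inequality for the weighted perimeter (this is exactly the kind of estimate developed for the problem with $K$ fixed in \cite{BL14,CK16,Kr19}, which I would cite). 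One must also symmetrize $K$ to a ball $B_1$ concentric with the symmetrized $\Om$, and check this is compatible with the constraint $v\equiv1$ on $K$.

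Once reduced to radial competitors, I would write $\Om=B_\rho$ for some $\rho\in[1,R]$ (the constraint $|\Om|\le M$ gives $\rho\le R$; taking $\rho$ as large as allowed can only help the boundary cost, so one expects either $\rho=R$ or, in the degenerate regime, $\rho=1$, i.e.\ $\Om=K$). For fixed radial annulus $B_1\subset B_\rho$, the optimal $u$ solves the ODE obtained from \eqref{smoothcase}: $u$ harmonic (radial) in $B_\rho\setminus B_1$, $u\equiv1$ on $B_1$, with Robin condition $-\partial_\nu u=\beta u$ on $\partial B_\rho$ (the obstacle $u\ge0$ is inactive here since $u>0$ throughout by the maximum principle). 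This gives $u(x)=u(|x|)$ explicitly — a linear combination of $1$ and $r^{2-n}$ (or $\log r$ if $n=2$) — and hence an explicit expression $g(\rho):=E_\Theta(B_1,B_\rho)$. The key step is then to show that $g$ is monotone decreasing in $\rho$ on $[1,R]$, OR, if it is not globally monotone, that its minimum on $[1,R]$ is attained at an endpoint; I expect $g$ to be strictly decreasing (more insulator always helps, once you use it), so the optimal outer radius is $R$, unless the comparison with the ``no insulator'' value $E_\Theta(B_1,B_1)=\beta|\partial B_1|=n\om_n\beta$ forces $\Om=B_1$ — but in fact $g(1)=E_\Theta(B_1,B_1)$ by continuity, so monotonicity alone yields that the infimum over radial competitors is $\min\{g(R),g(1)\}=\min\{E_\Theta(B_1,B_R),E_\Theta(B_1,B_1)\}$, with $\Om=B_R$ or $\Om=B_1$ respectively.

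I anticipate two main obstacles. The first and more serious is the symmetrization step: one needs a symmetrization that simultaneously controls the bulk Dirichlet energy and the weighted boundary integral $\int_{\partial\Om}v^2\,\ds$ while also handling the set $K$ and the pointwise constraint $v\equiv1$ on $K$; the subtlety is that $\partial\Om$ itself is free and part of the unknown, so the boundary term is really a free-discontinuity term and the right framework is $SBV$ with the surface energy $\int_{J_u}(\dots)\,\ds$ — here one must appeal to the existence/regularity results for $E_\Theta$ established earlier in the paper and to a rigidity (equality case) analysis showing that the only radial minimizers are the stated concentric balls (in particular, $K=B_1$ up to translation). The second, more routine obstacle is verifying strict monotonicity of $g$: this reduces to differentiating the explicit one-dimensional energy in $\rho$ and checking a sign, which one can do either by direct computation or, more cleanly, by a domain-perturbation (Hadamard) argument showing $g'(\rho)<0$ because enlarging $B_\rho$ relaxes the minimization. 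Finally, I would note that if the symmetrization argument only gives ``$\le$'' one recovers the reverse inequality trivially since $(B_1,B_R)$ and $(B_1,B_1)$ are admissible competitors, closing the proof.
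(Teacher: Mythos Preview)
Your proposal has two genuine gaps, and both are fundamental rather than technical.

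\medskip
\textbf{First gap: symmetrization does not control the Robin boundary term.} You write that ``the boundary term $\int_{\partial\Om}\beta v^2\,\ds$ and the Dirichlet term $\int_\Om|\nabla v|^2\dx$ both behave well under spherical (Schwarz) symmetrization''. This is exactly the step that fails. Schwarz symmetrization does decrease the Dirichlet energy (P\'olya--Szeg\H{o}), but there is no general inequality of the form $\int_{\partial\Om^*}(v^*)^2\,\ds\le\int_{\partial\Om}v^2\,\ds$ when $\Om$ is free: symmetrization reduces the perimeter of $\Om$, but the trace $v^*$ on $\partial\Om^*$ can be larger than the trace of $v$ on $\partial\Om$, and the two effects need not compensate. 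This is precisely why the Faber--Krahn inequality for the Robin Laplacian cannot be proved by symmetrization and instead requires the Bossel--Daners $H$-function method. The paper follows that route: it introduces the level-set functional $H(t,\phi)$, proves that for any test function $\phi$ there is a level $t$ with $H(t,\phi)\le E_\beta(K,\Om)$, and then chooses $\phi$ as the dearrangement of $|\nabla u^*|/u^*$ along the level sets of $u$ to obtain $H(t,\phi)\ge E_\beta(B_1,B_R)$. The references \cite{BL14,CK16,Kr19} you invoke deal with existence and regularity for fixed $K$, not with any symmetrization inequality for the weighted boundary integral.

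\medskip
\textbf{Second gap: the radial energy is not monotone.} Your expectation that $g(\rho)=E_\Theta(B_1,B_\rho)$ is strictly decreasing (``more insulator always helps'') is false. A direct computation gives
\[
g'(\rho)\le 0\ \text{iff}\ \rho\ge \frac{n-1}{\beta},
\]
so when $\beta<n-1$ the function $g$ \emph{increases} on $[1,(n-1)/\beta]$ before decreasing, and when $n\ge3$ and $\beta\le n-2$ the global minimum on $[1,\infty)$ is attained at $\rho=1$. This non-monotonicity is essential to the structure of the theorem (it is why the answer is $\min\{E_\Theta(B_1,B_R),E_\Theta(B_1,B_1)\}$ rather than simply $E_\Theta(B_1,B_R)$), and it also feeds back into the $H$-function argument: the comparison requires the pointwise bound $|\nabla u^*|/u^*\le\beta$, which by Lemma~\ref{bnnt13} is equivalent to $E_\beta(B_1,B_\rho)\ge E_\beta(B_1,B_R)$ for all $\rho\in[1,R]$. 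When $\beta>n-2$ this can be arranged (possibly after replacing $R$ by the threshold radius $R_\beta$); the remaining range $\beta\le n-2$ is handled separately via the penalized problem (Corollary~\ref{CorPenalized}).
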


\begin{theorem} \label{bnnt04} 
For any  $\Lambda>0$ and any admissible $\Theta$, the solution of problem \eqref{bnnt02} consists of two concentric balls
and the associated state function $u$ is radial.

\end{theorem}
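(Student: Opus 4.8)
The plan is to reduce the shape optimization problem \eqref{bnnt02} to a one-dimensional rearrangement inequality via symmetrization. Fix any admissible pair $(K,\Om)$ together with a minimizer $u \in H^1(\Om,[0,1])$ for $E_\Theta(K,\Om)$, extended by $0$ to all of $\R^n$. First I would replace $u$ by its Schwarz symmetric decreasing rearrangement $u^*$, defined on all of $\R^n$; let $\Om^* := \{u^* > 0\}$ (an open ball, up to the usual care about the essential interior) and $K^* := \{u^* = 1\}$ (a closed ball). By the layer-cake formula and the coarea formula, $|\Om^*| = |\{u>0\}| \le |\Om|$ and $|K^*| = |\{u=1\}| \ge |K| = \om_n$; since $u=1$ a.e.\ on $K$ the latter gives $|K^*|\ge\om_n$, and by truncating the ball $K^*$ down to radius $1$ (which only enlarges the region where $u^*$ is free to vary, never increasing the constraint burden) we may assume $|K^*|=\om_n$. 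The Dirichlet energy does not increase under rearrangement by the Pólya--Szegő inequality, $\int_{\R^n}|\nabla u^*|^2 \le \int_{\R^n}|\nabla u|^2 = \int_\Om |\nabla u|^2$.

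The subtle term is the boundary integral $\int_{\partial\Om}\Theta(u)\,\ds$. Here I would not try to symmetrize $\partial\Om$ directly; instead I would bound this term from below using only that $\Om$ is Lipschitz and that $u\in H^1(\Om)$ vanishes outside $\Om$, via a trace/coarea argument: for the \emph{extended} function $u$ on $\R^n$ one has, for a.e.\ level $t\in(0,1)$, that $\Hs(\partial^*\{u>t\}) \le \Hs(\{u=t\}\cap\Om) + \Hs(\partial\Om\cap\{u>t\})$ — the jump set of the extended $SBV$-type function sits inside $\partial\Om$ — and then integrate a suitable function of $t$. More concretely, writing $\psi$ for a function with $\psi(0)=0$ and $\psi'(t)=\tfrac12\Theta'(t)$ (the natural primitive appearing in \eqref{smoothcase}), one expects an identity of the form
\begin{equation*}
\int_{\partial\Om}\Theta(u)\,\ds \;\ge\; c_n\int_0^1 \Theta'(t)\,\bigl(\text{something}\bigr)\,\mathrm{d}t,
\end{equation*}
and the point is that the ball is the unique minimizer of this "something" for each fixed value of the enclosed volume, by the classical isoperimetric inequality applied level-by-level. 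This is exactly the mechanism by which, for the radial competitor, every inequality used becomes an equality. The cleanest route is probably: bound $E_\Theta(K,\Om)$ from below by the radial energy $E_\Theta(B_1,\Om^{*})$ where $\Om^*$ is the ball with $|\Om^*|=|\{u>0\}|$, noting that the radial problem on a ball of fixed volume has energy monotone in a way compatible with $|\Om^*|\le|\Om|$, so that $E_\Theta(K,\Om)+\Lambda|\Om| \ge E_\Theta(B_1,\Om^*)+\Lambda|\Om^*|$, with equality forcing $\Om$ to be (equivalent to) a ball concentric with $K$ and $u=u^*$.

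**The main obstacle** is the boundary term: unlike the Dirichlet energy, $\int_{\partial\Om}\Theta(u)\,\ds$ is not an intrinsic quantity of the function $u$ on $\R^n$ — it depends on the geometry of $\partial\Om$, and one must rule out pathologies where $\partial\Om$ is very wiggly but $u$ is small there, and where $\{u>0\}$ is a strict subset of $\Om$ (wasted insulator). The resolution I anticipate is to first argue, using the minimality of the pair $(K,\Om)$ itself, that one may assume $\{u>0\}$ is (essentially) all of $\Om\setminus K$ up to a set on which we can harmlessly delete insulator — so that $\partial\Om$ coincides $\Hs$-a.e.\ with $\partial^*\{u>0\}$ together with a piece where $u=0$ — and then to invoke the relative isoperimetric / perimeter lower-semicontinuity machinery exactly as in the free-discontinuity analysis of \cite{BL14,CK16,Kr19}. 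Once the problem is honestly one-dimensional (radial profiles $u(r)$ on an interval, with a free outer radius penalized by $\Lambda\om_n r_{\mathrm{out}}^n$), minimality of concentric balls is immediate, and the equality cases in Pólya--Szegő and in the isoperimetric inequality give that \emph{every} minimizer is two concentric balls with radial state function. Finally, since the argument produced a lower bound attained exactly by the radial configuration, existence of a minimizer for \eqref{bnnt02} comes for free: it suffices to minimize the explicit radial energy over the single scalar parameter $r_{\mathrm{out}}\in[1,\infty)$, which is a continuous coercive function of one variable.
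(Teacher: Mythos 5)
Your proposal is a Schwarz-rearrangement argument, whereas the paper's proof of this theorem is a reflection (Steiner-type) argument: it first establishes, via the $SBV$ relaxation and the a priori estimates of Section~\ref{bnnt07}, that a minimizer exists and is regular enough to carry optimality conditions; it then constructs a centrally symmetric minimizer by reflecting across hyperplanes that bisect $K$, uses the Alt--Caffarelli free-boundary condition to show $\partial K\cap\Om$ is a union of centered spherical arcs, iterates reflections to force $K=B_1$, rules out disconnected minimizers by a scaling argument, and finishes with a spherical-cap rearrangement of $\Om$ about an axis. Reflections are compatible with the $SBV$ energy because they act locally, preserve all measures exactly when the reflecting hyperplane bisects the relevant level sets, and do not create new jump set.

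There is a genuine gap in your proposal, and it sits exactly at the step you flag as ``the main obstacle.'' You write a would-be lower bound $\int_{\partial\Om}\Theta(u)\ds\geq c_n\int_0^1\Theta'(t)\,(\text{something})\,\mathrm{d}t$ with the integrand unspecified; no such rearrangement inequality is available for a general nondecreasing $\Theta$. The honest coarea identity is $\int_0^1\Theta'(t)\,\mathrm{Per}(\{u>t\})\,\mathrm{d}t=\int_\Om\Theta'(u)|\nabla u|\dx+\int_{\partial\Om}\Theta(u)\ds$, and absorbing the cross term $\int_\Om\Theta'(u)|\nabla u|\dx$ into the Dirichlet energy by Young's inequality is precisely what forces the restrictive hypotheses $\Theta'(1)^2/\Theta(1)<4(n-1)$ and $M$ near $\om_n$ in the last part of Theorem~\ref{bnnt03.2}; it cannot give Theorem~\ref{bnnt04} for every $\Lambda$ and every admissible $\Theta$. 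Moreover the Dirichlet step is itself broken: the extension of $u\in H^1(\Om)$ by zero lies only in $SBV(\R^n)$, with jump on $\partial\Om$, and the classical P\'olya--Szeg\H{o} inequality does not apply to it. Schwarz rearrangement of an $SBV$ function can strictly increase $\int|\nabla u|^2$; already in one dimension, with $u=1$ on $[0,1]$, affine from $1$ down to $\delta$ on $[1,2]$, and zero elsewhere, one computes $\int|\nabla u^*|^2=4(1-\delta)^2>(1-\delta)^2=\int|\nabla u|^2$, because the rearrangement converts the jump at $0$ into a steeper slope spread over the left half. This incompatibility of Robin-type boundary energies with Schwarz rearrangement is the very reason the paper uses Bossel--Daners' $H$-function for Theorem~\ref{bnnt03} and reflections for Theorem~\ref{bnnt04}, rather than a global rearrangement; and it is also why the existence of a minimizer cannot be obtained ``for free'' at the end of your argument, but must be proved up front (as in Proposition~\ref{prop:existence}) before any optimality condition can be invoked.
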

We introduce the following hypothesis 
\begin{equation}\label{HypTheta}
 \inf_{0<s<1}\frac{\Theta(s/3)}{\Theta(s)}>0.
\end{equation}

\begin{theorem}[The general case] \label{bnnt03.2} 
 There exists some dimensional constant $c_n$, such that if $\Theta$ satisfies  hypothesis \eqref{HypTheta} and
\begin{equation}\label{HypT}
M<\om_n+ c_n\left(\inf_{0<s<1}\frac{\Theta(s/3)}{\Theta(s)}\right)^{2n}\int_{0}^{1}\frac{t^{2n-1}\mathrm{d}t}{\Theta(t)^{n}},
\end{equation}
then problem \eqref{bnnt01} has a solution $(K,\Om)$. If $|\Om|< M$ then $(K, \Om)$ are two concentric balls.
Otherwise, $\Om$ is an open set with rectifiable topological boundary such that $\Hs(\partial\Om)<\infty$ and  $K$ is relatively closed in $\Om$ with locally finite perimeter in $\Om$. The temperature $u\in H^1(\Om)$ is   $\mathcal{C}^{0,\frac{2}{n+2}}_\loc(\Om)$. In two dimensions, $\partial K\cap\Om$ is analytic.  

If moreover $\Theta$ is $\mathcal{C}^1$ in a neighbourhood of $1$ and
\[\frac{\Theta'(1)^2}{\Theta(1)}< 4( n-1),\]
then there exists $M >\om_n$ depending on $n$ and $\Theta$ such that the solution of problem \eqref{bnnt01} is $K= \Om=B_1$.

\end{theorem}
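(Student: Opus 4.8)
The plan is to realize \eqref{bnnt01} as a free-discontinuity problem on $\Rn$ and apply the direct method, then to read off the geometry of the resulting minimizer. To an admissible triple $(K,\Om,u)$ I associate $v:=u\,\mathbf 1_\Om\in SBV(\Rn;[0,1])$; the energy then corresponds, up to an identification on the jump set, to $\int_{\Rn}|\nabla v|^2\dx+\int_{J_v}\big(\Theta(v^\vee)+\mathbf 1_{\{v^\wedge>0\}}\Theta(v^\wedge)\big)\ds$, to be minimized over $v$ with $|\{v=1\}|\ge\om_n$ and $|\{v>0\}|\le M$; conversely every such $v$ is approximated in energy by genuine admissible triples, so the two infima agree. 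The crux of existence is compactness of a minimizing sequence $v_k$: the bulk term bounds $\nabla v_k$ in $L^2$, but bounding $\Hs(J_{v_k})$ and, above all, preventing $\{v_k>0\}$ from dispersing into a set of vanishing measure (so that $|\{v>0\}|\le M$ survives in the limit) requires a density/non-degeneracy estimate. This is exactly what \eqref{HypTheta} and \eqref{HypT} provide: the doubling-type bound $\inf_s\Theta(s/3)/\Theta(s)>0$ makes the surface integrand BV-elliptic up to a fixed constant (controlling the gain from splitting one jump into many small ones), while the smallness of $M$ forces that an optimal configuration cannot sustain a long, thin insulating film — truncating $v_k$ below a small level $t$ and filling in decreases the energy unless the slab $\{0<v_k<t\}$ carries volume bounded below in terms of $t/\Theta(t)$, and $\int_0^1 t^{2n-1}\Theta(t)^{-n}\,\mathrm dt$ is precisely the total budget such slabs may consume; once $M-\om_n$ lies below that budget the mass cannot leak. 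I expect extracting this density estimate, i.e. making \eqref{HypT} ``do its job'', to be the principal obstacle of the existence part.

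Granted uniform bounds on $\|\nabla v_k\|_{L^2}+\Hs(J_{v_k})$ and the density estimates, Ambrosio's $SBV$ compactness and lower semicontinuity theorems (convexity for the bulk term, BV-ellipticity from \eqref{HypTheta} for the surface term) produce a relaxed minimizer $v$, and the density estimates pass to the limit so that $|\{v>0\}|\le M$ and $|\{v=1\}|\ge\om_n$ are retained. From $v$ I would then extract the geometry: the density estimates force $\Om:=\{v>0\}$ to be equivalent to an open set with $\Hs(\partial\Om)<\infty$ and $\partial\Om$ rectifiable, $K:=\{v=1\}$ to have locally finite perimeter in $\Om$, and $u:=v|_\Om$ to be harmonic in $\Om\setminus K$. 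The interior Hölder bound $u\in\mathcal C^{0,2/(n+2)}_\loc(\Om)$ comes from a De Giorgi-type energy-decay iteration in which the harmonic replacement on a small ball is corrected so as to restore simultaneously the measures of $\{v=1\}$ and of $\{v>0\}$; weighing the price of these corrections against the energy gain yields exactly the exponent $\tfrac 2{n+2}$. In two dimensions, since $K$ may be perturbed freely subject only to $|K|=\om_n$, the associated multiplier forces $|\nabla u|$ to be constant along $\partial K\cap\Om$, so $1-u$ solves a one-phase Bernoulli free boundary problem and $\partial K\cap\Om$ is analytic by the Alt--Caffarelli theory together with elliptic bootstrap.

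For the dichotomy: if the minimizer has $|\Om|<M$, the volume constraint on $\Om$ is inactive, so the rearrangement/symmetrization argument proving Theorem \ref{bnnt04} (where $|\Om|$ is not constrained) applies and forces $(K,\Om)$ to be concentric balls — which, in the regime \eqref{HypT}, can only be the degenerate pair $K=\Om=B_1$; if instead $|\Om|=M$, only the structural and regularity statements above are claimed.

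Finally, for the last assertion assume $\Theta\in\mathcal C^1$ near $1$ with $\Theta'(1)^2<4(n-1)\Theta(1)$. Computing the radial minimizer on the annulus $B_R\setminus B_1$ as $R\downarrow 1$ gives the expansion $E_\Theta(B_1,B_R)=n\om_n\Theta(1)+n\om_n\big((n-1)\Theta(1)-\tfrac14\Theta'(1)^2\big)(R-1)+o(R-1)$, so under the hypothesis a thin radial layer of insulator strictly raises the energy above $E_\Theta(B_1,B_1)=n\om_n\Theta(1)$. To upgrade this to ``for $M-\om_n$ small the minimizer is $K=\Om=B_1$'', I would argue by contradiction: if along $M_j\downarrow\om_n$ the minimizers $(K_j,\Om_j)$ used insulator, then combining $E_\Theta(K_j,\Om_j)\le n\om_n\Theta(1)$ (the trivial competitor) with the isoperimetric lower bound $E_\Theta\ge\inf_{t}\big(\Theta(t)\,n\om_n+\text{Dirichlet}(t\!\to\!1\text{ over volume }M_j-\om_n)\big)$ forces $E_\Theta(K_j,\Om_j)\to n\om_n\Theta(1)$ with near-equality in the isoperimetric inequality, hence $(K_j,\Om_j)\to(B_1,B_1)$ quantitatively; a quantitative, non-spherical version of the expansion above — bounding the energy gain of a thin layer around a nearly round $K$ by the isoperimetric deficit minus a positive multiple of $\big(4(n-1)\Theta(1)-\Theta'(1)^2\big)$ times the layer volume — then contradicts $E_\Theta(K_j,\Om_j)\le n\om_n\Theta(1)$. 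This last quantitative stability estimate, alongside the density estimate behind \eqref{HypT}, are the two genuinely delicate points.
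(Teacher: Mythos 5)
Your plan for the existence and interior regularity broadly matches the paper's: relax to $SBV(\Rn)$, use the smallness condition \eqref{HypT} to get an a priori lower bound $u\geq\delta 1_{\{u>0\}}$ (preventing thin slabs of insulator), establish concentration/cut-off lemmas, apply $SBV$ compactness with lower semicontinuity, and obtain regularity from almost-quasiminimality of the Mumford--Shah functional and the harmonic replacement with volume-restoring flows (the exponent $2/(n+2)$ does indeed fall out of balancing $R=r^{n/(n+2)}$ against the Caccioppoli estimate). That portion is a reasonable sketch of the same strategy. Two points, however, are problematic.

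First, your dichotomy step contains a false claim: you assert that if $|\Om|<M$ the minimizer ``in the regime \eqref{HypT} can only be the degenerate pair $K=\Om=B_1$.'' That is not true and not what the theorem states. Condition \eqref{HypT} controls $M-\om_n$ relative to the integral $\int_0^1 t^{2n-1}\Theta(t)^{-n}\,\mathrm dt$; it says nothing about the monotonicity of $r\mapsto E_\Theta(B_1,B_r)$, which is what determines where the infimum over concentric balls sits. The paper's argument produces exactly the weaker (and correct) conclusion $E_\Theta(K,\Om)=\inf_{r\in[1,R]}E_\Theta(B_1,B_r)$ when $|\Om|<M$, by reflections around all hyperplanes bisecting $K$ combined with the observation that a minimizer of smallest volume must have all such hyperplanes also bisecting $\Om$; it is not a direct invocation of Theorem \ref{bnnt04} (constraint-inactive is not the same as being a penalized minimizer for some $\Lambda>0$).

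Second, for the final assertion your compactness-plus-quantitative-stability scheme is genuinely different from the paper and leaves the main difficulty unresolved. You explicitly flag the needed ingredient — a quantitative bound on the energy gain of a thin insulating layer around a ``nearly round'' $K$ in terms of the isoperimetric deficit — but such a stability estimate is not obviously available and would itself be a substantial result. The paper avoids this entirely: it first proves a second a priori truncation (Proposition~\ref{proposition_high_cutoff}) showing that when $M-\om_n$ is small one may assume $u\geq\delta$ with $\delta$ close to $1$, then writes
\[
\int_{\partial\Om}\Theta(u)\ds+\int_{\Om}\Theta'(u)|\nabla u|\dx=\Theta(\delta)\Per(\Om)+\int_\delta^1\Theta'(t)\Per(\{u>t\})\,\mathrm dt,
\]
applies the isoperimetric inequality level-by-level, and absorbs $\int_\Om\Theta'(u)|\nabla u|$ into $\int_\Om|\nabla u|^2$ via Young's inequality with excess $\tfrac14\|\Theta'\|_\infty^2(M-\om_n)$. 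Under $\Theta'(1)^2<4(n-1)\Theta(1)$ and $\delta$ close to $1$, the surplus from the isoperimetric step dominates, giving $\E_\Theta(u)\ge n\om_n\Theta(1)$ directly, with no limiting argument. So while your radial expansion and its sign are correct (and match the paper's converse remark), the route you propose to upgrade it has a real gap where the paper has an explicit, elementary estimate.
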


\begin{remark}\rm Notice that any homogenous functions $\Theta(u)= u^\alpha$, $\alpha >0$ satisfies the assumption \eqref{HypTheta}. Moreover,   if $\Theta (u) = O(u^2)$ then in \eqref{HypT} any value $M \in [\om_n, +\infty)$ is accepted. In the convection case $\Theta(u) =\beta u^2$ the full picture of the solutions is understood. Other interesting choices of $\Theta$ may be given by \eqref{bnnt100}, corresponding to thermal radiation, or $\Theta(u)=cu$ corresponding to a constant heat flux. Moreover, discontinuous functions $\Theta$, with $\Theta(0_+)>0$, are admissible. This means that one can consider functions like $\Theta(u)= c_1 1_{u>0}+c_2u^\alpha$ on $[0, 1]$ which models a cost of the highly insulating material  on $\partial (A\cup K)$, cost which is proportional to its surface measure. \newline

 The function $\Theta$ may be extended to $\R$ as a nondecreasing and nonnegative function, and the constraint  $1_K\leq v\leq 1$ a.e.   in \eqref{eq_optintro} may be relaxed into $v_{|K}\geq 1$ a.e. Whatever the extension of $\Theta$ is outside $[0,1]$,  the new problem is equivalent to the original one, by truncation below 0 and above 1.
\end{remark}
Clearly, we know many more things on the penalized problem \eqref{bnnt02} than on the constrained problem \eqref{bnnt01}, in particular that the solutions are always concentric balls. 
This stills leaves open the following question: 
\begin{center}
\textit{Under reasonable hypotheses on $\Theta$, is it true that the solution of \eqref{bnnt01} always consists  on two concentric balls?} 
\end{center}

The organisation of the paper is as follows.
\begin{itemize}[label=\textbullet]\setlength\itemsep{1em}
\item In  Section \ref{bnnt06} we discuss the convection case  and   we prove Theorem \ref{bnnt03}.  The proof  follows the strategy of  Bossel and Daners for the Faber-Krahn inequality for the Robin Laplacian and is based on the construction of a so called $H$-function. Up to knowledge of the authors, this is the only case (aside from Faber-Krahn) where this strategy works. However, we point out that this strategy is fully working only in dimension $2$, while for $n \ge 3$ it works only for $\beta > n-2$. This section is mostly independent from the rest of the paper, except for the case $n\ge 3$, $\beta \in (0, n-2]$ which is a consequence of the analysis by free discontinuity techniques, for which we refer to Corollary \ref{CorPenalized} and Remark \ref{bnnt20}.

\item Section \ref{bnnt07} is devoted to the analysis of the existence of relaxed solutions for the constrained problem \eqref{bnnt01} in the context of a general dissipation functions $\Theta$, and to the regularity of the free boundaries. 
 These results are rather technical and they prepare the   proofs of Theorems \ref{bnnt04} and  \ref{bnnt03.2}. The key idea is that once we know that problem \eqref{bnnt01} has a sufficiently smooth solution, we can extract qualitative information out of  its optimality. We work in the framework of free discontinuity problems, based on the analysis of special functions with bounded variation.
 
\item In Section  \ref{bnnt08} we prove Theorem \ref{bnnt04} and in Section  \ref{bnnt09} we prove Theorem \ref{bnnt03.2}.

\end{itemize}

\section{The convection case: proof of Theorem \ref{bnnt03}}\label{bnnt06}

In this section we consider  $\Theta(s)=\beta s^2$. In this case, the energy $E_\Theta$ is simply denoted by $E_\beta$ and takes the form
\begin{equation}\label{eq_Ebeta}
E_{\beta}(K,\Om):=\inf_{v\in H^1(\Om),v\geq 1_K}\int_{\Om}|\nabla v|^2\dx+\beta\int_{\partial\Om}v^2\ds.
\end{equation}

For fixed $K$, minimizers of the functional $\Om \to E_{\beta}(K,\Om)+\Lb |\Om|$ have been studied in \cite{BL14}, \cite{CK16} and \cite{Kr19}, in particular the existence of an optimal set $\Om$ and its regularity.

The claim of  Theorem \ref{bnnt03} is that the solution of the constrained problem \eqref{bnnt01} consists of two centered balls for every $M> \om_n$, and the size of the balls is then given by the study of the function
\[R\mapsto E_{\beta}(B_1,B_R).\]
Let us first describe this function in detail. We let 
\[\Phi_n(\rho)=\begin{cases}\log(\rho)&\text{ if }n=2\\ -\frac{1}{(n-2)\rho^{n-2}}&\text{ if }n\geq 3,\end{cases}\]
where the sign convention is taken such that $\Phi_n$ is always increasing. Relying on the expression of radial harmonic functions ($x\mapsto a+b\Phi_n(|x|)$) and the boundary conditions we obtain that the temperature associated to $(B_1,B_R)$ is given by
\[u(x)=1-\frac{\beta(\Phi_n(|x|)-\Phi_n(1))_+}{\Phi_n'(R)+\beta(\Phi_n(R)-\Phi_n(1))}\]
and
\[E_{\beta}(B_1,B_R)=\frac{\beta\Per(B_1)\Phi_n'(1)}{\Phi_n'(R)+\beta(\Phi_n(R)-\Phi_n(1))}.\]
Notice in particular that
\[\frac{d}{dR}E_{\beta}(B_1,B_R)\leq 0\text{ iff }\frac{d}{dR}(\Phi_n'(R)+\beta \Phi_n(R))\geq 0\text{ iff }R\geq \frac{n-1}{\beta}.\]
Moreover the extremal values are
\[E_{\beta}(B_1,B_1)=\beta n \om_n,\ \lim_{R\to\infty}E_{\beta}(B_1,B_R)=(n-2)n \om_n.\]
In two dimension there are two cases
\begin{itemize}[label=\textbullet]\setlength\itemsep{1em}
\item if $\beta\ge 1$ then $R\in [1,+\infty[\mapsto E_{\beta}(B_1,B_R)$ is decreasing.
\item if $ \beta<1$ then $R\in [1,+\infty[\mapsto E_{\beta}(B_1,B_R)$ increases on $[1,\beta^{-1}]$ and  decreases on $[\beta^{-1},+\infty[$, with the existence of a unique $R_{\beta}>\beta^{-1}$ such that $E_{\beta}(B_1,B_{R_{\beta}})=E_{\beta}(B_1,B_1)$.
\end{itemize}
In higher dimension there are three cases
\begin{itemize}[label=\textbullet]\setlength\itemsep{1em}
\item if $\beta \geq n-1 $ then $R\in [1,+\infty[\mapsto E_{\beta}(B_1,B_R)$ is decreasing.
\item if $ n-1 >\beta >n-2$ then $R\in [1,+\infty[\mapsto E_{\beta}(B_1,B_R)$ increases on $[1,\frac{n-1}{\beta}]$, decreases on $[\frac{n-1}{\beta},+\infty)$, with the existence of a unique $R_{\beta}>\frac{n-1}{\beta}$   such that $E_{\beta}(B_1,B_{R_{\beta}})=E_\beta(B_1,B_1)$.
\item if $\beta \leq n-2 $ then $R\in [1,+\infty[\mapsto E_{\beta}(B_1,B_R)$ reaches its minimum at $R=1$.
\end{itemize}

We postpone the analysis of the last case, and start by proving the result for the first two cases. For this we will need the following.
\begin{lemma}\label{bnnt13}
Let $R>1$, $\beta>0$, and let $u^*$ be the (unique) minimizer in \eqref{eq_Ebeta}  on $(B_1,B_R)$. Then $\frac{|\nabla u^*|}{u^*}\leq \beta$ on $B_R\setminus B_1$ if and only if
\[\forall \rho\in [1,R],\ E_{\beta}(B_1,B_\rho)\geq E_{\beta}(B_1,B_R).\]
\end{lemma}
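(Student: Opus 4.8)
The plan is to reduce both sides of the stated equivalence to one and the same elementary inequality for the scalar function
\[
g(\rho):=\Phi_n'(\rho)+\beta\Phi_n(\rho),
\]
exploiting that $u^*$ and $E_\beta(B_1,B_\rho)$ are already given explicitly in the discussion preceding the lemma.

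First I would compute the pointwise ratio $|\nabla u^*|/u^*$ on the annulus. Since $u^*$ is radial with $u^*(x)=1-\beta(\Phi_n(|x|)-\Phi_n(1))/(\Phi_n'(R)+\beta(\Phi_n(R)-\Phi_n(1)))$ and $\Phi_n'(\rho)=\rho^{1-n}>0$, for $|x|=\rho\in(1,R)$ one gets $|\nabla u^*(x)|=\beta\Phi_n'(\rho)/(\Phi_n'(R)+\beta(\Phi_n(R)-\Phi_n(1)))$, while the numerator of $u^*(x)$ simplifies to $\Phi_n'(R)+\beta(\Phi_n(R)-\Phi_n(\rho))$. The latter is strictly positive because $\Phi_n$ is increasing and $\Phi_n'(R)>0$; in particular $u^*>0$ on $B_R\setminus B_1$, so the ratio is well defined and
\[
\frac{|\nabla u^*(x)|}{u^*(x)}=\frac{\beta\Phi_n'(\rho)}{\Phi_n'(R)+\beta(\Phi_n(R)-\Phi_n(\rho))},\qquad \rho=|x|.
\]

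Next I would read off the two conditions from this identity. The denominator above equals $g(R)-g(\rho)+\Phi_n'(\rho)$, so clearing it (it is positive) shows that $|\nabla u^*(x)|/u^*(x)\le\beta$ at radius $\rho$ holds precisely when $g(\rho)\le g(R)$; since the ratio depends on $x$ only through $|x|$, the inequality holds on all of $B_R\setminus B_1$ iff $g(\rho)\le g(R)$ for every $\rho\in[1,R]$. On the other hand, from the recorded formula $E_{\beta}(B_1,B_\rho)=\beta\Per(B_1)\Phi_n'(1)/(\Phi_n'(\rho)+\beta(\Phi_n(\rho)-\Phi_n(1)))$ and the fact that $t\mapsto 1/t$ is decreasing on $(0,\infty)$, the inequality $E_{\beta}(B_1,B_\rho)\ge E_{\beta}(B_1,B_R)$ is equivalent to $\Phi_n'(\rho)+\beta(\Phi_n(\rho)-\Phi_n(1))\le\Phi_n'(R)+\beta(\Phi_n(R)-\Phi_n(1))$, i.e. again $g(\rho)\le g(R)$. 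Thus both statements in the lemma are equivalent to ``$g(\rho)\le g(R)$ for all $\rho\in[1,R]$'' (using continuity of $g$ to pass between $[1,R]$ and the open interval), hence to each other.

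There is no serious obstacle here; the work is bookkeeping. The points to watch are the sign conventions for $\Phi_n$ and $\Phi_n'$ (so that $\Phi_n$ is increasing and $\Phi_n'(\rho)=\rho^{1-n}>0$), the positivity of the denominator $\Phi_n'(R)+\beta(\Phi_n(R)-\Phi_n(\rho))$ so that $|\nabla u^*|/u^*$ is meaningful, and the observation that this ratio is a function of $|x|$ alone. For orientation one has $g'(\rho)=\rho^{-n}(\beta\rho-(n-1))$, so the requirement ``$\max_{[1,R]}g=g(R)$'' is exactly what produces the thresholds $\frac{n-1}{\beta}$ and $R_\beta$ appearing in the case discussion preceding the lemma; that refinement, however, plays no role in the equivalence itself.
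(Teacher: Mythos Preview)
Your proof is correct and follows exactly the same route as the paper: both reduce the pointwise inequality $|\nabla u^*|/u^*\le\beta$ and the energy inequality $E_\beta(B_1,B_\rho)\ge E_\beta(B_1,B_R)$ to the single condition $\Phi_n'(\rho)+\beta\Phi_n(\rho)\le \Phi_n'(R)+\beta\Phi_n(R)$ via the explicit formulas for $u^*$ and $E_\beta$. You have simply spelled out the ``straightforward computations'' that the paper leaves to the reader.
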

\begin{proof}
We remind the expression
\[u^*(x)=1-\frac{\beta(\Phi_n(|x|)-\Phi_n(1))}{\Phi_n'(R)+\beta(\Phi_n(R)-\Phi_n(1))},\]
so by straightforward computations
\[\frac{|\nabla u^*|}{u^*}\leq\beta\text{ iff }\Phi_n'(\rho)+\beta\Phi_n(\rho)\leq \Phi_n'(R)+\beta\Phi_n(R) \text{ iff }E_{\beta}(B_1,B_\rho)\geq E_{\beta}(B_1,B_R).\]
\end{proof}

We may now prove the result. The proof relies on the study of the so called $H$ function introduced by Bossel \cite{B88}, see also \cite{D06} and \cite{BG10}, for the study of the Faber-Krahn inequality involving the first eigenvalue of the Laplace operator with Robin boundary conditions.
\smallskip

\begin{proof} [Proof of Theorem \ref{bnnt03}]

Let $(K,\Om)$ be smooth sets such that $ K \sq \Om$, $|K|=\om_n$, $|\Om|\le M$ (and $R$ the radius defined by $M=|B_R|$).  Let $u$ be the minimizer of $E_\beta (K, \Om)$ and denote $\Om_t=\{u>t\}$. We decompose $\partial \Om_t$ into two disjoint (up to a $\Hs$-negligible set) sets; $\partial^i\Om_t=\{u=t\}\cap\Om$, and $\partial^e\Om_t=\partial\Om_t\cap\partial \Om$. Then  for a.e. $t \in [0, 1]$ 
\begin{align*}
0=\int_{\{t<u<1\}}\frac{\Delta u}{u}\dx&=\int_{\{t<u<1\}}\left(\nabla\cdot\left(\frac{\nabla u}{u}\right)-\nabla u\cdot\nabla\frac{1}{u}\right)\dx\\
&=\int_{\partial\{t<u<1\}}\nu_{\Om_t}\cdot\frac{\nabla u}{u}\ds+\int_{\Om_t}\frac{|\nabla u|^2}{u^2}\dx\\
&=\int_{\partial K\cap\Om}|\nabla u|\ds-\int_{\partial^i\Om_t}\frac{|\nabla u|}{u}\ds\\
&\ -\beta\Hs(\partial\{t<u<1\}\cap\partial\Om)+\int_{\Om_t}\frac{|\nabla u|^2}{u^2}\dx.\\
\end{align*}
Since
\begin{align*}
E_\beta(K,\Om)&=\int_{\Om\setminus K}\nabla\cdot (u\nabla u)\dx+\beta\int_{\partial\Om}u^2\ds\\
&=\int_{\partial K\cap\Om}|\nabla u|\ds+\int_{\partial\Om\setminus\partial K}\left(\beta u^2+u\partial_\nu u\right)\ds+\int_{\partial K\cap\partial\Om}\beta u^2\\
&=\int_{\partial K\cap\Om}|\nabla u|\ds+\beta\Hs(\partial K\cap\partial\Om)
\end{align*}
then injecting this in the previous equation we obtain
\[E_{\beta}(K,\Om)=\beta\Hs(\partial^e\Om_t)+\int_{\partial^i\Om_t}\frac{|\nabla u|}{u}\ds-\int_{\Om_t}\frac{|\nabla u|^2}{u^2}\dx.\]
Let us define, for all $t\in [0,1]$ and $\phi\geq 0$,
\[H(t,\phi)=\beta\Hs(\partial^e\Om_t)+\int_{\partial^i\Om_t}\phi\ds-\int_{\Om_t}\phi^2\dx.\]

\begin{lemma}\label{bnnt11}
For any nonnegative $L^\infty$ function $\phi$, there exists some $t\in ]0,1[$ for which $H(t,\phi)\leq E_{\beta}(K,\Om)$.
\end{lemma}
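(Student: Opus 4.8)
The plan is to integrate the identity $E_\beta(K,\Om)=H(t,|\nabla u|/u)$ against $t$ and compare term by term with $\int_0^1 H(t,\phi)\,dt$, exploiting the coarea formula. First I would observe that, by the computation just preceding the lemma, $E_\beta(K,\Om)=H(t,|\nabla u|/u)$ for a.e.\ $t\in(0,1)$; this is one fixed number. So it suffices to show that for \emph{some} $t$ one has $H(t,\phi)\le H(t,|\nabla u|/u)$. I would argue by contradiction: suppose $H(t,\phi)>H(t,|\nabla u|/u)=E_\beta(K,\Om)$ for a.e.\ $t\in(0,1)$, and integrate this strict inequality over $t\in(0,1)$ to get
\[
\int_0^1 H(t,\phi)\,dt > E_\beta(K,\Om).
\]
Now I would compute $\int_0^1 H(t,\phi)\,dt$ explicitly. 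The boundary term is handled by the layer–cake / coarea formula: $\int_0^1 \Hs(\partial^e\Om_t)\,dt=\int_0^1 \Hs(\{u>t\}\cap\partial\Om)\,dt=\int_{\partial\Om} u\,d\Hs$. The volume term gives $\int_0^1\big(\int_{\Om_t}\phi^2\,dx\big)\,dt=\int_{\Om} u\,\phi^2\,dx$ by Fubini. The interior term $\int_0^1\big(\int_{\partial^i\Om_t}\phi\,d\Hs\big)\,dt=\int_0^1\big(\int_{\{u=t\}\cap\Om}\phi\,d\Hs\big)\,dt=\int_{\Om\cap\{|\nabla u|>0\}}\phi\,|\nabla u|\,dx$ by the coarea formula (on the set where $u$ is not constant; on $\{u=1\}=K$ and on $\{u=0\}$ the level sets contribute nothing generic). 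Putting these together,
\[
\int_0^1 H(t,\phi)\,dt=\beta\int_{\partial\Om} u\,d\Hs+\int_{\Om}\phi\,|\nabla u|\,dx-\int_{\Om} u\,\phi^2\,dx.
\]

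Next I would bound the middle term pointwise by Young's inequality: $\phi\,|\nabla u|\le u\,\phi^2+\frac{|\nabla u|^2}{4u}$ wherever $u>0$, hence
\[
\int_0^1 H(t,\phi)\,dt\le \beta\int_{\partial\Om} u\,d\Hs+\frac14\int_{\Om}\frac{|\nabla u|^2}{u}\,dx.
\]
It then remains to check that the right-hand side is $\le E_\beta(K,\Om)$, which would contradict the integrated strict inequality above and finish the proof. For this I would use that $u$ is the minimizer, so it satisfies the Euler–Lagrange (Robin) condition $-\partial_\nu u=\beta u$ on $\partial\Om\cap\{u>0\}$ and is harmonic in $\Om\setminus K$ with $u\equiv1$ on $K$; then $\int_\Om \frac{|\nabla u|^2}{u}\,dx$ can be compared to $E_\beta(K,\Om)$ by testing the weak formulation with $\log u$ (equivalently, integrating $\mathrm{div}(\nabla u)=0$ against $1/u$ as was done in the displayed computation, but now on $\{0<u<1\}$ without truncating from below), which yields exactly $\int_{\{0<u<1\}}\frac{|\nabla u|^2}{u^2}\,u\,dx\le \beta\int_{\partial\Om}u\,d\Hs+\int_{\partial K\cap\Om}|\nabla u|\,d\Hs$; combined with the identity $E_\beta(K,\Om)=\int_{\partial K\cap\Om}|\nabla u|\,d\Hs+\beta\Hs(\partial K\cap\partial\Om)$ already established, this gives the desired inequality after a short manipulation of the boundary terms.

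The main obstacle I anticipate is the rigorous handling of the coarea/integration arguments at the degenerate level sets — in particular making sense of $\int_{\partial^i\Om_t}\frac{|\nabla u|}{u}\,d\Hs$ near $t=1$ (where $\{u=1\}\supseteq K$ has positive measure, so the coarea decomposition must be restricted to $\{0<u<1\}$) and near $t=0$ (where $u$ may vanish on a set of positive measure and the weight $1/u$ blows up). One must check that the critical values contribute nothing and that the Young-inequality step is only applied on $\{u>0\}$, with the portion $\{u=0\}$ handled separately (there $H(t,\phi)$ picks up only $\beta\Hs(\partial^e\Om_t)$ for the relevant range of $t$, which is controlled). A secondary technical point is justifying the use of $\log u$ (or $1/u$) as a test function despite its singularity, which is where the lower semicontinuity/regularity of the minimizer and a truncation-and-pass-to-the-limit argument enter. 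Once these measure-theoretic subtleties are dispatched, the inequality chain above is elementary.
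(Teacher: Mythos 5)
Your overall plan—show that some weighted average of $H(t,\phi)-E_\beta(K,\Omega)$ is nonpositive—is the right one, but the specific average you chose (the unweighted one) does not work, and the final chain of estimates you propose to close the argument is false. Carrying out your own coarea computations honestly, one finds
\[
\int_0^1 H(t,\phi)\,\mathrm{d}t=\beta\int_{\partial\Omega}u\,\mathrm{d}\mathscr{H}^{n-1}+\int_{\Omega}\phi\,|\nabla u|\,\mathrm{d}x-\int_{\Omega}u\,\phi^2\,\mathrm{d}x,
\]
while the identity $E_\beta(K,\Omega)=H(t,|\nabla u|/u)$ for a.e.\ $t$, integrated on $(0,1)$, collapses (the two $\phi$-terms cancel) to $E_\beta(K,\Omega)=\beta\int_{\partial\Omega}u\,\mathrm{d}\mathscr{H}^{n-1}$. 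Hence
\[
\int_0^1\bigl(H(t,\phi)-E_\beta(K,\Omega)\bigr)\,\mathrm{d}t=\int_{\Omega}\phi\bigl(|\nabla u|-u\phi\bigr)\,\mathrm{d}x,
\]
which is \emph{strictly positive} for any sufficiently small positive $\phi$. So the desired inequality $\int_0^1 H(t,\phi)\,\mathrm{d}t\le E_\beta(K,\Omega)$ is simply false in general, and your proof-by-contradiction cannot succeed. The step where this surfaces concretely in your write-up is the claim $\beta\int_{\partial\Omega}u\,\mathrm{d}\mathscr{H}^{n-1}+\tfrac14\int_\Omega|\nabla u|^2/u\,\mathrm{d}x\le E_\beta(K,\Omega)$: since $0\le u\le 1$ we have $\int_{\partial\Omega}u\ge\int_{\partial\Omega}u^2$ and $|\nabla u|^2/(4u)\ge\tfrac14|\nabla u|^2$, both inequalities going the wrong way, so there is no reason this should compare favorably to $E_\beta(K,\Omega)=\int_\Omega|\nabla u|^2+\beta\int_{\partial\Omega}u^2$.

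The missing idea is the weight $t$, which is exactly what the paper uses. Integrating $t\bigl(H(t,\phi)-E_\beta(K,\Omega)\bigr)$ over $(0,1)$ and running the same coarea/Fubini computations yields
\[
\int_0^1 t\bigl(H(t,\phi)-E_\beta(K,\Omega)\bigr)\,\mathrm{d}t=\int_\Omega u\phi|\nabla u|\,\mathrm{d}x-\tfrac12\int_\Omega u^2\phi^2\,\mathrm{d}x-\tfrac12\int_\Omega|\nabla u|^2\,\mathrm{d}x=-\tfrac12\int_\Omega\bigl(u\phi-|\nabla u|\bigr)^2\,\mathrm{d}x\le 0,
\]
so the weight makes the Young step close as a perfect square with no residue. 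The paper reaches the same conclusion in a slightly different packaging: it writes $H(t,\phi)-E_\beta(K,\Omega)\le -\tfrac1t\tfrac{d}{dt}\bigl(t^2\int_{\Omega_t}\tfrac{|\nabla u|}{u}w\,\mathrm{d}x\bigr)$ with $w=\phi-|\nabla u|/u$, and then integrates $t\cdot(\cdot)$ to get a telescoping total derivative that vanishes at the endpoints. Either route is fine, but the weight $t$ is essential and cannot be dropped.
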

\begin{proof}
Let $w=\phi-\frac{|\nabla u|}{u}$, then
\begin{align*}
H(t,\phi)-E_{\beta}(K,\Om)&=H(t,\phi)-H\left(t,\frac{|\nabla u|}{u}\right)\\
&=\int_{\partial^i\Om_t}w\ds+\int_{\Om_t}\left(\left(\frac{|\nabla u|}{u}\right)^2-\phi^2\right)\dx\\
&\leq \int_{\partial^i\Om_t}w\ds-2\int_{\Om_t}\frac{|\nabla u|}{u}w\dx\\
&=-\frac{1}{t}\frac{d}{dt}\left(t^{2}\int_{\Om_t}\frac{|\nabla u|}{u}w\dx\right),
\end{align*}
 where the last line is obtained by coarea formula on the level sets of $u$. So in particular
\[\int_{0}^{1}t(H(t,\phi)-E_{\beta}(K,\Om))dt\leq -\left[t^{2}\int_{\Om_t}\frac{|\nabla u|}{u}w\dx\right]_{t=0}^{t=1}=0,\]
which proves the lemma.
\end{proof}

\noindent {\it Proof of Theorem \ref{bnnt03}, (continuation).} Recall that $u^*$ is the solution on  $(B_1,B_R)$, which is radially symmetric and decreasing. Let $\phi$ be the dearrangement of $\frac{|\nabla u^*|}{u^*}$ on $\Om$ following  the level sets of $u$. To be more precise, for any $x\in\Om\setminus K$, let $r(x)>0$ be defined by the formula
\[|B_{r(x)}|=|\{u>u(x)\}|,\]
then we let $\phi(x)$ be the value of $\frac{|\nabla u^*|}{u^*}$ on $\partial B_{r(x)}$.\bigbreak

Let $t$ be chosen as in  Lemma \ref{bnnt11}, so that $H(t,\phi)\leq E_{\beta}(K,\Om)$. Assuming that $\frac{|\nabla u^*|}{u^*}\leq\beta$ on $\partial B_{r(x)}$, we have
\begin{align*}
H(t,\phi)&= \beta\Hs(\partial^e\Om_t)+\int_{\partial^i\Om_t}\phi\ds-\int_{\Om_t}\phi^2\dx\\
&\geq \int_{\partial \Om_t} \left(\frac{|\nabla u^*|}{u^*}\right)_{|\partial B_{r(x)}}\ds-\int_{\Om^*_t}\frac{|\nabla u^*|^2}{(u^*)^2}\dx&\\
&\geq \int_{\partial \Om_t^*} \frac{|\nabla u^*|}{u^*}\ds-\int_{\Om^*_t}\frac{|\nabla u^*|^2}{(u^*)^2}\dx&
\\
&=H^*\left(u^*_{|\partial B_{r(x)}},|\nabla u^*|/u^*\right)&\\
&=E_{\beta}(B_1,B_R)
\end{align*}
The inequalities above rely on the rearrangement properties, the isoperimetric inequality and on the hypothesis $\beta\geq \frac{|\nabla u^*|}{u^*}$.

We conclude by discussing when the assumption $\frac{|\nabla u^*|}{u^*}\leq \beta$ is verified. If $\beta\geq  n-1 $ the inequality $\frac{|\nabla u^*|}{u^*}\leq\beta$ is verified since $R \mapsto E_{\beta}(B_1,B_R)$ is decreasing on $[1, +\infty)$, by Lemma \ref{bnnt13}.\bigbreak
If instead $ n-2  <\beta< n-1 $, two situations occur. If $ M\ge |B_{R(n,\beta)}|$, then Lemma \ref{bnnt13} still works and the previous computation applies, with the same result. If $M\in [\om_n, |B_{R(n,\beta)}|]$ then we may consider $u^*$ the solution on $(B_1,B_{R(n,\beta)})$ restricted to $B_{R}$; it verifies $\frac{|\nabla u^*|}{u^*}\leq\beta$ and the same argument applies. We obtain that

$$E_\beta (K, \Om) \ge E_\beta (B_1, B_{R(n,\beta)}).$$
Since $ E_\beta (B_1, B_{R(n,\beta)})=  E_\beta (B_1, B_1)$, we conclude with the minimality of the couple $(B_1,B_1)$.

\bigbreak
The case $n \ge 3$, $\beta \leq  n-2 $ can not be treated directly with the $H$ function, but is a direct consequence of Corollary \ref{CorPenalized}. Indeed, it is enough to take  $\Lambda\to 0$ in Corollary \ref{CorPenalized}, since $ E_{\beta}(B_1,B_R)\geq E_{\beta}(B_1,B_1)$ for all $R\geq 1$ in this case.

\end{proof}
\begin{remark}\rm
Finally we have the following picture for problem \eqref{bnnt01} with $M= R^n \om_n$; the minimizer is $(B_1,B_r)$, where $r$ is defined through the following case disjunction.
\begin{itemize}[label=\textbullet]\setlength\itemsep{1em}
\item [(a)] If $n-1 \leq \beta $ then $r=R$. 
\item [(b)]  If $ n-2 < \beta< n-1 $, then defining $R(n,\beta)>\frac{n-1}{\beta}$ as the unique non-trivial solution of the equation $E_{\Theta}\left(B_1,B_{R(n,\beta)}\right)=n\beta \om_n$ we have
\begin{itemize}
\item [$ \bullet$] if $R(n,\beta)> R\geq 1$ then  $r=1$ 
\item [$ \bullet$] if $R=R(n,\beta) $ then  $r=1$ or $r=R$
\item  [$\bullet$] if $R> R(n,\beta)$ then $r=R$
\end{itemize}

\item [(c)]  If $\beta\leq  n-2 $ then $r=1$.
\end{itemize}
\end{remark}

\section{Preparatory results: existence and regularity of relaxed solutions}\label{bnnt07}

We analyze in this section the existence of a solution of problem \eqref{bnnt01} for general $\Theta$. Precisely, our purpose is to prove the following.
\begin{proposition}\label{ThExistence}
Let $M>\om_n$ and  $\Theta$ admissible, such that, for some dimensional constant $c_n$ (that will be specified)
\begin{equation}\label{HypT2}
M<\om_n+c_n\left(\inf_{0<s<1}\frac{\Theta(s/3)}{\Theta(s)}\right)^{2n}\int_{0}^{1}\frac{t^{2n-1}\mathrm{d}t}{\Theta(t)^{n}},
\end{equation}
Then problem \eqref{bnnt01} has a (sufficiently regular) solution $(K,\Om)$.
\end{proposition}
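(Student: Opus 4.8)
The plan is to recast problem \eqref{bnnt01} as a free discontinuity problem, produce a relaxed minimiser by the direct method in $SBV$, and then use regularity theory to upgrade it to a genuine admissible pair. To an admissible $(K,\Om)$ with optimal temperature $v$ one associates $v\in SBV(\Rn;[0,1])$ by setting $v\equiv 0$ on $\Rn\setminus\Om$; then $S_v$ lies in the essential boundary of $\Om$, one trace of $v$ on $S_v$ vanishes, and, writing $v^\vee$ for the larger trace,
\[\mathcal F(v):=\int_{\Rn}|\nabla v|^2\,\dx+\int_{S_v}\Theta(v^\vee)\,\ds= E_\Theta(K,\Om).\]
One then minimises $\mathcal F$ over $\mathcal A:=\{v\in SBV(\Rn;[0,1]):\ |\{v=1\}|\ge\om_n,\ |\{v>0\}|\le M\}$, so that $\inf_{\mathcal A}\mathcal F\le\inf\eqref{bnnt01}$, and (taking $K=\Om=B_1$) $\inf_{\mathcal A}\mathcal F\le\Theta(1)\,n\om_n$; the two infima will be shown to coincide and be attained.

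The heart of the matter is an a priori estimate on minimising sequences $(v_k)\subset\mathcal A$. The Dirichlet bound $\int|\nabla v_k|^2\le C$ is immediate, but the surface bound $\int_{S_{v_k}}\Theta(v_k^\vee)\le C$ is \emph{degenerate}, because $\Theta$ may be very flat near $0$, and upgrading it to a genuine perimeter bound $\Hs(S_{v_k})\le C$ is exactly where hypotheses \eqref{HypTheta} and \eqref{HypT2} enter. The mechanism is that $v_k$ must pass from the value $1$, attained on a set of measure $\ge\om_n$, down to $0$, while the transition region $\{0<v_k<1\}$ has measure at most $M-\om_n$, which is \emph{small} by \eqref{HypT2}. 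Decomposing this region along the triadic levels $3^{-j}$, using the Dirichlet energy to control the thickness of each level shell and the inequality $\Theta(3^{-j-1})\ge\big(\inf_{0<s<1}\Theta(s/3)/\Theta(s)\big)\,\Theta(3^{-j})$ from \eqref{HypTheta} to prevent the surface cost from degenerating faster than the geometry allows, one sums up to the required bound; the dimensional constant $c_n$ in \eqref{HypT2} is precisely the value making this estimate non-vacuous. A surgery/concentration argument — discarding the components of $\{v_k>0\}$ on which the constraint $v_k=1$ is void, and using density estimates at the free boundary together with the perimeter bound — then confines $\mathrm{supp}\,v_k$ to a fixed ball $B_{R_0}$. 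This step is the main obstacle; the rest is comparatively routine.

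With $\int|\nabla v_k|^2$ and $\Hs(S_{v_k})$ bounded and the supports in $B_{R_0}$, Ambrosio's $SBV$-compactness theorem gives, along a subsequence, $v_k\to v^\star$ in $L^1$, $\nabla v_k\rightharpoonup\nabla v^\star$ in $L^2$, and $v^\star\in SBV(\Rn;[0,1])$; the constraints $|\{v^\star=1\}|\ge\om_n$ and $|\{v^\star>0\}|\le M$ pass to the limit by Fatou-type arguments, so $v^\star\in\mathcal A$. Weak $L^2$ lower semicontinuity handles the Dirichlet term, and the surface term is lower semicontinuous along such sequences: the integrand $(a,b)\mapsto\Theta(\max(a,b))$ satisfies the triangle inequality thanks to the monotonicity of $\Theta$, and the degeneracy $\Theta(0)=0$ is absorbed by truncating at levels $\delta\downarrow 0$. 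Hence $\mathcal F(v^\star)\le\liminf_k\mathcal F(v_k)=\inf_{\mathcal A}\mathcal F$, so $v^\star$ minimises $\mathcal F$.

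Finally, minimality makes $v^\star$ an almost-minimiser of a Mumford–Shah type functional, and the regularity theory for such problems (De Giorgi–Carriero–Leaci, in the form adapted to boundary-type surface energies as in \cite{BL14,CK16,Kr19}) yields that $S_{v^\star}$ is essentially closed and rectifiable with $\Hs(S_{v^\star})<\infty$, that $v^\star$ has no interior jumps, and that $v^\star$ is locally Hölder continuous on $\Rn\setminus\overline{S_{v^\star}}$. Setting $\Om^\star=\{v^\star>0\}$ (now genuinely open) and choosing any relatively closed $K^\star\subseteq\{v^\star=1\}$ with $|K^\star|=\om_n$, we obtain an admissible pair for \eqref{bnnt01} with $E_\Theta(K^\star,\Om^\star)\le\mathcal F(v^\star)=\inf_{\mathcal A}\mathcal F\le\inf\eqref{bnnt01}$. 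Therefore equality holds throughout and $(K^\star,\Om^\star)$ is a (sufficiently regular) solution of \eqref{bnnt01}.
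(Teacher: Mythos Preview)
Your overall strategy coincides with the paper's: relax to $SBV$, use hypotheses \eqref{HypTheta}--\eqref{HypT2} to obtain a truncation level $\delta>0$ (whence a genuine perimeter bound $\Theta(\delta)\Hs(J_{v_k})\le\E_\Theta(v_k)$), establish compactness of a minimising sequence, and then upgrade the relaxed minimiser via Mumford--Shah type regularity. The triadic decomposition you describe is exactly the content of Lemma~\ref{apriori}.

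There is, however, a genuine gap in the confinement step. The mechanism you propose---discarding components of $\{v_k>0\}$ on which $\{v_k=1\}$ has zero measure, combined with ``density estimates at the free boundary''---does not achieve confinement of the support in a fixed ball. First, density estimates are a consequence of \emph{minimality} (they come from comparing with local competitors, as in the paper's almost-quasiminimiser argument) and are not available for an arbitrary minimising sequence. Second, even after discarding void components, the remaining components may each carry a tiny positive amount of $\{v_k=1\}$ and still be spread arbitrarily far apart; nothing you wrote prevents this. The paper needs substantially more machinery here: a concentration lemma (Lemma~\ref{nucl}) showing that, once $v_k\ge\delta$ on its support, all but $\eps$ of the support lies in a \emph{bounded number} (depending on $\eps$) of unit cubes; a cut-off lemma (Lemma~\ref{trunc}) showing one can truncate outside a neighbourhood of these cubes while \emph{decreasing} energy by a controlled amount; and then, crucially, a dichotomy (``loose'' versus ``saturated'') in which, in the saturated case, the mass of $\{v_k=1\}$ lost by truncation is restored by composing with a compactly supported diffeomorphism built from Lemma~\ref{lem_volumeexchange}. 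Your ``Fatou-type arguments'' for the constraints are correct \emph{once} the sequence is confined to a fixed ball, but getting there while respecting both volume constraints simultaneously is precisely where the real work lies, and it is not covered by what you wrote.

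A minor point: your relaxed surface term $\int_{S_v}\Theta(v^\vee)\,\ds$ differs from the paper's $\int_{J_u}(\Theta(u_+)+\Theta(u_-))\,\ds$ on ``cracks'' (jumps with both traces positive). Your integrand $\phi(a,b)=\Theta(\max(a,b))$ is indeed subadditive, but it does not vanish on the diagonal, so the standard $SBV$ lower semicontinuity theorems (e.g.\ those in \cite{B98}) do not apply to it directly; the paper's symmetric choice $\Theta(a)+\Theta(b)$ avoids this issue and is what is actually used in Proposition~\ref{th_sbv_compactness}.
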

The word "sufficiently" above will be described later, and refers to the regularity that we need in the proofs of Theorems \ref{bnnt04} and \ref{bnnt03.2}. The proof of this proposition relies on  ideas inspired from the the relaxation strategy in the $SBV$ framework (defined below) as introduced in \cite{BG10} and from  \cite[Chapter 29]{M12} on the existence proof of minimal clusters for the perimeter. The main difficulty in our case comes from both the generality of the function $\Theta$ and from the fact that two measure constraints have to be satisfied simultaneously. \newline

As mentioned previously, the functional space used for our relaxation procedure is the $SBV$ space, on which more information may be found in the books \cite{B98,AFP00}. In few words, for an open set $\Om\subset\Rn$, $SBV(\Om)$ is defined as the set of functions $u\in L^1(\Om)$ such that $Du$ (the differential of $u$ in the sense of distribution) is a Radon measure that decomposes as
\[Du=\nabla u\mathscr{L}^n+(u_+-u_-)\nu_u\mathscr{H}^{n-1}\lfloor J_u,\]
where $\nabla u\in L^1(\Om)$ and $J_u$ is the set of jump points, meaning points $x\in \Rn$ such that $y\mapsto u(x+ry)$ converges in $L^1_\text{loc}(\Rn)$ as $r$ goes to $0$ to
\[u_+ 1_{y:y\cdot\nu_u >0}+u_- 1_{y:y\cdot\nu_u<0}\]
for some $u_+\neq u_-\in\R$, $\nu_u\in\mathbb{S}^{n-1}$. The set $J_u$ turns out to be rectifiable. We will make use of the following compactness result, which can be obtained by combination of \cite[Th. 2.3]{B98} and \cite[Th. 2.12]{B98} applied to $\phi(z,w)=\Theta(z)+\Theta(w)$.
\begin{proposition}\label{th_sbv_compactness}
Let $(u_i)_{i\in\mathbb{N}}$ be a sequence in $SBV_{\text{loc}}(\Rn)$ with values in $[0,1]$ such that for any $R>0$,
\[\sup_{i\in\mathbb{N}}\left(\int_{B_R}|\nabla u_i|^2+\Hs(J_{u_i}\cap B_R)\right)<\infty\]
then up to the extraction of a subsequence, there is a function $u\in SBV_\text{loc}(\R^n)$ such that
\begin{align*}
u_i&\underset{a.e.}{\longrightarrow}u\\
\nabla u_i&\underset{L^2_\text{loc}}{\rightharpoonup}\nabla u\\
\forall R>0,\ \int_{J_u\cap B_R}\left[\Theta(u_+)+\Theta(u_-)\right]d\Hs&\leq\liminf_{i\to\infty}\int_{J_{u_i}\cap B_R}\left[\Theta((u_i)_+)+\Theta((u_i)_-)\right]d\Hs
\end{align*}
\end{proposition}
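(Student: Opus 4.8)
\textbf{Proof proposal for Proposition \ref{th_sbv_compactness}.}

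The plan is to deduce this directly from the two cited results of \cite{B98}, so the work is essentially bookkeeping: checking that the hypotheses of the abstract $SBV$ compactness and lower semicontinuity theorems are met by the sequence $(u_i)$ and by the integrand $\phi(z,w)=\Theta(z)+\Theta(w)$. First I would recall the setting of \cite[Th. 2.3]{B98}: given a sequence in $SBV_\loc$ that is bounded in $L^\infty$ (here, valued in $[0,1]$), whose approximate gradients are bounded in $L^2_\loc$, and whose jump sets have $\Hs$-measure bounded on compact sets, one extracts a subsequence converging a.e.\ to some $u\in SBV_\loc(\Rn)$, with $\nabla u_i\rightharpoonup\nabla u$ weakly in $L^2_\loc$. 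The equi-boundedness assumptions of \cite[Th. 2.3]{B98} are exactly what our hypothesis $\sup_i(\int_{B_R}|\nabla u_i|^2+\Hs(J_{u_i}\cap B_R))<\infty$ provides, uniformly in $R$; a standard diagonal extraction over $R=1,2,\dots$ then yields a single subsequence working on all of $\Rn$ and a limit $u\in SBV_\loc(\Rn)$ still valued in $[0,1]$. This gives the first two displayed convergences.

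Next I would address the jump-part inequality via \cite[Th. 2.12]{B98}, which is a lower semicontinuity statement for surface energies of the form $\int_{J_u}\phi(u_+,u_-)\,d\Hs$ along $SBV$ sequences converging a.e.\ with gradients bounded in $L^2_\loc$. The point to verify is that $\phi(z,w)=\Theta(z)+\Theta(w)$ satisfies the structural hypotheses required there — typically that $\phi$ is symmetric (clear), lower semicontinuous (which holds since $\Theta$ is lower semicontinuous by assumption), and subadditive in the appropriate sense so that the surface energy is $BV$-elliptic / admissible for the theorem. Here the separable additive form $\Theta(z)+\Theta(w)$ with $\Theta$ nondecreasing and $\Theta(0)=0$ makes the subadditivity trivial: along an internal jump where a value $s$ splits as $s\to s'\to s''\to\dots$ one has monotonicity controlling the total, so no extra energy can be hidden. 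Applying \cite[Th. 2.12]{B98} on each ball $B_R$ to the (already extracted) subsequence then gives, for each fixed $R$,
\[
\int_{J_u\cap B_R}\big[\Theta(u_+)+\Theta(u_-)\big]\,d\Hs\leq\liminf_{i\to\infty}\int_{J_{u_i}\cap B_R}\big[\Theta((u_i)_+)+\Theta((u_i)_-)\big]\,d\Hs,
\]
which is the third display. (If \cite[Th. 2.12]{B98} is stated on bounded open sets rather than on $\Rn$, one simply applies it with $\Om=B_R$; the a.e.\ convergence and the $L^2_\loc$ gradient bound restrict without loss.)

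The only genuinely delicate point — and the one I would be most careful about — is matching the precise hypotheses of \cite[Th. 2.12]{B98} to our integrand, since lower semicontinuity of surface integrands in $SBV$ generally requires a $BV$-ellipticity or joint convexity/subadditivity condition on $\phi(u_+,u_-)$ that is not automatic for arbitrary lower semicontinuous $\phi$. I expect that for the additively separable, monotone, lower semicontinuous choice $\phi(z,w)=\Theta(z)+\Theta(w)$ this condition reduces to the monotonicity of $\Theta$ together with $\Theta(0)=0$, so the verification is short; but it is exactly where the proof has content rather than being pure citation. Everything else — the diagonal extraction in $R$, the fact that the $[0,1]$-valued property passes to the a.e.\ limit, the weak $L^2_\loc$ convergence of gradients — is routine and can be dispatched in a line each.
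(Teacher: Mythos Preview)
Your proposal is correct and follows exactly the approach the paper itself indicates: the paper does not give a detailed proof but simply states that the result ``can be obtained by combination of \cite[Th.~2.3]{B98} and \cite[Th.~2.12]{B98} applied to $\phi(z,w)=\Theta(z)+\Theta(w)$,'' which is precisely the two-step citation you spell out. Your added care about the diagonal extraction in $R$ and about checking the structural hypotheses on $\phi$ for \cite[Th.~2.12]{B98} is appropriate bookkeeping that the paper leaves implicit.
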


Here is the main strategy.
\begin{itemize}[label=\textbullet]\setlength\itemsep{1em}
\item [\bf Step 1.] (Relaxation) We change the problem into finding a minimizer for the following problem

\begin{equation}\label{bnnt01.r}
\inf_{u \in SBV (\R^n), |\{u= 1\}| \ge \om_n, |\{u>0\} |\le M   }\E_{\Theta}(u),
\end{equation}
 where 
\begin{equation}\label{eq_relaxedenergy}
\E_{\Theta}(u)=\int_{\Rn}|\nabla u|^2\dx+\int_{J_u}(\Theta(u_+)+\Theta(u_-))\ds.
\end{equation}

This means that we will need afterward to prove that such a relaxed minimizer (meaning a minimizer of the relaxed problem) corresponds to a classical minimizer.
\item [\bf  Step 2.] (A priori regularity) We prove that we may restrict to considering functions that verify an a priori estimate of the form $u\geq \delta 1_{\{u>0\}}$ for some explicit $\delta$; for Robin boundary conditions this kind of estimate is a cornerstone of the regularity theory. This is the only place where we use the hypothesis \eqref{HypT2}. Similar estimates may be found in \cite{BL14}, \cite{BG15} and \cite{CK16} (see also \cite{BBC08} for a related problem). Moreover, we prove a concentration lemma that says all the support of $u$ - up to a set of measure $\eps$ - is contained in a union of $C\eps^{-n}$ unit cubes for some explicit constant $C$, and we prove a cut-off lemma that says that one may replace $u$ by $0$ outside a large enough cube and lower its energy by a controlled amount.
\item [\bf Step 3.] (Existence of a relaxed solution) We rely on Step 2 to prove the existence of a minimizer. This is done by the direct method,  considering a minimizing sequence, applying the concentration lemma a first time to translate the minimizing sequence such that it converges to a non-trivial solution, and then applying again the concentration lemma and cut-off lemma for an appropriately small $\eps$.
\item [\bf Step 4.]  (Regularity of the relaxed solution) Finally we prove the regularity of such a relaxed minimizer by using the theory of almost quasiminimizers of the Mumford-Shah functionnal to handle the support of $u$, and the theory of the regularity of Alt-Caffarelli problem inside $\Om$.
\end{itemize}

As mentioned above, we are first interested in the relaxed problem \eqref{bnnt01.r}. Notice that the generalized energy \eqref{eq_relaxedenergy} takes into account ``cracks'', meaning part of the jump set where $u$ may be nonzero on both sides. The general energy \eqref{eq_relaxedenergy} may also be used to define $E_\Theta(K,\Om)$ in a more general setting ; for any open set $\Om$, and any measurable $K\subset\Om$, we let
\[E_\Theta(K,\Om)=\inf\left\{\E_\Theta(u),\ u\in SBV(\Rn,[0,1]):|K\setminus \{u=1\}|=|\{u\neq 0\}\setminus\Om|=\Hs(J_u\setminus\partial\Om)=0\right\}\]
When $\Om$ is a smooth open set, this is coherent with the first definition.

\begin{lemma}\label{apriori}
Assume that \eqref{HypT2} is verified. There is a constant $\delta=\delta_{n,\Theta,M}>0$ such that for any admissible $u $ in \eqref{bnnt01.r}, there is some $t>\delta$ verifying $\E_{\Theta}(u1_{\{u>t\}})\leq \E_{\Theta}(u)$.
\end{lemma}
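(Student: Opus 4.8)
\textbf{Proof proposal for Lemma \ref{apriori}.}

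The plan is to compare $u$ with its truncations $u1_{\{u>t\}}$ and to extract a good threshold $t$ by an integral (``layer-cake'') argument, exactly as in the treatment of Robin-type free boundary problems (cf. \cite{BL14}, \cite{BG15}, \cite{CK16}). First I would fix an admissible $u$ and, for $t\in(0,1)$, set $u_t:=u1_{\{u>t\}}\in SBV(\Rn,[0,1])$. Writing $A_t:=\{u>t\}$, one has $\nabla u_t=\nabla u\,1_{A_t}$, and the jump set of $u_t$ is contained in $J_u\cup\partial^*A_t$: on $J_u\cap A_t$ the jump of $u_t$ coincides with that of $u$, on $J_u\cap\{u<t\}$ it disappears, and on the reduced boundary $\partial^*A_t=\{u^+>t\ge u^-\}$ (the new ``trace'' level set coming from the truncation) $u_t$ jumps from a value $\le 1$ down to $0$, contributing at most $\Theta(1)\,\Hs(\partial^*A_t\cap\{u^-<t<u^+\})$; but for a.e.\ $t$ the set $\{u^+>t>u^-\}$ is $\Hs$-negligible on $J_u$ and $\{u>t\}$ has finite perimeter with $\partial^*A_t\subset\{\tilde u=t\}$ up to $\Hs$-null sets, so the genuinely new contribution is on the level set $\{u=t\}\setminus J_u$. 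Hence, for a.e.\ $t$,
\begin{equation}\label{eq:truncest}
\E_\Theta(u_t)\le \E_\Theta(u)-\int_{A_t^c\cap\{u>0\}}|\nabla u|^2\dx-\int_{J_u\cap\{u^+\le t\}}(\Theta(u^+)+\Theta(u^-))\ds+\Theta(t)\,\Hs(\{u=t\}).
\end{equation}
So it suffices to find $t>\delta$ for which $\Theta(t)\,\Hs(\{u=t\})$ is controlled by the (nonnegative) gain terms we threw away, or more simply for which the right-hand side is $\le\E_\Theta(u)$, i.e.
\begin{equation}\label{eq:goal}
\Theta(t)\,\Hs(\{u=t\})\le \int_{\{u=t\}\cap\{u^+=u^-=t\}^c}\dots
\end{equation}
— in practice I would just need $\Theta(t)\,\Hs(\{u=t\})$ small compared with what is available below level $t$.

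The heart of the matter is a \emph{coarea + isoperimetric} estimate bounding $\Hs(\{u=t\})$ from below against the measure of the sublevel set. For a.e.\ $t$, $\Hs(\{u=t\}\setminus J_u)\ge \Hs(\partial^*\{u>t\})\ge n\om_n^{1/n}|\{u>t\}|^{(n-1)/n}$ \emph{is the wrong direction}; what I actually want is the opposite, so instead I would argue by contradiction: suppose no such $t\in(\delta,\tfrac13)$, say, exists, i.e.\ for a.e.\ $t\in(\delta,\tfrac13)$ the gain in \eqref{eq:truncest} is strictly negative, which forces
\[
\Theta(t)\,\Hs(\{u=t\})\ \ge\ \text{(gain terms)}\ \ge\ \text{something}.
\]
Combining this with the coarea formula $\int_0^1 \Hs(\{u=t\})\,dt = \int_{\Rn}|\nabla u|\dx \le |\{u>0\}|^{1/2}\big(\int|\nabla u|^2\big)^{1/2}$ and with the isoperimetric inequality applied to $\{u>t\}$, together with hypothesis \eqref{HypTheta} (which lets me replace $\Theta(t/3)$ by a fixed fraction of $\Theta(t)$ — this is where the factor $s/3$ in \eqref{HypTheta} and the cube side $3$ enter), I would get a differential inequality for $m(t):=|\{u>t\}|$ of the form $-m'(t)\ge c_n\,\Theta(t)^{-? }\,m(t)^{(n-1)/n}$ or rather, integrating, a lower bound on $|\{u>0\}|-|\{u=1\}|$, i.e.\ on $|\Om|-\om_n$, of the shape
\[
M-\om_n\ \ge\ |\{u>0\}|-|\{u=1\}|\ \ge\ c_n\Big(\inf_{0<s<1}\tfrac{\Theta(s/3)}{\Theta(s)}\Big)^{2n}\int_0^1\frac{t^{2n-1}\,dt}{\Theta(t)^n},
\]
which contradicts \eqref{HypT2}. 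Thus for some $t>\delta$ the truncation does not increase the energy.

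The main obstacle I anticipate is \emph{bookkeeping the jump set of the truncation correctly} and making the contradiction argument quantitatively tight enough to land on the exact exponents in \eqref{HypT2}: one must be careful that truncating at level $t$ creates a new piece of interface (the level set $\{u=t\}$) carrying cost $\Theta(t)$ per unit area while removing jump contributions of size at least $\Theta(u^-)$ on $J_u\cap\{u\le t\}$ and gradient energy $\int_{\{u<t\}}|\nabla u|^2$, and only the balance of these — via Cauchy–Schwarz to pass from $|\nabla u|$ to $|\nabla u|^2$, costing one power of $|\{u>0\}|$, and via the isoperimetric inequality on each sublevel set, costing the factor $|\{u>t\}|^{(n-1)/n}$, iterated/integrated in $t$ to produce the power $2n$ on the $\Theta$-ratio and the weight $t^{2n-1}/\Theta(t)^n$ — yields the stated threshold. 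The role of \eqref{HypTheta} is precisely to convert bounds involving $\Theta$ evaluated at the ``shrunk'' level $t/3$ (arising from a dyadic/triadic decomposition of the range $(0,1)$ into layers, needed because $\Theta$ is merely monotone and possibly discontinuous) into bounds involving $\Theta(t)$ up to the displayed infimum constant; getting this conversion to interact cleanly with the coarea integration is the delicate point, but it is routine once the layer structure is set up.
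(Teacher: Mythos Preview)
Your outline captures the right ingredients (coarea, isoperimetric inequality, Cauchy--Schwarz, a triadic layer decomposition, contradiction), but it misses the mechanism that actually produces the exponents in \eqref{HypT2}, and as written I do not see how to close the argument using only the competitors $u1_{\{u>t\}}$.

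The paper does \emph{not} compare $u$ with the simple truncation $u1_{\{u>t\}}$. It compares $u$ with the \emph{slice removal} $u1_{\{\eta-t<u<\eta+t\}^c}$, i.e.\ it deletes a thin band around a level $\eta$ and keeps both what is above and what is below. The resulting inequality (your contradiction hypothesis) is then \emph{local in the range of $u$}: it bounds $\int_{\Om(\eta-t,\eta+t)}|\nabla u|^2$ and the jump length $\gamma(\eta-t,\eta+t)$ in the band by $\Theta(\tfrac32\eta)\bigl(h(\eta-t)+h(\eta+t)\bigr)$. This locality is exactly what allows one to set up two ordinary differential inequalities in the thickness parameter $t$: one for $f_\eta(t)=\int_{\eta-t}^{\eta+t}h$, yielding a \emph{lower} bound on $\int_{\Om(\eta/2,3\eta/2)}|\nabla u|$ of order $\eta^{2n-1}\Theta(\tfrac32\eta)^{-(2n-1)}\Theta(\tfrac12\eta)^n$, and one for $G_\eta(t)=\int_0^t\int_{\Om(\eta-s,\eta+s)}|\nabla u|^2$, yielding an \emph{upper} bound on $\int_{\Om(\eta-t,\eta+t)}|\nabla u|^2$ of order $\Theta(\tfrac32\eta)^2 t^{-2}|\Om(\tfrac12\eta,\tfrac32\eta)|$. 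Combining these through Cauchy--Schwarz forces $|\Om(\tfrac12\eta,\tfrac32\eta)|\ge c_n\,\eta^{2n}\Theta(\tfrac12\eta)^n/\Theta(\tfrac32\eta)^{2n}$, and summing over $\eta=2\cdot3^{-k}$ gives precisely the integral $\int_0^1 t^{2n-1}\Theta(t)^{-n}\,dt$ and the factor $\bigl(\inf\Theta(s/3)/\Theta(s)\bigr)^{2n}$.

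By contrast, your competitor $u1_{\{u>t\}}$ produces the \emph{cumulative} inequality $\Theta(t)h(t)>\int_{\{0<u<t\}}|\nabla u|^2+\ldots$, in which the right-hand side involves everything below level $t$. This is monotone in $t$ but not differentiable into a clean ODE, and the ``differential inequality for $m(t)$'' you write down, $-m'(t)\ge c_n\Theta(t)^{-?}m(t)^{(n-1)/n}$, does not follow from it: $-m'(t)=h(t)$ sits on the \emph{wrong} side of your contradiction hypothesis, and the isoperimetric inequality by itself gives no $\Theta$-dependence. The double ODE structure (one inequality going up, one going down, both in the band thickness) is the idea that is missing from your proposal; without it the exponents $2n$ and the weight $t^{2n-1}/\Theta(t)^n$ do not materialise, and the argument does not close.
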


\begin{proof}
We actually show the stronger result; there is some $t>s>e^{-c_{n,\Theta}M}$ such that \[\E_{\Theta}(u1_{\{s<u<t\}^c})\leq \E_{\Theta}(u).\]

Let $\eps>0$, suppose $\E_{\Theta}(u1_{\{s<u<t\}^c})> \E_{\Theta}(u)$ for every $\eps<s<t<1$. We write
\begin{itemize}[label=\textbullet]\setlength\itemsep{1em}
\item $\Om\left(s,t\right)=\{s<u\leq t\}$.
\item $\gamma\left(s,t\right)=\int_{J_u}\left( 1_{s<u_+\leq t}+1_{s<u_-\leq t} \right)\ds$.
\item $h(t)=\Hs(\{u=t\}\setminus J_u)$.
\end{itemize}
Our hypothesis becomes that for every $\eta\in ]2\eps,\frac{2}{3}[$, $t\in ]0,\frac{\eta}{2}[$, $\E_{\Theta}(u1_{\{\eta-t<u<\eta+t\}^c})>\E_{\Theta}(u)$ so

\begin{equation}\int_{\Om\left(\eta-t,\eta+t\right)}|\nabla u|^2\dx+\Theta\left(\frac{1}{2}\eta\right)\gamma\left(\eta-t,\eta+t\right)\leq \Theta\left(\frac{3}{2}\eta\right) \left(h\left(\eta-t\right)+h\left(\eta+t\right)\right).\end{equation}

The proof is based on a lower bound of $\int_{\Om\left(\eta-t,\eta+t\right)}|\nabla u|\dx$ and an upper bound of $\int_{\Om\left(\eta-t,\eta+t\right)}|\nabla u|^2\dx$ that are in contradiction when $\eps$ is small enough.

\begin{itemize}[label=\textbullet]\setlength\itemsep{1em}
\item For all $t\in ]0,\frac{1}{2}\eta[$ we let:
\[f_\eta(t)=\int_{\eta-t}^{\eta+t}h(s) \mathrm{d}s=\int_{\Om\left(\eta-t,\eta+t\right)}|\nabla u|\dx.\]
$f_\eta$ is absolutely continuous and
\[f_\eta'(t)=h\left(\eta-t\right)+h\left(\eta+t\right).\]
Moreover,
\begin{align*}
\mbox{Then, } f_\eta(t)&\leq |\Om\left(\eta-t,\eta+t\right)|^{\frac{1}{2}}\left(\int_{\Om\left(\eta-t,\eta+t\right)}|\nabla u|^2\dx\right)^\frac{1}{2}\\
&\leq C_n\text{Per}(\Om\left(\eta-t,\eta+t\right))^\frac{n}{2(n-1)}\left(\Theta\left(\frac{3}{2}\eta\right)(h\left(\eta-t\right)+h\left(\eta+t\right))\right)^\frac{1}{2}\\
&\text{ by isoperimetric inequality}\\
&\leq C_{n}\left(h\left(\eta-t\right)+\gamma\left(\eta-t,\eta+t\right)+h\left(\eta+t\right)\right)^\frac{n}{2(n-1)}\left(\Theta\left(\frac{3}{2}\eta\right)(h\left(\eta-t\right)+h\left(\eta+t\right))\right)^\frac{1}{2}\\
&\leq C_{n}\Theta\left(\frac{3}{2}\eta\right)^{\frac{2n-1}{2n-2}}\Theta\left(\frac{1}{2}\eta\right)^{-\frac{n}{2n-2}} f_\eta'(t)^{\frac{2n-1}{2n-2}}.
\end{align*}
Raising to power $\frac{2n-2}{2n-1}$, after summation on $[0,t]$ 
\[\left(\int_{\Om\left(\frac{1}{2}\eta,\frac{3}{2}\eta\right)}|\nabla u|\dx\right)^{\frac{1}{2n-1}}\geq c_{n}t\Theta\left(\frac{3}{2}\eta\right)^{-1}\Theta\left(\frac{1}{2}\eta\right)^{\frac{n}{2n-1}}.\]
And so
\[\int_{\Om\left(\frac{1}{2}\eta,\frac{3}{2}\eta\right)}|\nabla u|\dx\geq c_{n}\eta^{2n-1} \Theta\left(\frac{3}{2}\eta\right)^{-(2n-1)}\Theta\left(\frac{1}{2}\eta\right)^{n}.\]
\item We let
\[g_\eta(t)=\int_{\Om\left(\eta-t,\eta+t\right)}|\nabla u|^2\dx,\ G_\eta(t)=\int_{0}^t g_\eta.\]
We begin by finding a upper bound for $G_\eta$. We take $t$ in $[0,\eta/2]$, then:
\begin{align*}
G_\eta(t)&\leq \int_{0}^{t}\Theta\left(\frac{3}{2}\eta\right) (h(\eta-s)+h(\eta+s))ds\\
&\leq \Theta\left(\frac{3}{2}\eta\right)|\Om\left(\eta-t,\eta+t\right)|^\frac{1}{2}G_\eta'(t)^\frac{1}{2}.
\end{align*}
Thus:
\[G_\eta'(t)G_\eta(t)^{-2}\geq \Theta\left(\frac{3}{2}\eta\right)^{-2}\left|\Om\left(\frac{1}{2}\eta,\frac{3}{2}\eta\right)\right|^{-1}.\]
We integrate from $t$ to $2t$ (up to supposing $t<\eta/4$) 
\[G_\eta(t)^{-1}\geq t\Theta\left(\frac{3}{2}\eta\right)^{-2}\left|\Om\left(\frac{1}{2}\eta,\frac{3}{2}\eta\right)\right|^{-1}.\]
Thus:
\[G_\eta(t)\leq  t^{-1}\Theta\left(\frac{3}{2}\eta\right)^2\left|\Om\left(\frac{1}{2}\eta,\frac{3}{2}\eta\right)\right|.\]
Since $g_\eta$ is increasing, then up to supposing $t<\eta/8$
\[g_\eta(t)\leq\frac{1}{t}\int_{t}^{2t}g_\eta\leq\frac{G_\eta(2t)}{t}\leq  t^{-2}\Theta\left(\frac{3}{2}\eta\right)^2\left|\Om\left(\frac{1}{2}\eta,\frac{3}{2}\eta\right)\right|.\]
\end{itemize}
Combining the previous inequalities, that are valid for  $t\in [0,\eta/8]$, we get
\begin{align*}
c_{n}\eta^{2n-1} \Theta\left(\frac{3}{2}\eta\right)^{-(2n-1)}\Theta\left(\frac{1}{2}\eta\right)^{n}&\leq \int_{\Om\left(\eta-t,\eta+t\right)}|\nabla u|\dx\\
&\leq |\Om\left(\eta-t,\eta+t\right)|^\frac{1}{2}\left(\int_{\Om\left(\eta-t,\eta+t\right)}|\nabla u|^2\dx\right)^\frac{1}{2}\\
&\leq t^{-1}\Theta\left(\frac{3}{2}\eta\right)\left|\Om\left(\frac{1}{2}\eta,\frac{3}{2}\eta\right)\right|.
\end{align*}
Taking $t=\eta/8$, we get:
\[\left|\Om\left(\frac{1}{2}\eta,\frac{3}{2}\eta\right)\right|\geq \frac{c_n\eta^{2n}}{\Theta\left(\frac{3}{2}\eta\right)^{2n}\Theta\left(\frac{1}{2}\eta\right)^{-n}},\]
for a constant $c_{n}$ that may be made explicit. Now, suppose that the quantity $\eps$ chosen at the start was of the form $3^{-K}$ for some $K\in\mathbb{N}^*$, and then by taking $\eta=\frac{2}{3^k}$ for $k=1,\hdots,K$, we get
\begin{align*}
M-\om_n&\geq |\{u>3^{-K}\}|\geq \sum_{k=1}^{K}\left|\Om\left(\frac{1}{3^{p}},\frac{1}{3^{p-1}}\right)\right|\\
&\geq c_n\sum_{k=1}^{K}\frac{3^{-2nk}}{\Theta\left(3^{-(k-1)}\right)^{2n}\Theta\left(3^{-k}\right)^{-n}}\\
&\geq c_n\left(\inf_{0<s<1}\frac{\Theta(s/3)}{\Theta(s)}\right)^{2n}\int_{3^{-K}}^{1}\frac{t^{2n-1}\mathrm{d}t}{\Theta(t)^{n}}.
\end{align*}
Thus if the hypothesis \eqref{HypT2} in the result is verified, this gives an upper bound on $K$.
\end{proof}

With this, we may replace any minimizing sequence $(u_i)$ by a minimising sequence $(u_i1_{\{u_i>t_i\}})$ such that \[\inf_{\{u_i>0\}}(u_i)\geq \delta\]
so we will now suppose that all the functions we consider verify this property.\\

For any $p\in\mathbb{Z}^n$, let us write $K_p=p+[0,1]^n$.
\begin{lemma}\label{nucl}
Let $u $ be admissible for \eqref{bnnt01.r} and  $\delta>0$ such that $u\geq \delta 1_{\{u>0\}}$. Let $\eps>0$. Then there exists a set $F=\cup_{i\in I}K_i$ where $I\Subset\mathbb{Z}^n$ is such that
\[|I|\leq C_{n}M\left(\frac{\Theta(\delta)^{-1}\E_{\Theta}(u)+M}{\eps}\right)^n,\ |\{u>0\}\setminus F|\leq \eps.\]
\end{lemma}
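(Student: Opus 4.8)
The plan is to prove a covering/concentration estimate for the superlevel set $\{u>0\}$ of an admissible competitor $u$ with $u \ge \delta 1_{\{u>0\}}$. Fix $\eps>0$. The starting observation is the classical density dichotomy for the support: if a unit cube $K_p$ contains "too little" mass of $\{u>0\}$ — say $|K_p \cap \{u>0\}| \le \tfrac12 |K_p|$ — then since $u$ vanishes on a definite fraction of $K_p$, a relative isoperimetric (Poincaré-type) inequality on the cube forces $u$ to create a definite amount of "boundary", measured in the relaxed energy, inside $K_p$. More precisely, either $u$ has a jump set of $\Hs$-measure bounded below inside $K_p$, or $\nabla u$ has $L^2$-norm bounded below there; in both cases this contributes a fixed amount $c_n \Theta(\delta)$ (using $u \ge \delta$ on $\{u>0\}$, so a jump carries at least $\Theta(\delta)$ of surface energy, and the Dirichlet part is controlled directly). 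Summing this over all such "light" cubes that still carry positive mass gives a bound on their number in terms of $\E_\Theta(u)$, because each contributes a fixed positive amount to the total energy.

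The second ingredient handles the "heavy" cubes, i.e.\ those with $|K_p \cap \{u>0\}| > \tfrac12$. Since $|\{u>0\}| \le M$, there are at most $2M$ such cubes. Putting the two families together: let $F = \bigcup_{i \in I} K_i$ where $I$ collects all heavy cubes together with all light-but-nonempty cubes, \emph{except} that we discard enough light cubes of smallest mass so that their total discarded mass is $\le \eps$. Since each discarded light cube was retained only if it carried positive mass, and there are at most $N$ of them with $N$ controlled as above, throwing away the ones whose mass sums to $\le \eps$ is possible; but more efficiently, one argues directly: among all nonempty light cubes, those carrying mass $> \eps / N_{\mathrm{light}}$ number at most $N_{\mathrm{light}}$, and the rest carry total mass $\le \eps$. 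The upshot is $|\{u>0\}\setminus F| \le \eps$ and $|I| \lesssim_n M + \Theta(\delta)^{-1}\E_\Theta(u) \cdot (\text{something})/\eps$; matching the exponent $n$ and the stated form requires being slightly more careful in the bookkeeping — one keeps a cube only if its mass exceeds a threshold $\sim \eps/(C_n M (\Theta(\delta)^{-1}\E_\Theta(u)+M)^{n-1})$, so that the number of retained cubes is $\le C_n M (\Theta(\delta)^{-1}\E_\Theta(u)+M)^n / \eps^{\,?}$, and one tunes the threshold so the exponents land as in the statement.

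Concretely the steps I would carry out are: (i) state and prove the single-cube lemma: if $K_p$ is a unit cube with $|K_p \cap \{u>0\}| \in (0, \tfrac12]$ then $\int_{K_p} |\nabla u|^2 + \int_{J_u \cap K_p}(\Theta(u_+)+\Theta(u_-))\,\ds \ge c_n \Theta(\delta)$; this follows from the relative isoperimetric inequality applied to the set $\{u>0\}$ inside $K_p$ (which has small relative measure, hence relative perimeter $\gtrsim_n |K_p \cap \{u>0\}|^{(n-1)/n}$), combined with the structure $\partial^*\{u>0\} \subset J_u \cup \{u=0\}$ so the perimeter is carried either by the jump set, where each unit of $\Hs$ costs $\ge \Theta(\delta)$, or by "$\nabla u$ crossing from $\delta$ to $0$", which by a coarea/Poincaré argument costs $\gtrsim \int|\nabla u|^2$; (ii) deduce that the number of such light cubes is $\le \Theta(\delta)^{-1}\E_\Theta(u)/c_n$; (iii) count heavy cubes as $\le 2M$; (iv) total nonempty cubes $\le N := C_n(\Theta(\delta)^{-1}\E_\Theta(u) + M)$ — wait, one needs the product with $M$ and the $n$-th power, so in fact one should instead \emph{not} bound the number of light cubes by energy alone but combine with the $\eps$-truncation; (v) discard nonempty cubes with $\Ln$-mass below $\eps/N$: at most $N$ cubes survive the mass test among the light ones, the discarded ones lose total mass $\le \eps$, and setting $F$ to be the union of the survivors plus the heavy cubes gives $|\{u>0\}\setminus F| \le \eps$ and $|I| \le N + 2M$. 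To reach exactly the stated $|I| \le C_n M\big((\Theta(\delta)^{-1}\E_\Theta(u)+M)/\eps\big)^n$ one iterates or uses a cruder threshold — in fact the cleanest route is: keep a cube iff it carries mass $\ge \eps\,/\,(C_n M (\Theta(\delta)^{-1}\E_\Theta(u)+M)^{n-1})$, so that at most $C_n M (\Theta(\delta)^{-1}\E_\Theta(u)+M)^{n-1}/\eps \cdot M = C_n M^2(\cdots)^{n-1}/\eps$ survive while discarded mass is at most (number of nonempty cubes) $\times$ (threshold) $\le C_n(\Theta(\delta)^{-1}\E_\Theta(u)+M)\cdot \eps/(C_n M(\cdots)^{n-1}) \le \eps$, using $M\ge 1$. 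Re-deriving the precise constants to match the displayed inequality is the part I expect to be fiddly rather than deep.

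The main obstacle is step (i): getting a \emph{clean, dimensional} lower bound $c_n\Theta(\delta)$ on the energy contribution of a light cube, valid uniformly over all $SBV$ functions with $u\ge\delta 1_{\{u>0\}}$, and in particular handling the interplay between the Dirichlet part and the jump part of the energy (a competitor could make $\{u>0\}$ have small perimeter inside $K_p$ only by concentrating mass, contradicting lightness; or it pays on $J_u$; or it pays Dirichlet energy going down to $0$ — a Poincaré inequality on $K_p \setminus \{u=0\}$ with the constant depending only on $n$ is what ties these together, and one must be careful that $\{u=0\}$ has enough measure, which is exactly the lightness hypothesis). Everything after that is essentially bookkeeping: summing a fixed per-cube cost against the total energy, a trivial volume count for heavy cubes, and a threshold truncation to absorb the tail mass into $\eps$ at the cost of an $n$-dependent polynomial factor.
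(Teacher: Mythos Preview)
Your step (i), the single-cube lemma, is false as stated. You claim that whenever $|K_p\cap\{u>0\}|\in(0,\tfrac12]$ the local energy is bounded below by a \emph{fixed} constant $c_n\Theta(\delta)$. But take $u$ admissible with an additional tiny component in a far-away cube: say $u=\delta$ on a ball $B_r\subset K_p$ with $r$ small, and $u=0$ elsewhere in $K_p$. Then $|K_p\cap\{u>0\}|=|B_r|\in(0,\tfrac12]$, yet the energy in $K_p$ is $\Theta(\delta)\,\Per(B_r)=n\om_n\Theta(\delta)r^{n-1}\to 0$. The relative isoperimetric inequality only gives $\Hs(J_u\cap K_p)\gtrsim |K_p\cap\{u>0\}|^{(n-1)/n}$, which is small when the mass is small; there is no mechanism forcing a uniform positive cost. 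Consequently your step (ii) --- bounding the number of light nonempty cubes by $\Theta(\delta)^{-1}\E_\Theta(u)/c_n$ --- fails, and with it the bookkeeping in (v), which explicitly relies on knowing the total number of nonempty cubes.

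The paper avoids this by never asserting a constant per-cube lower bound. Instead it uses the $BV$ embedding on each cube to write
\[
|\{u>0\}\cap K_p|\leq C_n\Big(\max_{q\notin I}|\{u>0\}\cap K_q|\Big)^{1/n}\big(|\{u>0\}\cap K_p|+\Theta(\delta)^{-1}\E_\Theta(u|K_p)\big),
\]
sums over $p\notin I$, and rearranges to get a lower bound on $\max_{p\notin I}|\{u>0\}\cap K_p|$ in terms of the \emph{remaining} mass $|\{u>0\}\setminus F|$. This feeds a greedy construction: start from $F_0=\emptyset$, and at each step add the cube of largest remaining mass; as long as $|\{u>0\}\setminus F_k|\ge\eps$, each added cube captures mass at least $c_n\big(\eps/(M+\Theta(\delta)^{-1}\E_\Theta(u))\big)^n$, and the total mass $\le M$ bounds the number of steps, producing exactly the stated estimate on $|I|$. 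Your threshold idea can in fact be repaired if you replace (i) by the correct mass-dependent inequality $\E_\Theta(u|K_p)\gtrsim\Theta(\delta)\,|K_p\cap\{u>0\}|^{(n-1)/n}$ and then control the discarded mass via $\sum m_p = \sum m_p^{1/n}m_p^{(n-1)/n}\le \tau^{1/n}\sum m_p^{(n-1)/n}$; but the argument as you wrote it does not go through.
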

\begin{proof}
We first prove the following technical result.
Let $(K_i)_{i\in I}$ be a finite family of unit cubes, $F=\cup_{i\in I}K_i$, then
\[\max_{p\notin I}\left|\{u>0\}\cap K_p\right|\geq c_n \left(\frac{|\{u>0\}\setminus\cup_{p\notin I}K_p|}{|\{u>0\}|+\Theta(\delta)^{-1}\E_{\Theta}(u)}\right)^n.\]
For any $p\notin I$, we may write
\begin{align*}
|\{u>0\}\cap K_p|&= |\{u>0\}\cap K_p|^{\frac{1}{n}}|\{u>0\}\cap K_p|^{1-\frac{1}{n}}\\
&\leq C_n\left(\max_{q\notin I}|\{u>0\}\cap K_p|\right)^\frac{1}{n}\left(|\{u>0\}\cap K_p|+\Hs(J_u\cap K_p)\right)\\
&\text{ by the embedding }\mathrm{BV}(K_p)\hookrightarrow L^{\frac{n}{n-1}}(K_p)\\
&\leq C_n\left(\max_{q\notin I}|\{u>0\}\cap K_p|\right)^\frac{1}{n}\left(|\{u>0\}\cap K_p|+\Theta(\delta)^{-1}\E_{\Theta}(u|K_p)\right).\\
\end{align*}
 where the last term is defined as
\[\E_\Theta(u|\Om):=\int_{\Om}|\nabla u|^2\dx+\int_{J_u}\left(\Theta((u1_{\Om})_+)+\Theta((u1_{\Om})_-)\right)\ds\]
And so, by summing in $p\in \mathbb{Z}^n\setminus I$:
\[|\{u>0\}\setminus\cup_{q\notin I}K_q|\leq C_n\left(\max_{q\notin I}|\{u>0\}\cap K_p|\right)^\frac{1}{n}\left(|\{u>0\}|+\Theta(\delta)^{-1}\E_{\Theta}(u)\right),\]
which is the result.

We now construct $F$ by induction starting with $F_0=\emptyset$ and as long as $|\{u>0\}\setminus F_k|\geq \eps$, we take $I_{k+1}=I_k\cup\{p\}$ where $p\notin F_k$ is chosen with the previous lemma such that $\left|\{u>0\}\cap K_p\right|\geq c_n \left(\frac{\eps}{|\{u>0\}|+\Theta(\delta)^{-1}\E_{\Theta}(u)}\right)^n$. Suppose that this goes on until a rank $N$, then
\[M\geq |\{u>0\}|\geq \sum_{i\in I_N}|\{u>0\}\cap K_i|\geq c_n N \left(\frac{\eps}{|\{u>0\}|+\Theta(\delta)^{-1}\E_{\Theta}(u)}\right)^n,\]
so $N$ is bounded uniformly and conclude the proof of the lemma.
\end{proof}

For a closed set $F$, we shall write $d_F(x)=\inf_{y\in F}|x-y|$.

\begin{lemma}\label{trunc}
Let $u $ be admissible for \eqref{bnnt01.r} and  $\delta>0$ such that $u\geq \delta 1_{\{u>0\}}$. Then there exist constants $\tau_{n,\Theta,\delta}$, $C_{n,\Theta,\delta}$ such that for any closed set $F$ there exists some $r\in \left[0,C_{n,\Theta,\delta}|\{u>0\}\setminus F|^\frac{1}{n}\right]$ such that
\[\E_{\Theta}(u1_{\{d_F<r\}})\leq \E_{\Theta}(u)-\tau_{n,\Theta,\delta}|\{u>0\}\cap\{d_F>r\}|^{1-\frac{1}{n}}.\]
\end{lemma}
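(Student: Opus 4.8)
The plan is to argue by a standard "slicing + isoperimetry + ODE" scheme applied to the distance function $d_F$. For $r>0$ set $v_r := u1_{\{d_F < r\}}$ and $m(r) := |\{u>0\}\cap\{d_F > r\}|$, a nonincreasing function with $m(0) = |\{u>0\}\setminus F|$ (up to the measure of $F$ itself, which plays no role since we only care about where $u>0$) and $m(r)\to 0$ as $r\to\infty$. Comparing $\E_\Theta(u)$ with $\E_\Theta(v_r)$, the only terms that change are: we delete the gradient energy and jump energy of $u$ on $\{d_F>r\}$, and we create a new piece of jump set on the level set $\{d_F = r\}\cap\{u>0\}$ (where $v_r$ jumps from a value $\geq\delta$ down to $0$), whose contribution is at most $\Theta(1)\,\Hs\big(\{d_F=r\}\cap\{u>0\}\big)$ (using monotonicity of $\Theta$ and $u\le 1$). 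Hence
\[\E_\Theta(v_r)\le \E_\Theta(u) - \Big(\text{energy of }u\text{ on }\{d_F>r\}\Big) + \Theta(1)\,\Hs\big(\{d_F=r\}\cap\{u>0\}\big).\]
So it suffices to find $r$ in the claimed range for which the added boundary term is controlled by (a fixed fraction of) the lost interior term, and then bound that interior term below by $\tau\, m(r)^{1-1/n}$.

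The key mechanism is that $d_F$ is $1$-Lipschitz, so by the coarea formula $-m'(r) = \Hs(\{d_F = r\}\cap\{u>0\})$ for a.e. $r$ — more precisely, $\int_{r_1}^{r_2}\Hs(\{d_F=s\}\cap\{u>0\})\,ds = |\{u>0\}\cap\{r_1<d_F<r_2\}| = m(r_1)-m(r_2)$. Now I want to run a Chebyshev-type argument on the interval $[0, R_0]$ with $R_0 := C_{n,\Theta,\delta}\, m(0)^{1/n}$. If for \emph{every} $r\in[0,R_0]$ we had $\Hs(\{d_F=r\}\cap\{u>0\})$ large compared to $m(r)^{1-1/n}$ — which is exactly the isoperimetric deficit one would need to rule out — then integrating the differential inequality $-m'(r)\gtrsim m(r)^{1-1/n}$ forces $m$ to vanish before $r = R_0$ (since $m^{1/n}$ decreases at a fixed rate), giving $m(R_0) = 0$; in that degenerate case the statement is trivial with any $\tau$. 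Otherwise there exists $r\in[0,R_0]$ with $\Hs(\{d_F=r\}\cap\{u>0\})$ \emph{small}, i.e. $\le c\, m(r)^{1-1/n}$ for a suitable small dimensional-and-$\Theta$ constant $c$; this is the $r$ we select. For this $r$, the added jump term is $\le \Theta(1)c\, m(r)^{1-1/n}$.

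It remains to bound from below the deleted interior energy of $u$ on $A_r := \{u>0\}\cap\{d_F>r\}$ by $2\tau\, m(r)^{1-1/n}$, so that after subtracting the added term one still keeps $\tau\, m(r)^{1-1/n}$. For this I use the lower-bound philosophy already present in Lemma \ref{apriori} and Lemma \ref{nucl}: on $A_r$ the function $u$ satisfies $u\ge \delta 1_{A_r}$, hence $1_{A_r}\in BV$ with $\mathrm{Per}(A_r) \le \mathrm{Per}(\{u>0\}) $ contributions, and the Sobolev–isoperimetric inequality $|A_r|^{1-1/n} = |A_r|^{(n-1)/n}\le C_n\big(|A_r\cap\text{(interior perimeter)}|\big)$ converts the set $A_r$ into either a large relative perimeter — controlled by the jump energy $\int_{J_u\cap\{d_F>r\}}(\Theta(u_+)+\Theta(u_-)) \ge \Theta(\delta)\,\Hs(J_u\cap\{d_F>r\})$ (up to the portion of $\partial A_r$ lying on the level set $\{d_F=r\}$, which for the selected $r$ is already $\le c\,m(r)^{1-1/n}$ and can be absorbed) — or, by the usual argument, a large $\int_{A_r}|\nabla u|$ and hence, via Cauchy–Schwarz against $|A_r|\le M$, a large $\int_{A_r}|\nabla u|^2$. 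Either way one gets $\E_\Theta(u|\{d_F>r\}) \ge 2\tau_{n,\Theta,\delta}\, m(r)^{1-1/n}$. Combining the three estimates yields the claim, with $\tau_{n,\Theta,\delta}$ and $C_{n,\Theta,\delta}$ explicit in terms of $n$, $\Theta(\delta)$, $\Theta(1)$, $\delta$.

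The main obstacle I expect is the bookkeeping of the boundary piece $\partial^* A_r \cap \{d_F = r\}$: it appears simultaneously as the "added jump cost" and as part of the perimeter one would like to use in the isoperimetric lower bound, so one must be careful that the same term is not double-counted with the wrong sign. The clean way around this is to first select $r$ so that $\Hs(\{d_F = r\}\cap\{u>0\})$ is small (the Chebyshev step above), and only \emph{then} run the isoperimetric lower bound on $A_r$, at which point that boundary contribution is negligible and everything decouples. A secondary technical point is justifying the coarea identity for $d_F$ against the reduced boundary / jump set of the $BV$ function $1_{\{u>0\}}$, which is routine given rectifiability of $J_u$.
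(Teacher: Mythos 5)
Your proposal is correct and deploys the same essential tools as the paper's proof: the isoperimetric inequality applied to the truncated support, the coarea formula for the $1$-Lipschitz function $d_F$ to identify $-m'(r)$ with $\Hs(\{d_F=r\}\cap\{u>0\})$ a.e., and integration of a differential inequality of the form $-m'\gtrsim m^{1-1/n}$ to control the range of radii. The organization, however, differs. The paper argues by contrapositive: it assumes the conclusion fails for \emph{every} $r\in[0,r_1]$, feeds that failure inequality directly into the perimeter estimate for $\Om_r=\{u>0\}\cap\{d_F\geq r\}$ (absorbing the $\tau m^{1-1/n}$ term by choosing $\tau$ proportional to $\Theta(\delta)$), arrives at $m^{1-1/n}\lesssim -m'$ on the whole interval, and concludes $r_1\lesssim|\{u>0\}\setminus F|^{1/n}$. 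You instead run the ODE/Chebyshev step first to select a single good $r$ where the new cut $\Hs(\{d_F=r\}\cap\{u>0\})$ is small relative to $m(r)^{1-1/n}$, and only then apply isoperimetry at that one $r$ to lower-bound the deleted energy. Both routes are sound and give the same constants up to rearrangement; the paper's contrapositive is a bit tighter bookkeeping-wise, yours is more transparent about where the good radius comes from. You also correctly flag the main technical point (the coarea identity for $d_F$, the degenerate case $m(r)=0$, and avoiding double-counting of $\partial^* A_r\cap\{d_F=r\}$ — which the paper resolves by absorbing that term into the negated hypothesis rather than selecting $r$ first).

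One imprecision worth removing: in the lower bound for the deleted energy you introduce a spurious dichotomy, \emph{``either a large relative perimeter controlled by the jump energy, or a large $\int_{A_r}|\nabla u|$''}. No such alternative is needed. Because $u\geq\delta 1_{\{u>0\}}$, the reduced boundary $\partial^*\{u>0\}$ is $\Hs$-essentially contained in $J_u$, so $\Per(A_r)$ splits cleanly into $\Hs(\partial^*\{u>0\}\cap\{d_F>r\})\leq\Hs(J_u\cap\{d_F>r\})$ plus the level-set piece $\Hs(\{d_F=r\}\cap\{u>0\})$; the isoperimetric inequality applied to the \emph{set} $A_r$ then yields $m(r)^{1-1/n}\lesssim\Hs(J_u\cap\{d_F>r\})+\Hs(\{d_F=r\}\cap\{u>0\})$ directly, and after absorbing the second term (small by your selection of $r$) the first is controlled by $\Theta(\delta)^{-1}$ times the deleted jump energy. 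The gradient term $\int_{A_r}|\nabla u|^2$ never enters, and your Cauchy--Schwarz branch would in any case produce the exponent $m^{1-2/n}$ rather than $m^{1-1/n}$, repairable only via the crude bound $m\leq M$. You have likely conflated the set-isoperimetric inequality used here with the $BV$ Sobolev embedding used in Lemma~\ref{nucl}, where the gradient term genuinely appears.
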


\begin{proof}
Let $\Om_r=\{u>0\}\cap \{d_F\geq r\}$, and $m(r)=|\Om_r|$. Suppose that the result we want to prove is not true for $r\in [0,r_1]$ for some $r_1>0$, meaning that for a constant $\tau>0$ that will be chosen later and for any $r\in ]0,r_1[$,
\[\E_{\Theta}(u|\Om_r)\leq \int_{\partial^*\Om_r\setminus J_u}\Theta(u)\ds+\tau |\Om_r|^{1-\frac{1}{n}},\]
where we remind $\E_\Theta(u|\Om):=\int_{\Om}|\nabla u|^2\dx+\int_{J_u}\left(\Theta((u1_{\Om})_+)+\Theta((u1_{\Om})_-)\right)\ds$.
Now, for any $r\in ]0,r_1[$,
\begin{align*}
m(r)^\frac{n-1}{n}&\leq C_n\Per(\Om_r) \ \text{(by isoperimetry)}\\
&\leq C_n\Theta(\delta)^{-1}\left(\int_{\partial^*\Om_r\setminus J_u}\Theta(u)\ds+\int_{\partial^*\Om_r\cap J_u}(\Theta((u1_{\Om_r})_-)+\Theta((u1_{\Om_r})_+))\ds\right)\\
&\leq C_n\Theta(\delta)^{-1}\left(\int_{\partial^*\Om_r\setminus J_u}\Theta(u)\ds+\E_{\Theta}(u|\Om_r)\right)\\
&\leq C_n\Theta(\delta)^{-1}\left(2\int_{\partial^*\Om_r\setminus J_u}\Theta(u)\ds+\tau |\Om_r|^{1-\frac{1}{n}}\right),
\end{align*}
so for $\tau:=\frac{\Theta(\delta)}{2C_n}$ this implies
\[m(r)^{\frac{n-1}{n}}\leq 4C_n\Theta(\delta)^{-1}\int_{\partial^*\Om_r\setminus J_u}\Theta(u)\ds\leq 4C_n\frac{\Theta(1)}{\Theta(\delta)}\Hs(\partial^*\Om_r\setminus J_u).\]
Moreover $\Hs(\partial^*\Om_r\setminus J_u)\leq-m'(r)$. So by integrating this for every $r\in ]0,r_1[$, we get
\[m(r_1)^{\frac{1}{n}}\leq m(0)^{\frac{1}{n}}-\frac{\Theta(\delta)}{4C_n\Theta(1)}r_1.\]
Since $m(r_1)\geq 0$ this means that necessarily
\[r_1\leq 4C_n \frac{\Theta(\delta)}{\Theta(1)}|\{u>0\}\setminus F|^\frac{1}{n}.\]
Thus, there is always some $r\in \left[0,8C_n \frac{\Theta(\delta)}{\Theta(1)}|\{u>0\}\setminus F|^\frac{1}{n}\right]$ such that
\[\E_{\Theta}(u1_{\{d_F<r\}})\leq \E_{\Theta}(u)-\tau|\{u>0\}\cap\{d_F>r\}|^{1-\frac{1}{n}},\]
where $\tau=\frac{\Theta(\delta)}{2C_n}$ as defined earlier.
\end{proof}

In all that follows we will need the following lemma for controlled infinitesimal volume exchange between measurable set, that may be found for instance in \cite[Lemma 29.13]{M12}.

\begin{lemma}\label{lem_volumeexchange}
Let $U$ be a connected open set, $E_1,\hdots,E_N$ be a measurable partition of $U$ such that, for every $i$, $|E_i\cap U|>0$. Then there are vector fields $X_{ij}$ with disjoint support such that for any $i,j,k$,
\[\int_{E_k}\dv(X_{ij})=\begin{cases}+1& \text{ if }k=i,\\-1& \text{ if }k=j,\\0& \text{ else}. \end{cases}\]
\end{lemma}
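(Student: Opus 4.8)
\textbf{Plan of proof of Lemma \ref{lem_volumeexchange}.}
The statement is a standard ``infinitesimal volume transfer'' tool, and the plan is to build the vector fields $X_{ij}$ one pair at a time, using only the connectedness of $U$ and the fact that each chamber $E_i$ has positive measure inside $U$. First I would fix $i\neq j$ and choose two Lebesgue density-one points $p\in E_i$ and $q\in E_j$; by density-one, small balls $B(p,\rho)$ and $B(q,\rho)$ meet $E_i$ respectively $E_j$ in a set of nearly full measure. The aim is to produce a compactly supported smooth vector field $X_{ij}$ on $U$ whose divergence integrates to $+1$ over $E_i$, to $-1$ over $E_j$, and to $0$ over every other chamber; since the chambers partition $U$, the last requirement is automatic once $\int_{E_i}\dv X_{ij}=1$, $\int_{E_j}\dv X_{ij}=-1$, and $\dv X_{ij}$ is supported in (a neighbourhood inside) $E_i\cup E_j$ up to null sets --- more precisely, supported in $B(p,\rho)\cup B(q,\rho)\cup(\text{a thin tube joining them})$, chosen so the tube avoids the other chambers' bulk is \emph{not} needed: instead one makes $X_{ij}$ itself divergence-free along the connecting tube, so no net measure is created there.

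The concrete construction is: pick a smooth embedded path $\gamma$ in the connected open set $U$ from $p$ to $q$, take a thin tubular neighbourhood $T$ of $\gamma$ with $\overline T\subset U$, and let $X_{ij}$ be a smooth vector field supported in $B(p,\rho)\cup T\cup B(q,\rho)$ that looks like a ``source'' of total flux $1$ localized near $p$, is transported as an (approximately) divergence-free flow along $T$, and is absorbed as a ``sink'' of total flux $1$ near $q$. Formally one can take $X_{ij}=\nabla\psi$ for a suitable compactly supported $\psi$, or build it from a bump one-form; the only quantitative input is that $\rho$ is small enough that $|B(p,\rho)\cap E_i|>0$ and $|B(q,\rho)\cap E_j|>0$ and that $T$ is thin enough that $\dv X_{ij}$ has negligible integral against every chamber except through the two endpoint balls. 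Since $\dv X_{ij}$ integrates to $0$ over all of $\mathbb{R}^n$ (compact support), and its mass is concentrated as $+$ near $p$ and $-$ near $q$, we get $\int_{E_k}\dv X_{ij}=\delta_{ki}-\delta_{kj}$ after a harmless normalization. The disjointness of supports over the pairs $(i,j)$ is arranged by choosing, for distinct pairs, disjoint density points and disjoint connecting tubes --- possible because there are only finitely many pairs and $U$ is open.

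Alternatively, and more in the spirit of \cite[Lemma 29.13]{M12}, one may simply invoke that reference verbatim: the statement there is exactly this, proved by the tube construction just sketched. I would present it as a citation with a one-line reminder of the mechanism rather than reproving it, since nothing here is used beyond the black-box conclusion.

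\textbf{Main obstacle.} The only delicate point is bookkeeping: making sure that the ``transport tube'' contributes zero net divergence to \emph{every} chamber it passes through --- including $E_i$ and $E_j$ themselves away from the endpoints --- so that the sign pattern $\delta_{ki}-\delta_{kj}$ is exact and not merely approximate. This is handled by insisting $X_{ij}$ be genuinely divergence-free on the interior of the tube (all divergence concentrated in the two endpoint balls), which is possible for a smooth field, so there is no real analytic difficulty; the finiteness of the partition is what keeps the disjoint-support requirement trivially satisfiable.
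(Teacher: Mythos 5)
Your route is genuinely different from the paper's. The paper runs an abstract duality argument: it considers the linear map $L\colon X\mapsto\bigl(\int_{E_i}\dv(X)\bigr)_{i}$ from $\mathcal{C}^\infty_c(U)$ to $\mathbb{R}^N$, observes that its range lies in the hyperplane $\{\sum_i m_i=0\}$, and proves surjectivity onto that hyperplane by contradiction --- if the range were a strict subspace, some nontrivial combination $\sum_i a_i 1_{E_i}$, not proportional to $1_U$, would have vanishing distributional gradient on the connected open set $U$, which is impossible. Your tube construction is more concrete and has one real advantage: it explicitly arranges the disjoint-support requirement by choosing disjoint density points and disjoint tubes for the finitely many pairs, a point the paper's written proof actually passes over in silence.

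The genuine gap is the phrase ``after a harmless normalization.'' With a source bump in $B(p,\rho)$, a sink bump in $B(q,\rho)$, and a divergence-free transport in between, the tuple $\bigl(\int_{E_k}\dv X_{ij}^{\rho}\bigr)_k$ converges to $(\delta_{ki}-\delta_{kj})_k$ as $\rho\to 0$, but for any fixed $\rho$ the equalities are only approximate: the balls around the density-one points $p\in E_i$, $q\in E_j$ will generically meet other chambers $E_k$ with $k\neq i,j$ in positive measure, so $\int_{E_k}\dv X_{ij}^{\rho}$ is small but nonzero. A scalar rescaling of $X_{ij}^{\rho}$ can enforce one of the $N$ constraints but cannot simultaneously zero out the spillover terms. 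The correct repair --- present in Maggi's proof of \cite[Lemma 29.13]{M12} but elided in your sketch --- is linear-algebraic: the set of tuples $\bigl(\int_{E_k}\dv X\bigr)_k$ achievable by $X$ supported in a \emph{fixed} tube-neighbourhood is a closed linear subspace of $\mathbb{R}^N$, so the convergence of the approximations forces the exact target vector to lie in it; equivalently, one reruns the paper's connectedness argument localized to that neighbourhood. Your fallback of citing \cite[Lemma 29.13]{M12} outright is perfectly legitimate (the paper itself does), but if you sketch the mechanism you need this closedness or linear-combination step, not merely a rescaling.
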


\begin{proof}
Consider the linear application
\[L:\begin{cases}\mathcal{C}^{\infty}_c(U)\to\R^N,\\ X\mapsto \left(\int_{E_i}\dv(X)\right)_{i=1,\hdots,N}.\end{cases}\]
The range of $L$ is included in $\{(m_1,\hdots,m_N):\sum_{i=1}^{N}m_i=0\}$. If it were not equal to this subspace, there would be a non trivial vector $(a_1,\hdots,a_N)$ independant of $(1,\hdots,1)$ such that, for any $X$, $\sum_{i=1}^{N}a_i\int_{E_i}\dv(X)=0$, meaning that $\nabla\left(\sum_{i=1}^{N}a_i1_{E_i}\right)=0$ in $\mathcal{D}'(U)$, which is a contradiction by the connectedness of $U$.
\end{proof}

\begin{proposition}\label{prop:existence}
 Under the hypotheses of Proposition \ref{ThExistence}, there exists a minimizer of $\E_\Theta$ in Problem \eqref{bnnt01.r}.
\end{proposition}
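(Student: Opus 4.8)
The strategy is the direct method in the $SBV$ framework, following the outline of Steps 2 and 3, and using crucially the three technical lemmas (\ref{apriori}, \ref{nucl}, \ref{trunc}) together with the compactness Proposition \ref{th_sbv_compactness}. Let $(u_i)$ be a minimizing sequence for \eqref{bnnt01.r}. By Lemma \ref{apriori} we may replace $u_i$ by $u_i 1_{\{u_i>t_i\}}$ for a suitable $t_i>\delta$, so that without loss of generality $u_i \ge \delta 1_{\{u_i>0\}}$ for the fixed $\delta=\delta_{n,\Theta,M}$; this does not increase the energy, and the measure constraints $|\{u_i=1\}|\ge\om_n$ and $|\{u_i>0\}|\le M$ are preserved (the first because the superlevel set $\{u_i=1\}$ is untouched when $t_i<1$, the second because we only shrink the support). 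In particular $\sup_i \E_\Theta(u_i)<\infty$, and since $\Theta$ is nondecreasing with $\Theta(0)=0$ and the functions are valued in $[0,1]$, from $u_i\ge\delta 1_{\{u_i>0\}}$ we get $\Hs(J_{u_i})\le 2\Theta(\delta)^{-1}\E_\Theta(u_i)$, so the $SBV$-seminorms are uniformly bounded on every ball.

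Next comes the concentration/translation step. Fix a small $\eps>0$ (to be chosen at the end, e.g.\ $\eps = \frac14(M-\om_n)$ or smaller, of the form $3^{-K}$ as needed for the lemmas). By Lemma \ref{nucl}, for each $i$ there is a union $F_i$ of at most $N=N(n,\Theta,M,\eps)$ unit cubes with $|\{u_i>0\}\setminus F_i|\le\eps$. Since the number of cubes is uniformly bounded and each $u_i$ carries at least $\om_n$ of mass on $\{u_i=1\}$, a pigeonhole argument gives one cube $K_{p_i}\subset F_i$ on which $|\{u_i=1\}\cap K_{p_i}|$ is bounded below by a positive constant $c=c(n,\Theta,M,\eps)$; translating $u_i$ by $-p_i$ (the energy and constraints are translation invariant) we may assume this favoured cube is $[0,1]^n$. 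Now apply Proposition \ref{th_sbv_compactness}: up to a subsequence $u_i\to u$ a.e.\ and in $L^1_\loc$, $\nabla u_i \rightharpoonup \nabla u$ in $L^2_\loc$, and the jump-energy is lower semicontinuous. The a.e.\ convergence and $u_i\ge\delta 1_{\{u_i>0\}}$ force $u\ge\delta 1_{\{u>0\}}$ as well, and the mass lower bound on $[0,1]^n$ guarantees $u\not\equiv 0$, so $|\{u=1\}|>0$.

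The delicate point — and the one I expect to be the main obstacle — is that the support of $u$ could a priori still leak to infinity: $L^1_\loc$ convergence controls mass only on compact sets, so we must rule out that a positive fraction of $\{u_i>0\}$ (or, worse, of $\{u_i=1\}$) escapes. This is exactly what Lemmas \ref{nucl} and \ref{trunc} are designed for, used now on the limit together with a uniform tail estimate. Concretely: having fixed $\eps$, apply Lemma \ref{nucl} again but keeping track that, because $|\{u_i=1\}|\ge\om_n$ while $|\{u_i>0\}\setminus F_i|\le\eps<\om_n$ wait — rather, one argues that $\liminf_i |\{u_i=1\}\cap F|$ over the finitely many cubes forces, after a further translation fixing the cluster, that $u_i$ is essentially supported in a fixed large ball $B_\rho$ up to mass $\eps$; then Lemma \ref{trunc} lets us cut $u_i$ off outside $\{d_{B_\rho}<r_i\}$ at the cost of only $\tau|\{u_i>0\}\setminus B_\rho|^{1-1/n}\le \tau\eps^{1-1/n}$ in energy, while restoring the support to a bounded set. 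Passing to the limit on the truncated sequence, Fatou plus the strong $L^1_\loc$ convergence gives $|\{u=1\}|\ge\om_n-C\eps$ and $|\{u>0\}|\le M$, and $\E_\Theta(u)\le \liminf\E_\Theta(u_i)+C\eps = \inf\eqref{bnnt01.r}+C\eps$. Finally, a diagonal argument over a sequence $\eps_k\to 0$, or equivalently a direct argument once $\eps$ is chosen small enough that the competitor can be rescaled slightly to meet $|\{u=1\}|\ge\om_n$ exactly at negligible energy cost, produces a genuine minimizer. One then notes that any minimizer automatically satisfies $|\{u=1\}|=\om_n$ and, by Lemma \ref{apriori}, the a priori bound $u\ge\delta 1_{\{u>0\}}$; these facts feed into Step 4 (regularity) and the subsequent proofs. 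The bookkeeping that makes the "$C\eps$" losses in volume and energy simultaneously vanish in the limit — in particular ensuring the volume constraint is met in the limit and not merely approximately — is the technical heart of the argument.
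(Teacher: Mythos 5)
Your outline follows the paper's proof faithfully through the preparatory steps: truncation via Lemma \ref{apriori} to get $u_i\ge\delta 1_{\{u_i>0\}}$, concentration via Lemma \ref{nucl} plus pigeonhole and translation so a fixed cube carries a definite amount of $\{u_i=1\}$, and $SBV$ compactness to extract a nontrivial limit $u$. But the step you explicitly defer --- how to make the constraint $|\{u=1\}|\ge\om_n$ hold \emph{exactly} in the limit, not just up to an $\eps$-loss --- is precisely where the paper has a nontrivial mechanism, and the two repairs you propose do not close the gap. A ``slight rescaling'' (dilation) of the limit enlarges $|\{u=1\}|$ but simultaneously enlarges $|\{u>0\}|$; if the support constraint $|\{u>0\}|\le M$ is already saturated (and a priori it may be, since $\{u>0\}$ need not lose measure in the limit even when $\{u=1\}$ does), this breaks the other constraint. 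And the diagonal argument over $\eps_k\to 0$ just reproduces the original difficulty: you would again have a sequence of near-admissible near-minimizers and would need to extract a genuinely admissible limit.

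The paper resolves this differently, and the repair happens \emph{along the sequence} rather than on the limit. First it splits into a ``loose'' case (some of the support escapes, so that the limit has strict volume slack, and the truncated sequence $v_i$ already stays admissible for $\eps$ small enough) and a ``saturated'' case (the indicator $1_{\{u_i=1\}}$ converges weak-$*$ to $1_{\{u=1\}}$ on a large cube). In the saturated case, the crucial tool is the volume-exchange vector field from Lemma \ref{lem_volumeexchange}: it produces a compactly supported diffeomorphism $\phi_t(x)=x+t\xi(x)$ whose flow increases $|\{u_i=1\}|$ at rate $1$ while changing $|\{0<u_i<1\}|$ only at order $t^2$. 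Composing the truncated $v_i$ with $\phi_{t_i}^{-1}$ (with $t_i$ of order $\eps$) followed by a small dilation produces a competitor $w_i$ that satisfies \emph{both} measure constraints exactly, has uniformly bounded support, and whose energy exceeds that of $v_i$ by at most $C t_i$; since Lemma \ref{trunc} gave a decrease of order $|\cdot|^{1-1/n}$, which dominates the linear loss for $\eps$ small, $(w_i)$ is still minimizing, and compactness then yields an honest minimizer. Without something playing the role of this measure-exchanging diffeomorphism --- a deformation that trades $\{u=0\}$ volume into $\{u=1\}$ volume while keeping $\{0<u<1\}$ essentially fixed --- your argument cannot simultaneously enforce both the lower bound on $|\{u=1\}|$ and the upper bound on $|\{u>0\}|$, so the proposal as written has a genuine gap at its acknowledged ``technical heart.''
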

\begin{proof} 
Consider a minimizing sequence $(u_i)$; up to truncation we may suppose that $\inf_{\{u_i>0\}}(u_i)\geq \delta_{n,\Theta,m,M}$. We may apply Lemma \ref{nucl}  with $\eps=m/2$ to find sequences $(p_i^k)_{1\leq k\leq N,i\geq 0}$ such that $|\{u_i>0\}\setminus \cup_{k}K_{p_i^k}|\leq \frac{\om_n}{2}$. In particular, $|\{u_i=1\}\cap \cup_{k}K_{p_i^k}|\geq \frac{\om_n}{2}$ so for each $i$ there is some $k_i$ such that $|K_{p_i^{k_i}}\cap\{u= 1\}|\geq \frac{\om_n}{2N}$. Up to a translation we will suppose that $p_{i}^{k_i}=0$, such that $K_{p_i^{k_i}}=[0,1]^n$.\bigbreak

Now, by compactness arguments in $SBV$, see theorem \ref{th_sbv_compactness}, we know that up to an extraction $(u_i)$ converges almost everywhere (with lower semicontinuity on the energy) to a non-trivial limit $u\in SBV$, in particular such that $|\{u=1\}|>0$ and Fatou's lemma tells us that for any cube $K$, $|\{u=1\}\cap K|\geq \limsup_{i\to\infty}|\{u_i=1\}\cap K|$. We also denote $R:=(2M)^{1/n}$ to be such that $|[0,R]^n\cap\{u=0\}|\geq M$.

\begin{lemma}
Up to extraction, either $|\{u>0\}\cap[0,R]^n|>\limsup_{i\to\infty}|\{u_i>0\}\cap[0,R]^n|$, which we call the loose case, or
\[1_{\{u_i=1\}\cap [0,R]^n}\to 1_{\{u=1\}\cap[0,R]^n}\]
in the weak-$*$ sense, which we call the saturated case.
\end{lemma}
\begin{proof}
Suppose that $|\{u>0\}\cap[0,R]^n|=\limsup_{i\to \infty}|\{u_i>0\}\cap[0,R]^n|$. We denote $\nu$ a weak limit of the sequence of measures given by the density $1_{\{u_i=1\}}$; by hypothesis $\nu([0,R]^n)=|\{u=1\}\cap [0,R]^n|$. Moreover for any nonnegative continuous function $\varphi\in\mathcal{C}^{0}([0,R]^n,\R)$, by Fatou's lemma,
\[\nu(\varphi)=\lim_{i\to\infty}\int_{[0,R]^n}\varphi 1_{\{u_i=1\}}\dx\leq \int_{[0,R]^n}\varphi\limsup_{i\to\infty} 1_{\{u_i=1\}}\dx \leq \int_{[0,R]^n}\varphi1_{\{u=1\}}\dx,\]
which concludes the lemma.
\end{proof}

We now let $\eps>0$ to be a small number (it will be fixed later), and we find a union of $N(=N_{n,\Theta,m,M,\eps}$) cube (we also include cubes to cover $[0,R]^n$), denoted $F_i$, such that $|\{u_i>0\}\setminus F_i|\leq \eps$; by applying  Lemma \ref{trunc} to $(u_i,F_i)$, we find a radius $r_i\leq C_{n,\Theta,M}|\{u_i>0\}\setminus F_i|^{\frac{1}{n}}$ such that
\[\E_{\Theta}(u_i1_{\{d_{F_i}<r_i\}})\leq \E_{\Theta}(u_i)-\tau_{n,\Theta,M}|\{u_i>0\}\cap\{d_{F_i}>r_i\}|^{1-\frac{1}{n}}.\]
We let $v_i=u_i1_{\{d_F<r_i\}}$, and we now differentiate between the loose and the saturated case.
\begin{itemize}[label=\textbullet]\setlength\itemsep{1em}
\item Loose case: here we may choose $\eps<|\{u=1\}\cap [0,R]^n|-\limsup_{i\to\infty}|\{u_i=1\}\cap[0,R]^n|$. Since the support of $v_i$ is on a finite (not depending on $i$) number of cubes, they may be moved around so that $v_i$ is supported in a compact set. Then by $SBV$ compactness theorem \ref{th_sbv_compactness} we obtain that $v_i\to v$, such that  $\E_{\Theta}(v)\leq\liminf_{i\to\infty}\E_{\Theta}(v_i)=\inf_{|\{u=1\}|\geq\om_n,|\{u>0\}|\leq M}\E_\Theta(u)$. Moreover, $|\{v>0\}|=\lim_{i\to\infty}|\{v_i>0\}|\leq M$ and
\begin{align*}
|\{v=1\}|&\geq |\{u=1\}\cap[0,R]^n|-\limsup_{i\to\infty}|\{u_i=1\}\cap [0,R]^n|+\limsup_{i\to\infty}|\{v_i=1\}|\\
&\geq  |\{u=1\}\cap[0,R]^n|-\limsup_{i\to\infty}|\{u_i=1\}\cap [0,R]^n|-\eps +\om_n\\
&\geq \om_n,
\end{align*}
so $v$ is admissible and this proves the result.
\item Saturated case: based on the partition $\{u=0\},\{0<u<1\},\{u=1\}$ of $[0,R]^n$, where the first and last set have positive measure, there exists a vector field $\xi\in\mathcal{C}^\infty_c((0,R)^n,\Rn)$ such that
\[\int_{\{u=1\}}\dv(\xi)\dx=1,\ \int_{\{0<u<1\}}\dv(\xi)\dx=0,\ \int_{\{u=0\}}\dv(\xi)\dx=-1.\]
Moreover notice that $\phi_t(x):=x+t\xi(x)$ is a diffeomorphism with compact support for any small enough $t$; $\phi_t$ will be used to regulate the measure of $\{u_i=1\}$ after truncation. Using the weak convergence of the measure of the supports of the $(u_i)$, we may suppose that for any large enough $i$ and any small enough $t$ (not depending on $i$)
\begin{align*}
|\{u_i\circ\phi_t^{-1}=1\}|&=\int_{\{u_i=1\}}\det(D\phi_t)\dx=\int_{\{u_i=1\}}\left(1+t\dv(\xi)+t^2P_\xi(t)\right)\dx\\
&\geq |\{u_i=1\}|+\frac{t}{2},\\
|\{0<u_i\circ\phi_t^{-1}<1\}|&=\int_{\{0<u_i<1\}}\det(D\phi_t)\dx=\int_{\{0<u_i<1\}}\left(1+t\dv(\xi)+t^2P_\xi(t)\right)\dx\\
&\leq |\{0<u_i<1\}|+Ct^2.
\end{align*}
where $P_\xi$ is a polynomial of degree $n-2$ that depends on $\xi$.

Now, let $t_i=8(\om_n-\{v_i=1\})$. We know that $t_i= 8|\{u_i>0\}\setminus \{d_{F_i}>r_i\}|\leq 8\eps$; when $\eps$ is small enough,  for any $|t|<8\eps$ we know $\phi_t$ is a diffeomorphism and the estimates above hold for any large enough $i$. Consider
\[w_i(x)=v_i\circ \phi_{t_i}^{-1}\left( \left[\frac{\om_n}{\om_n-\frac{1}{4}t_i}\right]^{1/n}x\right).\]
Then
\begin{align*}
|\{w_i=1\}|&=\frac{\om_n-\frac{1}{4}t_i}{\om_n}|\{v_i\circ\phi_{t_i}^{-1}=1\}|\geq \frac{\om_n-\frac{1}{4}t_i}{\om_n}\left(|\{v_i=1\}|+\frac{1}{2}t_i\right),\\
&\geq |\{v_i=1\}|+\frac{t_i}{8}=\om_n\\
|\{0<w_i<1\}|&=\frac{\om_n-\frac{1}{4}t_i}{\om_n}|\{0<v_i\circ\phi_{t_i}^{-1}<1\}|\leq \frac{\om_n-\frac{1}{4}t_i}{\om_n}\left(|\{0<v_i<1\}|+Ct_i^2\right)\\
&\leq |\{0<v_i<1\}|\leq |\{0<u_i<1\}|,
\end{align*}
where in both lines we use that $t_i$ is taken arbitrarily small (less than $8\eps>0$, not depending on $i$). Thus $(w_i)$ is admissible. Then it may be computed with a similar method that
\begin{align*}
\E_{\Theta}(w_i)-\E_{\Theta}(v_i)&=\int_{\{\xi\neq 0\}}\left(\left|\left((I+t_iD\xi)^{*}\right)^{-1}\nabla u_i\right|^2\det(I_n+t_iD\xi)-|\nabla u|^2\right)\dx\\
&+\int_{J_u\cap\{\xi\neq 0\}}\left(\Theta(u_i^+)+\Theta(u_i^-)\right)\left(\nu_{J_{u_i}}^* (I_n+t_i D\xi)\nu_{J_{u_i}}-1\right)\ds\\
&\leq C_0t_i\text{ where }C_0\text{ does not depend on }i.
\end{align*}

And then, due to Lemma \ref{trunc},
\[\E_{\Theta}(w_i)\leq \E_{\Theta}(u_i)+8C_0|\{u_i>0\}\setminus \{d_{F_i}>r_i\}|-\tau_{n,\Theta,M}|\{u_i>0\}\cap\{d_{F_i}>r_i\}|^{1-\frac{1}{n}}\leq \E_{\Theta}(u_i),\]
where the last inequality is due to $|\{u_i>0\}\cap\{d_{F_i}>r_i\}|\leq \eps$ and $\eps$ is chosen small enough (depending on the flow, which was defined before $\eps$). So $(w_i)$ is an admissible minimizing sequence that is confined in a disjoint union of $N$ unit cubes, and up to moving these cubes we may suppose that $w_i$ has support in a certain ball $B$ not depending on $i$. So with the compactness result \ref{th_sbv_compactness}, it converges to a minimizer.
\end{itemize}

\end{proof}

\begin{lemma}
Let $u$ be a relaxed minimizer of \eqref{bnnt01.r}. Then $J_u$ is $\Hs$-essentially closed, and there exists a bounded open set $\Om$ with $\partial\Om=\overline{J_u}$ and a relatively closed set $K\subset\Om$ such that $(K,\Om)$ is a solution of \eqref{bnnt01}, associated to the function $u$, with $u\in H^1(\Om)\cap\mathcal{C}^{0,\frac{2}{n+2}}_\loc(\Om)$ and $K=\{u=1\}$.
\end{lemma}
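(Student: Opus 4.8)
The plan is to establish three things in order: (i) $J_u$ is $\Hs$-essentially closed, so that $\Om := \Rn \setminus \overline{J_u}$ is a well-defined open set with $\partial\Om = \overline{J_u}$; (ii) the restriction of $u$ to $\Om$ lies in $H^1(\Om)$ and is locally Hölder continuous with the stated exponent; and (iii) the set $K := \{u=1\}$ is relatively closed in $\Om$ and the pair $(K,\Om)$ attains the infimum in \eqref{bnnt01} with $u$ as associated state function. The first and third points are the heart of the matter, and the essential closedness of $J_u$ is the main obstacle.

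For step (i), the strategy is to compare $u$ with competitors obtained by \emph{filling in} small pieces of the jump set. Concretely, on a small ball $B_r(x)$ one replaces $u$ by a function $\tilde u$ that has no jump inside $B_r(x)$ (for instance, the harmonic extension of the trace of $u$ on $\partial B_r(x)$, suitably truncated into $[0,1]$, or simply a constant), while keeping the measure constraints in check via the volume-exchange field of Lemma \ref{lem_volumeexchange} applied on a fixed far-away region where $0 < u < 1$. This exhibits $u$ as an \emph{almost-quasiminimizer of the Mumford--Shah energy}: there are constants $\Lambda_0, r_0$ such that for every ball $B_r(x)$ with $r \le r_0$,
\[
\int_{B_r(x)} |\nabla u|^2 \dx + \Hs(J_u \cap B_r(x)) \le \int_{B_r(x)} |\nabla v|^2 \dx + \Hs(J_v \cap B_r(x)) + \Lambda_0 r^n
\]
for all competitors $v$ agreeing with $u$ outside $B_r(x)$ — the volume correction being absorbed into the $\Lambda_0 r^n$ term since moving $O(r^n)$ of mass costs $O(r^n)$ of energy. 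Here the a priori bound $u \ge \delta 1_{\{u>0\}}$ from Lemma \ref{apriori} is what lets us bound $\Theta(u_\pm)$ from below by $\Theta(\delta)>0$ on $J_u$, so that the generalized surface term controls $\Hs(J_u)$ up to a multiplicative constant, which is exactly what licenses the comparison with the pure Mumford--Shah functional. Once $u$ is an almost-quasiminimizer, the classical De Giorgi--Carriero--Leaci / Ambrosio regularity theory (as in \cite{AFP00}) gives that $J_u$ is $\Hs$-essentially closed and that $u$ is $\mathcal{C}^{0,\frac{2}{n+2}}_\loc$ away from $\overline{J_u}$; the boundedness of $\Om$ follows from the concentration and cut-off Lemmas \ref{nucl} and \ref{trunc}, which already forced the minimizer to have support in a fixed ball.

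For steps (ii)--(iii): $u \in H^1(\Om)$ because $\int_{\Rn}|\nabla u|^2 < \infty$ and $\Om$ has finite measure, while the rectifiability and finiteness of $\Hs(\partial\Om) = \Hs(\overline{J_u})$ come from the essential closedness together with $\Hs(J_u) \le \Theta(\delta)^{-1}\E_\Theta(u) < \infty$. That $K = \{u=1\}$ is relatively closed in $\Om$ follows from the interior continuity of $u$ in step (i). Finally, to see $(K,\Om)$ solves \eqref{bnnt01} with state function $u$: by the generalized definition of $E_\Theta(K,\Om)$ given just before Lemma \ref{apriori}, $u$ is admissible for that infimum and $E_\Theta(K,\Om) \le \E_\Theta(u)$; conversely any competitor $v$ for $E_\Theta(K,\Om)$ has $|K \setminus \{v=1\}| = 0$, hence $|\{v=1\}| \ge |K| \ge \om_n$ (the last from $u$ being admissible in \eqref{bnnt01.r}) and $|\{v \ne 0\}| \le |\Om| \le M$, so $v$ is admissible in \eqref{bnnt01.r} and $\E_\Theta(v) \ge \E_\Theta(u)$; taking the infimum over such $v$ gives $E_\Theta(K,\Om) \ge \E_\Theta(u)$, so equality holds and the relaxed minimum equals the constrained minimum. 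I expect the one genuinely delicate point to be verifying the almost-quasiminimality inequality \emph{uniformly} while simultaneously respecting both volume constraints — one must be careful that the volume-correcting flow is supported in a region disjoint from the small ball $B_r(x)$ and that its cost is genuinely $O(r^n)$ and not larger.
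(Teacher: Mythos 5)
Your overall architecture matches the paper's: establish almost-quasiminimality of the Mumford--Shah type via a volume-correcting flow, invoke essential closedness of $J_u$, derive interior regularity, and then identify the relaxed minimizer with a classical pair $(K,\Om)$. The identification step and the use of $u\ge\delta 1_{\{u>0\}}$ to compare the $\Theta$-surface term with $\Hs(J_u)$ are exactly right. However, two points are glossed over in a way that matters.

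First, the volume-exchange region. You write that Lemma \ref{lem_volumeexchange} is applied ``on a fixed far-away region where $0<u<1$.'' That lemma produces vector fields redistributing mass between the cells of a partition; to restore \emph{both} constraints $|\{v=1\}|\ge\om_n$ and $|\{v>0\}|\le M$ you must transfer mass among all three sets $\{u=0\},\ \{0<u<1\},\ \{u=1\}$, so the region $U$ has to meet all three with positive measure. The paper accordingly takes $U$ to be a union of two small balls satisfying \eqref{eq_condball1}, builds the two-parameter flow $\Phi_{s,t}$, and uses the inverse function theorem on the map $G(s,t)$ to hit any small prescribed pair of volume defects. A second region $U'$ disjoint from $\overline{U}$ is then needed so that the quasiminimality inequality also holds on balls that meet $U$; your sketch omits this, although you do flag the disjointness issue at the end.

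Second, and more importantly, the claim that $u\in\mathcal{C}^{0,\frac{2}{n+2}}_\loc(\Om)$ is ``what the classical De Giorgi--Carriero--Leaci / Ambrosio theory gives'' is not accurate. That theory (the reference the paper uses is \cite[Theorem 3.1]{BL14}) yields the $\Hs$-essential closedness of $J_u$, but the \emph{interior} regularity of $u$ on $\Om$ is a separate matter: here $u$ is not harmonic in $\Om$, because of the obstacle $u\le 1$ which produces a free boundary $\partial\{u=1\}\cap\Om$ of Alt--Caffarelli type. The paper proves the interior Hölder bound by a fresh, self-contained comparison: on a ball $B_{x,R}\Subset\Om$ disjoint from a fixed interior ball $B$ (in which a flow $\phi_t$ exchanges mass between $K$ and $\Om\setminus K$), one replaces $u$ by the harmonic extension $h$ of its trace, restores admissibility via $h\circ\phi_t^{-1}$ with $t=2|B_R|$, and obtains $\int_{B_{x,R}}|\nabla(u-h)|^2\le CR^n$. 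Combining this with Caccioppoli for $h$ gives $\int_{B_{x,r}}|\nabla u|^2\le C(R^n+r^nR^{-2})$, and the exponent $\frac{2}{n+2}$ appears precisely from the optimal choice $R=r^{n/(n+2)}$. So the exponent is an artifact of this one-step argument, not a standard output of the quasiminimizer theory; if you want to invoke the quasiminimality inequality \eqref{eqQuasimin} instead, you must run the Campanato iteration yourself, and you should not cite it to the Mumford--Shah literature.

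A minor point: the boundedness of $\Om$ can indeed be read off the construction of the minimizer in Proposition \ref{prop:existence} as you suggest, whereas the paper re-derives it from a lower density estimate extracted from \eqref{eqQuasimin}; both are fine.
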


\begin{proof}

 We assume without loss of generality that $|\{0<u<1\}|>0$; otherwise the minimizer is directly identified by the isoperimetric inequality.
\begin{itemize}[label=\textbullet]\setlength\itemsep{1em}
\item We first prove that $u$ is an almost quasiminimizer of the Mumford-Shah functionnal, meaning that there are constants $c_u,r_u>0$ such that for any ball $B_{x,r}$ with $r<r_u$ and any function $v\in SBV(\Rn)$ that differs from $u$ on $B_{x,r}$ only,
\begin{equation}\label{eqQuasimin}
\int_{B_{x,r}}|\nabla u|^2\dx+\Theta(\delta)\Hs(B_{x,r}\cap J_u)\leq \int_{B_{x,r}} |\nabla v|^2\dx+2\Theta(1)\Hs(B_{x,r}\cap J_v)+c_u|\{u\neq v\}|.
\end{equation}

Let $U$ be the union of two balls of arbitrarily small radius $\delta$ such that 
\begin{align}
 &|U\cap\{u= 1\}|,\ |U\cap \{0<u<1\}|,\ |U\cap\{u=0\}|>0\label{eq_condball1}\\
 &|U^c\cap\{u= 1\}|,\ |U^c\cap \{0<u<1\}|,\ |U^c\cap\{u=0\}|>0.\label{eq_condball2}
\end{align}
The second condition \eqref{eq_condball2} is automatic as soon as $\delta$ is small enough, and the first condition \eqref{eq_condball1} may be obtained by continuity of $x\mapsto |B_{x,\delta}\cap A|$ for $A=\{u=1\},\{0<u<1\},\{u=0\}$.
We may apply lemma \ref{lem_volumeexchange} for these three sets in $U$, thus obtaining three smooth vector fields $(X,Y,Z)$ with support in $U$ that transfer measure between $(\{u=0\},\{0<u<1\},\{u=1\})$, in particular
\begin{align*}
\int_{\{u=1\}}\dv(X)=-\int_{\{u=0\}}\dv(X)=1,\ &\int_{\{0<u<1\}}\dv(X)=0\\
\int_{\{u=1\}}\dv(Y)=-\int_{\{u=0\}}\dv(Y)=1,\ &\int_{\{0<u<1\}}\dv(Y)=0\\
\end{align*}
Write $\phi_t(x)=x+tX(x)$, $\psi_t(x)=x+tY(x)$, and $\Phi_{s,t}=\phi_s\circ\psi_t$. Consider the application
\[G:\begin{cases}\R^2\to \R^2\\ \left(s,t\right)\mapsto \left(\left|\Phi_{s,t}(\{u= 1\})\right|-\left|\{u= 1\}\right|,\left|\Phi_{s,t}(\{0<u<1\})\right|-\left|\{0<u<1\}\right|\right).\end{cases}\]
Then $G$ is smooth and $DG(0,0)=I_2$; there exists $\eps_0>0$ such that $G$ is invertible on $B_{\eps_0}$ with $\frac{1}{2}|\left(s,t\right)|\leq |G\left(s,t\right)|\leq 2|\left(s,t\right)|$ and $G(B_{\eps_0})\supset B_{\eps_0/2}$. Let $r>0$ and $x\in\Rn$ be such that $|B_{x,r}\cap U|=0$ and $|B_{x,r}|<\eps_0/4$, and $v\in SBV$ be such that $\{u\neq v\}\Subset B_{x,r}$.  Up to truncating $v$ from above (by $1$, that can only decrease the energy) and from below (by some $t\geq\delta$, by lemma \ref{apriori}) we assume that $\delta 1_{\{v>0\}}\leq v\leq 1$.\bigbreak

Let $(a,b)=(|\{u=1\}|-|\{v=1\}|,|\{0<u<1\}|-|\{0<v<1\}|)(\in B_{\eps_0/2})$, and $\left(s,t\right)=G_{|B_{\eps_0}}^{-1}(a,b)$. Then $v\circ \Phi_{s,t}^{-1}$ satisfies the measure constraints $(\om_n, M)$, so
\[\E_{\Theta}(u)\leq E_{\Theta}(v\circ \Phi_{s,t}^{-1}).\]
So
\[\int_{B_{x,r}}|\nabla u|^2\dx+\Theta(\delta)\Hs(B_{x,r}\cap J_u)\leq \int_{B_{x,r}} |\nabla v|^2\dx+2\Theta(1)\Hs(B_{x,r}\cap J_v)+R,\]
where
\begin{align*}
R&=\int_{U}\left(\left|(D\Phi_{s,t}^*)^{-1}\nabla u\right|^2\det(D\Phi_{s,t})-|\nabla u|^2\right)\dx\\
&+\int_{U\cap J_u}\left(\nu_u^\bot D\Phi_{s,t}\nu_u^\bot-1\right)\left(\Theta(u_-)+\Theta(u_+)\right)\ds\\
&\leq C(|s|+|t|)\leq 2C(|a|+|b|).
\end{align*}
 This proves that $u$ is an almost quasiminimizer at any positive distance of $U$. Now by our choice of $U$, the condition \eqref{eq_condball2} allows us to choose similarly a set $U'\subset \Rn\setminus\overline{U}$ that is a union of two balls of arbitrarily small radii such that
\[|U'\cap\{u= 1\}|,\ |U'\cap \{0<u<1\}|,\ |U'\cap\{u=0\}|>0\]
and by choosing similarly vector fields $X',Y',Z'$ with support in $U'$ that transit measure between these three sets, $u$ is an almost quasiminimizer at any positive distance of $U'$. Thus $u$ is an almost quasiminimizer, and this concludes the proof of the estimate \eqref{eqQuasimin}.
\item By \cite[Theorem 3.1]{BL14}, $u$ being an almost quasiminimizer of Mumford-Shah implies that $\overline{J_u}$ is essentially closed, meaning $\Hs(\overline{J_u}\setminus J_u)=0$. We now let $\Om$ be the union of the bounded connected components of $\Rn\setminus \overline{J_u}$, then $u\in H^1(\Om)$ and $\partial\Om=\overline{J_u}$.
\item Let us prove $\Om$ is bounded by proving an explicit lower density estimate; indeed, we let $r:=r_u$ as defined in the first point and considering $v=u1_{\Rn\setminus B_{x,\rho}}$ for any $x\in\Rn$ and $\rho\in ]0,r[$ in \eqref{eqQuasimin} we get
\[\int_{B_{x,\rho}}|\nabla u|^2\dx+\Theta(\delta)\Hs(B_{x,\rho}\cap J_u)\leq 2\Theta(1)\Hs(\partial B_{x,\rho}\cap \{u>0\}\setminus J_u)+c_u |B_{x,\rho}\cap \{u>0\}|.\]
Let $m(\rho)=|\Om\cap B_{x,\rho}|$, then by the isoperimetric inequality
\[c_n m(\rho)^{1-\frac{1}{n}}\leq \Hs(\partial\Om\cap B_\rho)+\Hs(\Om\cap \partial B_\rho)\leq \left(1+2\frac{\Theta(1)}{\Theta(\delta)}\right)m'(\rho)+c_u m(\rho).\]
So if $|B_{x,r/2}\cap\Om|>0$ then by integrating this estimate, $|B_{x,r}\cap \Om|\geq cr^n$ for a constant $c$ that does not depend on $x$. Since $|\Om|\leq M$ then there are at most $N\leq\frac{M}{cr^n}$ points $(x_i)_{i=1,\hdots,N}$ such that $|x_i-x_j|\geq r$ for all $i\neq j$ and $|\Om\cap B(x_i,r/2)|>0$, meaning that $\Om$ is bounded.
\item We prove that $u$ is locally Hölder, which implies the relative closedness of $K$ with $K=\{u=1\}$. 
Indeed let $K=\{u=1\}$ and $B\Subset\Om$ be a small ball inside $\Om$ such that $|B\cap K|$ and $|B\setminus K|$ are positive (such a ball exists as soon as $|\Om|>|K|$, otherwise $K$ and $\Om\setminus K$ would be disconnected, and $(K,K)$ would have strictly lower energy than $(K,\Om)$). Let $\xi\in\mathcal{C}_c^{\infty}(B,\R^n)$ be such that
\[\int_{B\cap K}\dv(\xi)\dx=1,\ \int_{B\setminus K}\dv(\xi)\dx=-1,\]
and let $\phi_t(x)=x+t\xi(x)$ be the associated diffeomorphism for a small enough $t$. Consider then any ball $B_{x,r}\Subset \Om$ such that
\[B_{x,r^{\frac{n}{n+2}}}\Subset \Om\setminus B,\]
we prove that provided $r$ is small enough (depending only on the choice of the flow and the parameters of the problem and not on $x$) there is a constant $C>0$ not depending on $x$ and $r$ such that
\[\int_{B_{x,r}}|\nabla u|^2\dx\leq Cr^{\frac{n^2}{n+2}}.\]
This directly implies the local Hölder continuity in $\Om\setminus B$ and the relative closedness of $K$ using classical integral growth argument (see for instance \cite[cor. 3.2]{HL11}), and the same may be done for another small ball $B'$ that has positive distance from $B$ which conclude the result. Let us now focus on this estimate.\bigbreak

Let $R=r^{\frac{n}{n+2}}$ and $h$ be the harmonic extension of $u_{|\partial B_{x,R}}$ on $B_{x,R}$ (that we extend simply by $u$ outside $B_{x,R}$). We suppose that $r$ is small enough such that $r<\frac{R}{2}$. Notice that $h$ might not be admissible, however $|\{h=1\}|\geq \om_n-|B_{R}|$; we let $t=2|B_{R}|$ and for a small enough $R$, we have
\[|\{h\circ \phi_t^{-1}=1\}|\geq \om_n,\]
so $h\circ\phi_{t}^{-1}$ is admissible. By comparison with the minimizer $u$ we get
\[\int_{B_{x,R}}|\nabla (u-h)|^2\dx\leq \int_{B}\left(\left|\left((I+tD\xi)^*\right)^{-1}\nabla u\right|^2\det(I+t D\xi)-|\nabla u|^2\right)\dx\leq CR^n.\]
for some constant $C$ that depends on $n$, $u_{|B}$ and on the flow $\xi$. Now, using the subharmonicity of $|\nabla h|^2$ on $B_{x,R}$,
\begin{align*}
\int_{B_{x,r}}|\nabla u|^2\dx&\leq 2\int_{B_{x,r}}|\nabla (u-h)|^2\dx+2\int_{B_{x,r}}|\nabla h|^2\dx\\
&\leq  CR^n+2\left(\frac{r}{R/2}\right)^n\int_{B_{x,R/2}}|\nabla h|^2\dx\\
&\leq CR^n+C'\frac{r^n}{R^2}\text{ by Cacciopoli inequality on the second term}.
\end{align*}
This ends the proof.

\end{itemize}

\end{proof}

\begin{remark}\label{bnnt22} 
\rm
Summarizing our results, we know that $\Om$ is an open set with rectifiable topological boundary such that $\Hs(\partial\Om)<\infty$ and  that the temperature $u\in H^1(\Om)$ is   $\mathcal{C}^{0,\frac{2}{n+2}}_\loc(\Om)$.  
\end{remark}
\section{The penalized problem: proof of Theorem  \ref{bnnt04}}\label{bnnt08}
In this section we prove Theorem  \ref{bnnt04}. 
For any $\Lambda>0$, we denote
\[E_{\Theta,\Lambda}(K,\Om)=E_{\Theta}(K,\Om)+\Lambda|\Om\setminus K|,\ \E_{\Theta,\Lambda}(u)=\E_{\Theta}(u)+\Lambda|\{0<u<1\}|.\]

We claim that it is enough to prove the result in the case for a function $\Theta$  which satisfies \eqref{HypTheta} and such that $\Theta(u)=\mathcal{O}_{u\to 0}(u^2)$. Indeed, for any small $\eps>0$ one may replace $\Theta$  on $[0,1]$ with a perturbation that verifies \eqref{HypTheta}, namely
\[\Theta^\eps(u)=\min\left((\Theta(1)+\eps)\left(\frac{u}{\eps}\right)^2,\Theta(u)+ \eps 1_{(0,1]}\right)\]
and define $E_{\Theta^\eps}$, $E_{\Theta^\eps,\Lambda}$, accordingly. Then  we know $E_{\Theta^\eps,\Lambda}$ is minimal on some $(B_1,B_{R^\eps})$, and  that the associated state function has the form
\[u^\eps(x)=\begin{cases}1 & \text{ if }|x|\leq 1,\\
1-\left(1- u^\eps(R^\eps)\right)\frac{\Phi_n(|x|)-\Phi_n(1)}{\Phi_n(R^\eps)-\Phi_n(1)} & \text{ if }1\leq |x|\leq R^\eps,\\
0&\text{ if }|x|> R^\eps,\end{cases}\]
 due to the general form of radial harmonic functions ($x\mapsto a+b\Phi_n(|x|)$). The associated energy takes the form
\begin{align*}
E_{\Theta^\eps,\Lambda}(B_1,B_{R^\eps})&=\left(1-u^\eps(R^\eps)\right)^2\frac{\Per(B_r)\Phi_n'(1)}{\Phi_n(R^\eps)-\Phi_n(1)}+\Per(B_{R^\eps})\Theta^\eps\left(u^\eps(R^\eps)\right).
\end{align*}

 $R^\eps$ is bounded uniformly in $\eps$; indeed, due to the penalization term,
\[|B_{R^\eps}|\leq \Lambda^{-1}E_{\Theta^\eps}(B_1,B_{R^\eps})\leq\Lambda^{-1} E_{\Theta^\eps}(B_1,B_1)=\Lambda^{-1}(\Theta(1)+\eps)\Per(B_1)\]
so we may suppose without loss of generality that $R^\eps\goto R$ and  $u^\eps(R^\eps)\goto l\in [0,1]$, and define
\[u(x)=\begin{cases}1 & \text{ if }|x|\leq 1,\\
1-\left(1- l\right)\frac{\Phi_n(|x|)-\Phi_n(1)}{\Phi_n(R)-\Phi_n(1)} & \text{ if }1<|x|< R,\\
0&\text{ if }|x|> R,\end{cases}\]
Then by lower semicontinuity of $\Theta$ and the fact that $\Theta(0)=0$ we get
\begin{align*}
E_{\Theta,\Lambda}(B_1,B_R)&\leq \E_{\Theta,\Lambda}(u)\leq \liminf_{\eps\to 0}E_{\Theta^\eps,\Lambda}(B_1,B_{R^\eps})
\end{align*}
and then for any admissible $v $ we get by dominated convergence
\[\E_{\Theta,\Lambda}(v)=\lim_{\eps\to 0}\E_{\Theta^\eps,\Lambda}(v)\geq \liminf_{\eps\to 0}E_{\Theta^\eps,\Lambda}(B_1,B_{R^\eps})\geq E_{\Theta,\Lambda}(B_1,B_R)\]

This is a similar method as what was used in \cite{BGN20} to handle a similar function with $\Theta(v)\sim 2\beta cv$ near $0$ for some constants $c,\beta>0$. 
\begin{lemma}
Problem \eqref{bnnt02} has a solution.
\end{lemma}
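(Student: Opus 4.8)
The plan is to prove existence of a minimizer for the penalized problem \eqref{bnnt02} by the direct method in the $SBV$ relaxation, mimicking the structure of Proposition \ref{prop:existence} but in the arguably simpler penalized setting, where only one measure constraint ($|\{u=1\}|\ge\om_n$) survives. First I would recast the problem as minimizing $\E_{\Theta,\Lambda}(u)=\E_\Theta(u)+\Lambda|\{0<u<1\}|$ over $u\in SBV(\R^n,[0,1])$ with $|\{u=1\}|\ge\om_n$; as in the remark following Lemma \ref{apriori}, truncation below and above $[0,1]$ shows this relaxed problem has the same value as \eqref{bnnt02}. Note that here, unlike in Proposition \ref{ThExistence}, no upper bound on $|\{u>0\}|$ is imposed, so hypothesis \eqref{HypT2} is \emph{not} needed; instead the penalization term provides an a priori bound $|\{u>0\}|\le\Lambda^{-1}\E_{\Theta,\Lambda}(u)\le\Lambda^{-1}\Theta(1)\Per(B_1)$ along any minimizing sequence (comparing with $(B_1,B_1)$), which plays the role previously played by the constraint.

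Next I would set up a minimizing sequence $(u_i)$ and extract a limit. The a priori lower bound $u_i\ge\delta 1_{\{u_i>0\}}$ of Lemma \ref{apriori} does not immediately apply because that lemma assumed \eqref{HypT2}; however one can either reprove the analogous truncation estimate in the penalized setting (the penalization only helps, since removing $\{s<u<t\}$ decreases $\Lambda|\{0<u<1\}|$ as well), or argue directly: truncating $u_i$ by $u_i 1_{\{u_i>t_i\}}$ for a suitable small $t_i$ does not increase $\E_{\Theta,\Lambda}$, which gives the density estimate $\inf_{\{u_i>0\}}u_i\ge\delta$. Then the concentration Lemma \ref{nucl} and the cut-off Lemma \ref{trunc} apply verbatim (they only used the density estimate and the volume bound, both available here): I would apply Lemma \ref{nucl} with $\eps=\om_n/2$ to confine, after translation, a fixed fraction of the mass of $\{u_i=1\}$ in the unit cube $[0,1]^n$, pass to an a.e.\ limit $u$ via the $SBV$ compactness Proposition \ref{th_sbv_compactness} (with lower semicontinuity of the jump energy and weak-$L^2$ convergence of gradients), and note $|\{u=1\}|>0$ by Fatou.

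The main obstacle, exactly as in the proof of Proposition \ref{prop:existence}, is that mass of $\{u_i=1\}$ can escape to infinity, so the limit $u$ need not satisfy $|\{u=1\}|\ge\om_n$ and $\E_{\Theta,\Lambda}$ may lose mass in the limit. This is handled by the same loose/saturated dichotomy: either $|\{u>0\}|>\limsup_i|\{u_i>0\}|$ strictly on the relevant region (loose case), or the indicator $1_{\{u_i=1\}}$ converges weakly-$*$ to $1_{\{u=1\}}$ on a large cube (saturated case). In the loose case, after applying Lemma \ref{trunc} to cut off the far-away mass at a controlled energy cost and moving the finitely many remaining cubes into a fixed ball, the $SBV$ compactness gives a limit $v$ with $\E_{\Theta,\Lambda}(v)\le\liminf\E_{\Theta,\Lambda}(v_i)\le\inf$ and $|\{v=1\}|\ge\om_n$ — here the penalized setting is actually \emph{easier} because we need not simultaneously preserve an upper volume constraint, so no volume-exchange diffeomorphism $\phi_t$ is needed to readjust $|\{0<u<1\}|$; we only need to restore $|\{u=1\}|\ge\om_n$, which follows since the lost $\{u=1\}$-mass is controlled by $\eps$ and was compensated by the gap. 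In the saturated case, I would use Lemma \ref{lem_volumeexchange} to build a compactly supported vector field $\xi$ transferring measure from $\{u=0\}$ to $\{u=1\}$, use the associated flow $\phi_t$ together with a dilation to restore $|\{w_i=1\}|=\om_n$ exactly (as in the displayed construction of $w_i$ in the proof of Proposition \ref{prop:existence}), estimate $\E_{\Theta,\Lambda}(w_i)-\E_{\Theta,\Lambda}(v_i)\le C_0 t_i$ with $t_i\le 8\eps$, and absorb this into the strict energy gain $-\tau_{n,\Theta,M}|\{u_i>0\}\cap\{d_{F_i}>r_i\}|^{1-1/n}$ from Lemma \ref{trunc} by choosing $\eps$ small. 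Either way we obtain an admissible minimizing sequence confined to a fixed ball, hence a minimizer by Proposition \ref{th_sbv_compactness} and lower semicontinuity (the penalization term $\Lambda|\{0<u<1\}|$ is lower semicontinuous under a.e.\ convergence by Fatou). Finally, arguing as in the Lemma following Proposition \ref{prop:existence}, $J_u$ is essentially closed and $(\{u=1\},\Om)$ with $\Om$ the union of bounded components of $\R^n\setminus\overline{J_u}$ is a genuine solution of \eqref{bnnt02}.
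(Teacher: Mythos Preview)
Your direct-method approach in the $SBV$ relaxation is valid (once one uses the reduction to $\Theta^\eps$ established just before the lemma, so that \eqref{HypTheta} and $\Theta(u)=\mathcal{O}(u^2)$ hold and hence \eqref{HypT2} is automatic for every $M$; your claim that ``\eqref{HypT2} is not needed'' is slightly misleading, since Lemma~\ref{apriori} genuinely uses \eqref{HypTheta} in its proof and your ``the penalization only helps'' does not by itself supply a uniform $\delta$ for general $\Theta$). But the paper takes a substantially shorter and different route.

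Rather than re-running the whole concentration/cut-off/dichotomy machinery, the paper reduces the penalized problem to a \emph{one-dimensional} minimization over the volume parameter. Concretely: since $\Theta^\eps$ satisfies \eqref{HypT2} for every $M$, Proposition~\ref{ThExistence} already furnishes a minimizer $(K^M,\Om^M)$ of the constrained problem \eqref{bnnt01} for each $M\ge\om_n$. One then shows that $M\mapsto E_\Theta(K^M,\Om^M)$ is lower semicontinuous (by running the compactness of Proposition~\ref{prop:existence} on a sequence of constrained minimizers), notes that $E_{\Theta,\Lambda}(K^M,\Om^M)\ge\Lambda(M-\om_n)\to+\infty$, and concludes that the l.s.c.\ function $M\mapsto E_{\Theta,\Lambda}(K^M,\Om^M)$ attains its infimum at some $M_\Lambda$; the pair $(K^{M_\Lambda},\Om^{M_\Lambda})$ is then the desired solution of \eqref{bnnt02}. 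This avoids repeating the loose/saturated analysis and the flow-based volume adjustment that you redo from scratch. Your approach buys self-containment (it would work even without Section~\ref{bnnt07} at hand) at the price of length; the paper's approach buys brevity by leveraging the constrained existence already proved.
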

\begin{proof} 
We let $\Theta^\eps(u) $ defined above. Notice that 
$$\inf_{0<s<1}\frac{\Theta^\eps(s/3)}{\Theta^\eps(s)}\geq \min\left(\frac{1}{9},\inf_{0<s<1} \frac{ \Theta(s/3)+\eps}{\Theta(s)+\eps}\right)\ge \min\left(\frac{1}{9},\frac{ \eps}{\Theta(1)+\eps} \right)>0$$
 and $\Theta^\eps(v)=\mathcal{O}_{s\to 0}(s^2)$ so the hypothesis \eqref{HypT} is automatically verified. For what comes next we drop the $\eps$ to lighten the notations.\\
The results of Section \ref{bnnt07} apply; for any $M\geq \om_n$, there exists a minimizer $(K^M,\Om^M)$ of problem \eqref{bnnt01}. $M \mapsto E_\Theta(K^M,\Om^M)$ is nonincreasing, and we prove that it is lower semicontinuous. Indeed, consider $M_i\underset{i\to\infty}{\longrightarrow}M$, and consider the sequence of minimizers $(K^{M_i},\Om^{M_i})$, with their state function $(u_i)$. Then proceeding as in the proof of Proposition \ref{prop:existence}, we find a modified sequence $(w_i)$ obtained from $(u_i)$ through truncation with uniformly bounded support, such that its limit $w$ verifies $|\{w=1\}|\geq \om_n$, $|\{w>0\}|\leq M$, and $\E_\Theta(w)\leq \liminf_{i\to\infty}\E_\Theta(u_i)$; this prove the lower semi-continuity. Since

\[E_{\Theta,\Lambda}(K^M,\Om^M)\geq \Lambda(M-\om_n)\underset{M\to+\infty}{\longrightarrow}+\infty,\]
then there is a (non-necessarily unique) $M_\Lambda>0$ such that $[\om_n, +\infty) \ni M \mapsto E_{\Theta,\Lambda}(K^M, \Om^M)$ is minimal at $M=M_\Lambda$ and $(K^{M_\Lambda},\Om^{M_\Lambda})$ is a minimum of $E_{\Theta,\Lambda}$ with $|\Om|\le M$.
\end{proof}

We may now prove the main result of this section. We will say a set $A$ has the center property if any hyperplane that goes through the origin divides $A$ in two parts of same measure. This is in particular the case if $A$ has central symmetry. We will also call a minimizer $(K,\Om)$ ``centered'' if $K$ has the center property.
\begin{proof}[Proof of Theorem \ref{bnnt04}]
Consider $u$ a minimizer of $\E_{\Theta,\Lambda}$ with the associated sets $\Om=\{u>0\}$, $K=\{u=1\}$. We remind that we know $\Om$ to be open and $u\in H^1(\Om)$. When $e$ is a unit vector and $\lambda\in\mathbb{R}$, we will write
\[u^+_{e,\lambda}(x)=\begin{cases}
u(x)& \text{ if }x\cdot e\geq \lambda\\
u(S_{e,\lambda}(x))& \text{ if }x\cdot e< \lambda\\
\end{cases},\ 
u^-_{e,\lambda}(x)=\begin{cases}
u(S_{e,\lambda}(x))& \text{ if }x\cdot e\geq \lambda\\
u(x)& \text{ if }x\cdot e< \lambda\\
\end{cases},\]
where $S_{e,\lambda}$ is the reflexion relative to the hyperplane $\{x:x\cdot e=\lambda\}$.
\begin{itemize}[label=\textbullet]\setlength\itemsep{1em}
\item We first show that there exists a minimizer with the center property by building a minimizer with central symmetry. Indeed, consider $u$ a minimizer associated to $(K,\Om)$, $\lambda_1\in\mathbb{R}$ such that $\{x:x_1=\lambda_1\}$ cuts $K$ in half. Then $u_{e_1,\lambda_1}^+$ and $u_{e_1,\lambda_1}^-$ are also minimizers; we may in particular replace $u$ with $u_{e_1,\lambda_1}^+$ or $u_{e_1,\lambda_1}^-$ (the choice does not matter here) to suppose $u$ has a symmetry along $\{x:x_1=\lambda_1\}$. We do the same for $\{x:x_i=\lambda_i\}$ successively for $i=2,\hdots,n$; in the end we arrive to a minimizer
\[\tilde{u}=\left(\left.\hdots\left(u_{e_1,\lambda_1}^\pm\right)_{e_2,\lambda_2}^\pm\right)\hdots\right)_{e_n,\lambda_n}^\pm,\]
that is symmetric relative to every $S_{e_i,\lambda_i}$. Up to a translation of $\lambda$, $\tilde{u}$ is invariant for every $S_{e_i,0}$, and so it is invariant by their composition which is the central symmetry $x\mapsto -x$.

\item Suppose that $u$ is a minimizer (with $K=\{u=1\}$, $\Om=\{u>0\}$) where $K$ has the center property (we know such a minimizer exists from the previous point). We prove that the free boundary $\partial K\cap\Om$ is a union of spherical arcs centered at the origin.

If $\{0<u<1\}$ is empty then the problem is equivalent to the isoperimetric inequality and we are done, so we suppose that it is not. Consider $A$ a connected component of $\{0<u<1\}$, which is open by the regularity of minimizers.
The set $\partial A\cap\partial K$ is not empty, since it would otherwise mean that $\E_{\Theta}(u1_{\Rn\setminus A})<\E_{\Theta}(u)$.

Let $H_e=\{x:x\cdot e=0\}$ be a hyperplane going through the origin such that $H_e\cap A\neq\emptyset$, then $u_{e,0}^{+}$ is also a minimizer; in particular this means by analyticity of $u$ in $A$ that $\nabla u_{|H_e}$ is colinear to $H_e$. Since this is true for every $e$, then $u_{|A}$ is a radial function (restricted to a set $A$ that may not be radial), and since it is harmonic it has an expression of the form
\[u_{|A}(x)=a_A-b_A\Phi_n(|x|),\]
where $\Phi_n$ is the fundamental solution of the Laplacian (taken with the sign convention that it is increasing). Moreover, since $u$ is $1$ on $\partial K\cap\partial A$, we know this set is an arc of circle around $0$, and we call its radius $r_A$. While $r_A$ might depend on the component $A$, notice that the optimality condition for the Alt-Caffarelli problem near $\partial K\cap\Om$ is exactly that for a certain constant $\lambda>0$ that does not depend on $A$,
\[\forall x\in \partial K\cap\partial \{0<u<1\},\ |\nabla u(x)|=\lambda,\]
where $\nabla u(x)$ refers in this case to the limit of $\nabla u(y)$ as $y(\in A)\to x$. With $u(x)=1$ this exactly reduces to
\[\begin{cases}
a_A-b_A\Phi_n(r_A)&=1,\\
-b_A\Phi_n'(r_A)&=\lambda,
\end{cases}\]
so $(a_A,b_A)$ are fully determined by $r_A$.
\item We prove a reflexion lemma that will be useful for successive reflections.
\begin{lemma}
Let $u$ be a centered minimizer. Then for any vector $e$, $u_{e,0}^+$ is also a centered minimizer.
\end{lemma}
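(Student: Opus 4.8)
The plan is to check two things separately: that $u^+_{e,0}$ is again a minimizer of $\E_{\Theta,\Lambda}$, and that $\{u^+_{e,0}=1\}$ again has the center property. Write $H=\{x\cdot e=0\}$, $S=S_{e,0}$, $K=\{u=1\}$ and $K_\pm=K\cap\{\pm\, x\cdot e>0\}$, so that $\{u^+_{e,0}=1\}=K_+\cup S(K_+)$ and $\{u^-_{e,0}=1\}=K_-\cup S(K_-)$. Since $u$ is centered, $H$ bisects $K$, hence $|\{u^\pm_{e,0}=1\}|=2|K_\pm|=|K|=\om_n$, and both $u^\pm_{e,0}\in SBV(\Rn,[0,1])$ are admissible. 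For minimality, observe that $u^+_{e,0}$ and $u^-_{e,0}$ are each continuous across $H$ (their two one-sided traces on $H$ both equal the trace of $u$ from the corresponding side), so no jump is created along $H$; splitting every term of $\E_{\Theta,\Lambda}$ across $H$ and using that $S$ preserves Lebesgue measure and $\Hs$ one obtains
\[\E_{\Theta,\Lambda}(u^+_{e,0})+\E_{\Theta,\Lambda}(u^-_{e,0})=2\,\E_{\Theta,\Lambda}(u)-2\!\int_{J_u\cap H}\!\bigl(\Theta(u_+)+\Theta(u_-)\bigr)\,\ds\ \le\ 2\,\E_{\Theta,\Lambda}(u).\]
As both competitors are admissible, each has energy at least $\min\E_{\Theta,\Lambda}=\E_{\Theta,\Lambda}(u)$, so the inequality is forced to be an equality and $u^+_{e,0}$ is a minimizer (and $\Hs(J_u\cap H)=0$).

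The real content is that $\{u^+_{e,0}=1\}$ again has the center property; this is \emph{not} a consequence of the center property of $K$ alone, and one must feed in the rigidity of centered minimizers established above. Recall that for a centered minimizer, $u$ is radial on each component $A$ of $\{0<u<1\}$, with $u|_A=a_A-b_A\Phi_n(|\cdot|)$, and $\partial K\cap\{0<u<1\}$ is a union of spheres centered at the origin. I would then argue, using in addition the optimality conditions on $\partial\Om$, that the inner radius is in fact independent of $A$, so that $\partial K\cap\{0<u<1\}$ lies on a single sphere $\partial B_r$. Consequently, in every direction $\theta\in\Sp$ meeting $\{0<u<1\}$ the radial slice of $K$ is the segment $(0,r)\theta$, so the radial mass $\mu_K(\theta)=\int_0^\infty 1_K(t\theta)\,t^{n-1}\,\mathrm{d}t$ is constant on that set of directions. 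When $\{0<u<1\}$ meets every direction this already yields $K=B_1$, $\Om$ a concentric ball and $u^+_{e,0}=u$, and the case $\{0<u<1\}=\emptyset$ is immediate from the isoperimetric inequality; in the remaining case one uses $|K|=\om_n$ together with the $S$-symmetry of $\{u^+_{e,0}=1\}$ to check directly that $\mu_{K_+\cup S(K_+)}(\theta)=\mu_{K_+\cup S(K_+)}(-\theta)$ for every $\theta$, which is exactly the center property of $\{u^+_{e,0}=1\}$.

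The step I expect to be the main obstacle is precisely this last one: excluding "Dirichlet patches" (directions where $u$ jumps straight from $1$ to $0$) and directions along which $\{0<u<1\}$ is empty, and then verifying that the radial rigidity of $K$ is inherited by the reflected set $K_+\cup S(K_+)$. Once the lemma is in place, it permits applying the reflection in any direction while remaining inside the class of centered minimizers, which is what the subsequent argument uses to upgrade "radial on each component of $\{0<u<1\}$" to full radial symmetry, i.e. to concentric balls.
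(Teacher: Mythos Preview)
Your argument for minimality of $u^+_{e,0}$ is correct and is exactly what the paper tacitly uses. The substantial divergence is in the proof of the center property, where your route has real gaps while the paper's route sidesteps them entirely.

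Your plan requires first proving that the inner radius $r_A$ is the same for every connected component $A$ of $\{0<u<1\}$, invoking ``the optimality conditions on $\partial\Om$''. At this stage of the paper no first-order condition on $\partial\Om$ has been derived (only the Alt--Caffarelli condition $|\nabla u|=\lambda$ on $\partial K\cap\Om$ is available, and that fixes $(a_A,b_A)$ in terms of $r_A$ but not $r_A$ itself). So this step is unproven. You then reduce the center property of $K_+\cup S(K_+)$ to the antipodal identity $\mu(\theta)=\mu(-\theta)$ for the radial mass; even granting that equivalence, you yourself flag that the directions where $\{0<u<1\}$ is absent (Dirichlet patches, or rays along which $u$ jumps from $1$ to $0$) are uncontrolled, and your proposal does not close this case.

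The paper's argument avoids all of this. It never claims $r_A$ is constant and never analyzes $\partial\Om$. Instead, since $v:=u^+_{e,0}$ agrees with $u$ on $\{x\cdot e\ge 0\}$ and $u$ is a centered minimizer, one can pick a small spherical cap $Z\subset\partial\{v=1\}\cap\{0<v<1\}$ that is centered at the origin. Given any direction $f=f_1$, complete to an orthonormal basis and perform $n$ successive reflections of $v$ along hyperplanes $\{x\cdot f_i=\lambda_i\}$ bisecting (the successively reflected) $\{v=1\}$, choosing the sides so that $Z$ survives. The outcome $\tilde v$ is a minimizer with central symmetry about $\lambda=(\lambda_1,\dots,\lambda_n)$, hence centered at $\lambda$; by the already-proved step, its free boundary through $Z$ is a spherical cap centered at $\lambda$. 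But $Z$ is also centered at $0$, and a nondegenerate spherical cap has a unique center, so $\lambda=0$. In particular $\lambda_1=0$, i.e.\ $H_{f,0}$ bisects $\{v=1\}$. This works for every $f$, giving the center property without any global structural statement about $K$ or $\partial\Om$.
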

\begin{proof}
We let $v=u_{e,0}^+$. If $\{0<v<1\}$ has zero measure then as a solution of the isoperimetric problem we know $v$ is the indicator of a ball so we are done; we suppose it is not the case. We may consider $A$ a connected component of $\{0<v<1\}$ as previously and $Z\Subset \partial A\cap\partial K$ a small spherical cap that is open with respect to $\partial B_{r_A}$. Consider another vector $f$ that is completed in an orthogonal basis $f=f_1,f_2,\hdots,f_n$ of $\Rn$. As previously we may reflect $v$ successively as
\[\tilde{v}=\left(\left.\hdots\left(v_{f_1,\lambda_1}^\pm\right)_{f_2,\lambda_2}^\pm\right)\hdots\right)_{f_n,\lambda_n}^\pm,\]
Where the signs are chosen such that $v$ and $\tilde{v}$ coincide on a quadrant that has non-empty intersection with $Z$. By construction $\tilde{v}$ is centrally symmetric around the point $\lambda=(\lambda_1,\hdots,\lambda_n)$, so $Z$ is a spherical cap both around the origin and $\lambda$; this implies that $\lambda=0$ so in particular $\lambda_1=0$, which means that $H_{f,0}$ cuts $\{u_{e,0}^+=1\}$ in half; this is what we wanted to prove.\end{proof}

\item Consider a centered minimizer $u$ (it exists due to the first step, since there exists a centrally symmetric minimizer), such that $|\{0<u<1\}|>0$. We construct another centered minimizer $v$ such that for some connected component $A$ of $\{0<v<1\}$, $\partial A\cap\partial \{v=1\}$ contains a centered sphere. We do this by iteration of the following lemma.
\begin{lemma}\label{lemmait}
Let $u$ be a centered minimizer, let $A$ be a connected component of $\{0<u<1\}$ and let $\lambda\in ]1,2[$ and $D^{\partial B_r}_{x,\rho}\Subset \partial A\cap\partial K$ be a small spherical disk in the sphere of radius $r:=r_A$. Then there exists another centered minimizer $v$ obtained by a finite number of reflexions on $u$, associated to the sets $(K',\Om')=(\{v=1\},\{v>0\})$, such that for some connected component $A'$ of $\{0<v<1\}$, $D_{x,\lambda \rho}^{\partial B_r}\subset\partial K'\cap\partial A'$.
\end{lemma}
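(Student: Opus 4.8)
The plan is to produce $v$ from $u$ by a \emph{finite} composition of reflections $w\mapsto w_{e,0}^{\pm}$ across hyperplanes through the origin. By the reflection lemma just proved, each such operation turns a centered minimizer into a centered minimizer, so $v$ will automatically be a centered minimizer; moreover the structure theory already established forces the part of $\partial K'$ that touches $\partial\{0<v<1\}$ to lie on spheres centred at the origin, so the component $A'$ we end up with again satisfies $r_{A'}=r$ and $v|_{A'}(y)=a-b\,\Phi_n(|y|)$ with $(a,b)$ determined by $r$. Thus all the content lies in arranging the reflections so that the free boundary of $v$, inside a single connected component of $\{0<v<1\}$, contains the enlarged disk $D^{\partial B_r}_{x,\lambda\rho}$.

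\textbf{Effect of one reflection on the free boundary.}
First I would isolate the basic brick. Suppose $w$ is a centered minimizer, $Z\subseteq \partial K_w\cap\partial A_w$ with $A_w$ a component of $\{0<w<1\}$, and $\Sigma=H_e\cap\partial B_r$ is an equator with $x$ strictly on the side $H_e^+$. Then the reflection $w_{e,0}^+$ keeping $H_e^+$ is a centered minimizer whose free boundary contains $(Z\cap\overline{H_e^+})\cup S_\Sigma(Z\cap\overline{H_e^+})$ inside a \emph{single} component $A'$: indeed $w_{e,0}^+$ equals $w$ on $\overline{H_e^+}$ and $w\circ S_\Sigma$ on $\overline{H_e^-}$, which places both pieces in $\partial K'\cap\partial\{0<v<1\}$, and $A'$ joins the piece of $A_w$ to its mirror image because along $\Sigma\cap Z$, approached from the side of $\partial B_r$ on which $A_w$ lies, $w_{e,0}^+$ is continuous with values in $(0,1)$ (one uses $\Sigma\cap Z\neq\emptyset$, a point of tangential contact also working). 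Repeating this bridging keeps everything in one component through all the steps of the construction.

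\textbf{The growth construction.}
Since the caps are small we may assume $D^{\partial B_r}_{x,\lambda\rho}$ is geodesically convex and that equators near it are, up to controlled error, totally geodesic hyperplanes, so the model problem is the Euclidean one of enlarging a ball $B_\rho\subset\R^{n-1}$ to $B_{\lambda\rho}$ by folds. For a unit direction $\omega$ at $x$ let $\Sigma_\omega$ be the equator meeting the geodesic ray from $x$ in direction $\omega$ orthogonally at distance $d_\omega<\rho$; then $S_{\Sigma_\omega}(x)=\exp_x(2d_\omega\,\omega)$, the reflected cap $S_{\Sigma_\omega}(D^{\partial B_r}_{x,\rho}\cap\overline{H_\omega^+})$ reaches out along that ray to distance $2d_\omega+\rho$, and the undisturbed inner cap $D^{\partial B_r}_{x,d_\omega}$ is tangent to $\Sigma_\omega$ from the $x$-side and is never cut. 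Choosing the $d_\omega$ appropriately and a sufficiently fine finite net $\omega_1,\dots,\omega_N$ of directions at $x$, the inner caps together with the $N$ reflected caps cover $D^{\partial B_r}_{x,\lambda\rho}$; applying the reflections across $\Sigma_{\omega_1},\dots,\Sigma_{\omega_N}$ (always keeping the half containing $x$) and invoking the bridging observation above then produces the desired $v$.

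\textbf{The main obstacle.}
The delicate point is that the reflections must be performed \emph{in sequence}, and a reflection, besides adding a reflected cap, pinches the free boundary at the foot of its equator and can destroy part of what an earlier reflection gained: a reflected piece placed near a direction $\omega_j$ typically lies on the wrong side of a later equator $\Sigma_{\omega_i}$ when $\omega_i,\omega_j$ are close, while to cover $D^{\partial B_r}_{x,\lambda\rho}$ one is forced to use directions closer than the angular size of the pieces, so this collision genuinely occurs. Resolving it is where real care is needed: one must either order and group the reflections so that the pieces added stay mutually on the retained sides, or — cleaner — first reduce to $\lambda=1+\varepsilon$ with $\varepsilon$ small, carry out the growth in rounds designed so that the equators used in one round are consistent (e.g. supported on a common codimension-two subsphere, reducing a round to the two-dimensional case, or grouped into mutually-orthogonal families), and iterate $O(\log\lambda/\log(1+\varepsilon))$ rounds. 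A minor additional point is the threshold $\rho=\tfrac{\pi}{2}r_A$ beyond which geodesic balls on $\partial B_{r_A}$ cease to be convex and equators no longer look like hyperplanes; this is irrelevant here because the caps are small, and in the global iteration of the proof of Theorem \ref{bnnt04} it is reached only at the last step and handled separately.
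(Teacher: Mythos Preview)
Your approach coincides with the paper's: reflect across hyperplanes through the origin whose traces on $\partial B_r$ are equators tangent to an inner disk $D(x,(1-\varepsilon)\rho)$ (the paper takes $\varepsilon=1-\lambda/2$, so the reflected image of $x$ lands at geodesic distance $\lambda\rho$), choose finitely many tangency points $y_i$ by compactness so that the reflected neighbourhoods $N(y_i)$ cover the target cap, and iterate. The paper's proof is two sentences long and simply says ``by iterating the associated reflexions we obtain the result''; it does \emph{not} address the point you isolate as the main obstacle.

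That obstacle is real and your proof does not close it. You correctly observe that the reflections are performed sequentially and that a later equator $\Sigma_{\omega_i}$ can cut through a petal $N(y_j)$ produced earlier when $\omega_i,\omega_j$ are close --- and that covering $D^{\partial B_r}_{x,\lambda\rho}$ forces directions closer than the petal width, so the collision is unavoidable in the naive scheme. But you then only list possible remedies (careful ordering, reduction to $\lambda=1+\varepsilon$ and rounds with equators sharing a common codimension-two subsphere, mutually orthogonal families) without carrying any of them out. As it stands, neither your write-up nor the paper's actually exhibits a finite sequence of reflections after which the free boundary of a single component $A'$ provably contains $D^{\partial B_r}_{x,\lambda\rho}$. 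To finish, you need to commit to one scheme and track what survives: for instance, show that at every stage the free boundary contains the full inner disk $D(x,(1-\varepsilon)\rho)$ together with a controlled set of reflected pieces on the already-processed side, and that the next reflection (chosen in a fixed cyclic or dyadic order) preserves enough of these while adding the new petal. Your ``rounds'' idea is the most promising route, but it needs to be executed.
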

\begin{proof}
We will denote $D(x,\rho)$ the ball relative the the unit sphere. If $\rho>\frac{\pi}{2} r$ then one reflexion along the hyperplane $x^\bot$ is enough; we suppose that $\rho\leq\frac{\pi}{2} r$.\bigbreak

Let $\eps=1-\frac{\lambda}{2}$. For every $y\in \partial D(x,(1-\eps)\rho)$, the reflexion by the hyperplane going through $y$ and tangent to $\partial D(x,(1-\eps)\rho)$ reflects a neighbourhood of $[x,y]$ on a neighbourhood $N(y)$ of $[x',y]$ for some point $x'$ in the continuation of the geodesic $[xy)$ at distance $\lambda\rho$ from $x$. By compactness there is some finite set $(y_i)_{i\in I}$ in $\partial D(x,(1-\eps)\rho)$ such that $\cup_{i\in I}N(y_i)\supset D(x,\lambda\rho)$. By iterating the associated reflexions we obtain the result.
\end{proof}

\item There exists a centered minimizer $u$ such that $\{u=1\}$ is a ball. Indeed, consider a minimizer $(K,\Om)$ with a function $u$ as previously, such that there is some $r>0$ for which $\partial B_r\subset \partial K$. For every such $r$, the function $u_r=\max(u,1_{B_r})$ is also a minimizer associated to $(K\cup B_r, \Om\cup B_r)$ that has the same energy as $u$, thus $u=1$ on $B_r$. In particular this means that $u$ is radially decreasing on the whole component of $\Om$ that contains a small neighbourhood of $B_r$; let us denote $\Om_1$ this component and $\Om_2=\Om\setminus \Om_1$, as well as $u_i=u1_{\Om_i}$. Then $u=u_1+u_2$ where $u_1,u_2$ have disjoint support and $J_{u_1}\cup J_{u_2}\subset J_u$. If $u_2$ is not zero this means that $u$ is a disconnected minimizer; let us prove that this is not possible.\bigbreak

Indeed, let $m_i=|\{u_i=1\}|$ and $v_i(\cdot)=u_i\left(\Big (\frac{m_i}{\om_n}\Big)^{1/n}\cdot\right)$. Then $|\{v_i=1\}|=\om_n$ and
\[\E_{\Theta,\Lambda}(u_i)=\left(\frac{m_i}{\om_n}\right)^{1-\frac{2}{n}}\int_{\Rn}|\nabla v_i|^2\dx+\left(\frac{m_i}{\om_n}\right)^{1-\frac{1}{n}}\int_{J_{v_i}}\Theta(v_i)\ds+\frac{m_i}{\om_n}\Lambda|\{v_i>0\}|>\frac{m_i}{\om_n}\E_{\Theta,\Lambda}(v_i),\]
where the inequality is strict because $\E_{\Theta}(v_i)>0$ and $m_i<\om_n$ (or else one of the $v_i$ is zero, which we supposed is not the case). Now,
\[\E_{\Theta,\Lambda}(u)=\E_{\Theta,\Lambda}(u_1)+\E_{\Theta,\Lambda}(u_2)>\frac{m_1}{\om_n}\E_{\Theta,\Lambda}(v_1)+\frac{m_2}{\om_n}\E_{\Theta,\Lambda}(v_2)\geq \inf \E_{\Theta,\Lambda}.\]
This contradicts the fact that $u$ is a minimizer. Thus $u_2=0$ and $u=u_1$ with $\{u_1=1\}=B_r$, which turns out to be the ball of volume $\om_n$, this determines $r$ to be $1$ and in turns means that $u$ is of the form
\[u(x)=\varphi(|x|)1_\Om,\]
for some radial function $\varphi$ that takes the value $1$ on $[0,r]$ and some set $A$ that contains $B_1$, and takes the value $1-c(\Phi_n(|x|)-\Phi_n(1))$ on $\Om$ for some $c>0$.

\item Consider such a minimizer $u$ associated with the sets $(K,\Om)$, such that $\{u=1\}$ is the ball $B_1$, $\{u>0\}$ contains a neighbourhood of $B_1$ and $u(x)$ is a radial function $\varphi(|x|)$ restricted to a non-necessarily radial set $\Om$. Then we prove that for some $R>0$, $\varphi 1_{B_R}$ is also a minimizer.\bigbreak

 First replace $\Om$ with its spherical cap rearrangement $\Om^s$ around the axis $\{te_1,t>0\}$, meaning that for every $t>0$, $\Om^s\cap\partial B_{t}$ is a spherical cap centered in $te_1$ with the same $\mathscr{H}^{n-1}$-measure as $\Om\cap \partial B_t$. Said differently this means that $\Hs(\Om\cap \partial B_t)=\Hs(\Om^s\cap\partial B_t)$ for all $t>0$ with $\partial \Om^s=\{f(e\cdot e_1)e,e\in\mathbb{S}^{n-1}\}$ for some nondecreasing $f:[-1,1]\to (r,+\infty)$ that is continuous in $1$ by openness of $\Om^s$ (it may also be checked afterward that in our case, $f$ is continuous at $-1$ by minimality although we will not need it). A property of spherical cap rearrangement (see for instance \cite[Prop. 3 and Rem. 4]{MHH11}) is that
\[\int_{\partial^*\Om^s}\varphi(|x|)d\Hs\leq\int_{\partial^*\Om}\varphi(|x|)d\Hs\]
and $|\Om^s|=|\Om|$, $\int_{\Om^s}|\varphi'(|x|)|^2dx=\int_{\Om}|\varphi'(|x|)|^2dx$. As a consequence, $\varphi(|\cdot|)1_{\Om^s}$ is still a minimizer. Write $R:=f(1)$, for any small $\eps>0$, there is a small enough $\rho>0$ such that
\[f([1-\rho,1])\subset [R-\eps,R].\]


We may iterate Lemma \ref{lemmait} again to obtain a minimizer $(u_\eps,K_\eps,\Om_\eps)$ such that $u_\eps$ is still given by the same radial function $\varphi$ on a non-necessarily radial set, and such that
\[K_\eps=B_r,\ B_{R-\eps}\subset \Om_\eps\subset B_{R}.\]
Since this may be done for any arbitrarily small $\eps$, by lower semi-continuity as $\eps\to 0$ we obtain that $(\varphi(|\cdot|)1_{B_R},B_r,B_{R})$ is a minimizer.
\bigbreak

Notice we could have done the same for $(B_1,B_{F(-e_1)})$; if $F(-e_1)<F(e_1)$ it means we are in a case in which $\mathrm{argmin}\{R\ge 1 \mapsto E_{\Theta,\Lambda}(B_1,B_R)\}$ is not uniquely defined.
\end{itemize}
\end{proof}

\begin{corollary}\label{CorPenalized}
If, additionally to the hypotheses of Theorem \ref{bnnt04}, $\Theta$ is such that $ [1,+\infty[\ni R\to E_{\Theta}(B_1,B_R)$ is minimal at $1$, then for any $(K,\Om)$
\[E_{\Theta}(K,\Om)\geq E_{\Theta}(B_1,B_1).\]
\end{corollary}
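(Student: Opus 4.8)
The plan is to use Theorem~\ref{bnnt04} with the penalization parameter as an auxiliary variable and then send it to $0$. First I would fix $\Lambda>0$ and recall that, by Theorem~\ref{bnnt04}, the infimum in \eqref{bnnt02} is attained by two concentric balls; since the inner set has volume $\om_n$ it is a translate of $B_1$, and the outer one is then a translate of $B_{R_\Lambda}$ for some $R_\Lambda\ge 1$. By translation invariance of the energy this gives, for \emph{every} admissible pair $(K,\Om)$ (with $K\subset\Om$, $|K|=\om_n$),
\[
E_\Theta(K,\Om)+\Lambda|\Om|\ \ge\ E_\Theta(B_1,B_{R_\Lambda})+\Lambda|B_{R_\Lambda}|.
\]

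Next I would feed in the extra hypothesis. Since $R_\Lambda\ge1$, we have $E_\Theta(B_1,B_{R_\Lambda})\ge E_\Theta(B_1,B_1)$ and $|B_{R_\Lambda}|\ge|B_1|=\om_n$, whence
\[
E_\Theta(K,\Om)+\Lambda|\Om|\ \ge\ E_\Theta(B_1,B_1)+\Lambda\om_n,
\qquad\text{i.e.}\qquad
E_\Theta(K,\Om)\ \ge\ E_\Theta(B_1,B_1)-\Lambda\bigl(|\Om|-\om_n\bigr).
\]
Finally I would let $\Lambda\downarrow 0$: as $|\Om|$ is fixed and finite, the error term vanishes and the claim follows. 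This last passage is where the finiteness of $|\Om|$ (part of the standing admissibility) is used; for unbounded $\Om$ the inequality is not to be expected, since the boundary term is then absent.

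I expect no real obstacle here: all the substance sits in Theorem~\ref{bnnt04}, and what remains is a one-line monotonicity estimate followed by a limit. The only mild subtlety is the finiteness of $|\Om|$ just mentioned. As a side remark, testing the first display with $(K,\Om)=(B_1,B_1)$ forces $|B_{R_\Lambda}|\le\om_n$, hence $R_\Lambda=1$ for every $\Lambda>0$; so under this hypothesis the penalized minimizer is always $(B_1,B_1)$, which makes the passage to the limit entirely transparent, although it is not strictly needed.
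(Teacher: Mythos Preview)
Your argument is correct and follows the same route as the paper: invoke Theorem~\ref{bnnt04} to reduce the penalized problem to concentric balls, use the hypothesis that $R\mapsto E_\Theta(B_1,B_R)$ is minimal at $R=1$ (together with the obvious monotonicity of $\Lambda|B_R|$) to bound below by $E_\Theta(B_1,B_1)+\Lambda\om_n$, and let $\Lambda\to 0$. Your side remark that in fact $R_\Lambda=1$ for every $\Lambda>0$ is exactly how the paper phrases its (one-line) proof.
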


\begin{proof}
$(B_1,B_1)$ is always the minimizer of $E_{\Theta,\Lambda}$ as $\Lambda\to 0$, hence the result.
\end{proof}
\begin{remark} \label{bnnt20}\rm
This covers the case $\beta\leq  n-2 $ for  $\Theta(u)=\beta u^2$.
\end{remark}

\section{Proof of Theorem \ref{bnnt03.2}}\label{bnnt09}

Part of the assertions of the Theorem  \ref{bnnt03.2}, namely the existence of a solution to problem \eqref{bnnt01} and its regularity, have already been proved in the preparatory section \ref{bnnt07} (see Proposition \ref{ThExistence} and Remark \ref{bnnt22}). 

\begin{proof}[Proof of Theorem \ref{bnnt03.2}] In order to complete the proof of Theorem  \ref{bnnt03.2}, we give the following.
\begin{lemma}
Let $(K,\Om)$ a solution of \eqref{bnnt01}. Then $K$ has locally finite perimeter in $\Om$ and $\partial K\cap\Om$ is analytic when $n=2$.
\end{lemma}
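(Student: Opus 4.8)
We may assume $|\{0<u<1\}|>0$; otherwise $u=1_K$ is identified by the isoperimetric inequality, $K=\Om$ is a ball, and there is nothing to prove. Set $w:=1-u\in H^1(\Om)$, so that $w\ge 0$, $w$ is harmonic on $\Om\setminus K=\{w>0\}\cap\Om$, $w\equiv 0$ on $K$, and $\partial K\cap\Om=\partial\{w>0\}\cap\Om$. The plan is to show that $w$ is, locally in $\Om$, an almost minimizer of a one-phase Alt--Caffarelli functional, and then to invoke the associated free boundary regularity theory --- this is the Alt--Caffarelli problem inside $\Om$ announced in the strategy of Section~\ref{bnnt07}.

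The key point is to produce a Lagrange multiplier for the constraint $|K|=\om_n$. Fix a ball $B=B_r(x)\Subset\Om$ in which one wishes to study the free boundary and an auxiliary ball $B'\Subset\Om$ disjoint from $B$ with $|B'\cap K|,\,|B'\setminus K|>0$ (such a $B'$ exists by the argument already used for the Hölder estimate in Section~\ref{bnnt07}: a point of $\partial K\cap\Om$ cannot be isolated when $n\ge 2$, since it must locally separate the interior of $K$ from $\Om\setminus K$, so there is always room away from $B$). Lemma~\ref{lem_volumeexchange} applied to $\{u=0\},\{0<u<1\},\{u=1\}$ in $B'$ furnishes a compactly supported flow $\Phi_t$ in $B'$ that adjusts $|\{u=1\}|$ by $t+O(t^2)$ while fixing $|\{u>0\}|$ to first order (the $O(t^2)$ error is absorbed by a small dilation, as in the proof of Proposition~\ref{prop:existence}). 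For a competitor $\tilde u$ equal to $u$ outside $B$, with $\delta\,1_{\{\tilde u>0\}}\le\tilde u\le 1$ (lower truncation being admissible by Lemma~\ref{apriori}, and the support $\{u>0\}$ being unchanged since $u$ is continuous and bounded below near $\partial K\cap\Om$), the map $\tilde u\circ\Phi_t^{-1}$ with $t$ chosen so that $|\{\tilde u\circ\Phi_t^{-1}=1\}|=\om_n$ is admissible for \eqref{bnnt01}. Comparing with $u$ and using that $E_\Theta(\cdot,\Om)$ is nondecreasing under inclusion of $K$ --- so the associated multiplier $\mu$ is $\ge 0$ --- a first variation in $t$ (as in Section~\ref{bnnt07}) yields
\[
\int_B|\nabla u|^2\dx-\mu\,|\{u=1\}\cap B|\ \le\ \int_B|\nabla\tilde u|^2\dx-\mu\,|\{\tilde u=1\}\cap B|+o\big(|\{u\neq\tilde u\}|\big).
\]
Since $|\{u=1\}\cap B|=|B|-|\{w>0\}\cap B|$, this says exactly that $w$ is an almost minimizer, in $B$, of the one-phase Bernoulli functional $v\mapsto\int_B|\nabla v|^2\dx+\mu\,|\{v>0\}\cap B|$ with gauge $o(r^n)$. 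Finally $\mu>0$: if $\mu=0$, the inequality above would make $u$ a local minimizer of the Dirichlet energy under the sole constraint $u\le 1$, hence harmonic in $\Om$; being equal to the constant $1$ on the positive-measure set $K$, $u$ would then be $\equiv 1$ on every connected component of $\Om$ meeting $K$, and since no component of $\Om$ can avoid $K$ (otherwise $u\equiv 0$ there, contradicting $\Om=\{u>0\}$), this forces $\Om=K$, against $|\{0<u<1\}|>0$.

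With $\mu>0$, the regularity theory for one-phase Bernoulli (almost) minimizers --- Alt--Caffarelli in the minimizing case, David--Toro for almost minimizers --- applies: $w$ is locally Lipschitz and nondegenerate in $\Om$, $\{w>0\}$ satisfies two-sided Lebesgue density estimates along its free boundary, $\partial\{w>0\}\cap\Om=\partial K\cap\Om$ has locally finite $\Hs$-measure (indeed it is Ahlfors regular), and $\{w>0\}$ --- hence $K=\Om\setminus\{w>0\}$ --- has locally finite perimeter in $\Om$. Moreover, in dimension $n=2$ the one-phase free boundary has no singular points, so $\partial K\cap\Om$ is locally a $C^{1,\alpha}$ curve; on it $u=1-w$ is harmonic on the positive side with $u\equiv 1$ and $|\nabla u|\equiv\sqrt{\mu}$ (the Euler--Lagrange free boundary condition, which holds pointwise once the free boundary is $C^1$), so $\partial K\cap\Om$ is the $C^{1,\alpha}$ free boundary of an overdetermined elliptic problem with constant (analytic) data, hence real-analytic by the Kinderlehrer--Nirenberg regularity theorem.

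I expect the main obstacle to be the first step: extracting from the doubly volume-constrained minimality a clean one-phase Bernoulli almost minimality for $w=1-u$, with a multiplier of the correct (positive) sign and with no perturbation of the support $\{u>0\}$. Once that is in place, the local finiteness of the perimeter of $K$ and the two-dimensional analyticity are routine consequences of the classical free boundary regularity theory.
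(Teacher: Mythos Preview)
Your outline is correct and follows exactly the route the paper intends: the paper's own proof is a one-line reference to \cite{BL09}, and its Step~4 in Section~\ref{bnnt07} already announces the Alt--Caffarelli framework inside $\Om$, which is precisely what you carry out (Lagrange multiplier via the volume-exchange flow, one-phase almost-minimality for $w=1-u$, then Alt--Caffarelli/David--Toro regularity and Kinderlehrer--Nirenberg in $n=2$).

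Two small clean-ups. First, since $B'\Subset\Om=\{u>0\}$ and $u$ is continuous and bounded below there, the set $\{u=0\}\cap B'$ is empty; Lemma~\ref{lem_volumeexchange} should be invoked with the two-set partition $\{u=1\},\{0<u<1\}$ of $B'$ (exactly as in the H\"older regularity step of Section~\ref{bnnt07}), and the constraint $|\{u>0\}|\le M$ is then preserved \emph{exactly}, not merely to first order --- so no auxiliary dilation is needed. Second, your $\mu>0$ argument is fine but can be phrased more directly: if $\mu=0$, the free boundary condition reads $|\nabla w|=0$ on $\partial\{w>0\}\cap\Om$, and together with $w=0$ there and the harmonicity of $w$ on $\{w>0\}$, the Cauchy--Kovalevskaya/unique continuation argument forces $w\equiv 0$ on each component meeting $K$, contradicting $|\{0<u<1\}|>0$.
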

\begin{proof}
We refer to \cite{BL09} where a  closely related problem   is treated. The same arguments apply in our case. 
\end{proof}

\begin{lemma}
Let $\Theta$ be l.s.c. and nondecreasing. Let $(K,\Om)$ be a solution of \eqref{bnnt01} for $M=R^n\om_n$. Then either $|\Om|=M$ or $E_{\Theta}(K,\Om)=\inf_{r\in [1,R]}E_{\Theta}(B_1,B_r)$.
\end{lemma}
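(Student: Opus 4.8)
The plan is to exploit the fact that problem \eqref{bnnt01} with the strict inequality $|\Om| < M$ is equivalent, up to the choice of a small penalization parameter, to the penalized problem \eqref{bnnt02} already solved in Theorem \ref{bnnt04}. Concretely, if $(K,\Om)$ is a solution of \eqref{bnnt01} with $|\Om| < M$, then $(K,\Om)$ is locally a minimizer of $E_\Theta$ under the sole constraint $|K| = \om_n$ (with no volume constraint active on $\Om$), so it must coincide with a minimizer of $E_{\Theta,\Lambda}$ for $\Lambda$ arbitrarily small; Theorem \ref{bnnt04} then forces $(K,\Om)$ to be two concentric balls, say $(B_1, B_r)$ for some $r \in [1, M^{1/n}\om_n^{-1/n}]$. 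The only remaining point is to identify the energy; since $(B_1,B_r)$ is optimal among all admissible competitors and in particular among all $(B_1,B_\rho)$ with $\rho \in [1,R]$, we get $E_\Theta(K,\Om) = E_\Theta(B_1,B_r) = \inf_{\rho\in[1,R]} E_\Theta(B_1,B_\rho)$. Actually the statement is cleaner than this: it is a dichotomy, so it suffices to show that \emph{whenever} $|\Om| < M$, the energy equals $\inf_{r\in[1,R]}E_\Theta(B_1,B_r)$.

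The cleanest route avoids re-running the reflection machinery. First I would observe that the function $m \mapsto \mathcal{E}(m) := \inf\{E_\Theta(K,\Om) : |K| = \om_n,\ |\Om| \le m\}$ is nonincreasing on $[\om_n, +\infty)$ and, by the arguments already used in Section \ref{bnnt08} (truncation with uniformly bounded support plus $SBV$-compactness, exactly as in the proof that $M \mapsto E_\Theta(K^M,\Om^M)$ is lower semicontinuous), it is lower semicontinuous, hence continuous from the right at least. Next, if $(K,\Om)$ solves \eqref{bnnt01} for $M = R^n\om_n$ and $|\Om| < M$, then $(K,\Om)$ is also optimal for the problem with upper bound $|\Om|$ replaced by $|\Om|$ itself (trivially) and, more importantly, for every $m \in [|\Om|, M]$, so $\mathcal{E}$ is constant on $[|\Om|, M]$. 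In particular $\mathcal{E}(M) = \mathcal{E}(|\Om|)$, and since the constraint is not saturated the configuration is a free minimizer: choosing any $\Lambda > 0$ with $\Lambda(M - |\Om|)$ smaller than any competing energy gap makes $(K,\Om)$ a minimizer of $E_{\Theta,\Lambda}$ among $|\{u=1\}| = \om_n$ with no volume cap, so Theorem \ref{bnnt04} applies and gives concentric balls.

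Once we know $(K,\Om) = (B_1, B_r)$ for some $r \ge 1$ with $r^n\om_n = |\Om| \le M$, the final identification is immediate: by optimality $E_\Theta(B_1, B_r) \le E_\Theta(B_1, B_\rho)$ for every $\rho \in [1, R]$ (each such $(B_1, B_\rho)$ is admissible for \eqref{bnnt01}), so $E_\Theta(B_1,B_r) = \inf_{\rho \in [1,R]} E_\Theta(B_1, B_\rho)$, which is the claimed formula. The slightly delicate bookkeeping point is that Theorem \ref{bnnt04} was proved for $\Theta$ satisfying \eqref{HypTheta} (or reducible to such by the $\Theta^\eps$ approximation), whereas here $\Theta$ is only assumed l.s.c.\ and nondecreasing; so I would first run the $\Theta^\eps$ reduction of Section \ref{bnnt08} to reduce to that case, using lower semicontinuity of $\Theta$ and dominated convergence exactly as in the proof of the lemma preceding Theorem \ref{bnnt04}, then pass to the limit $\eps \to 0$ in the equality $E_{\Theta^\eps}(B_1,B_{r^\eps}) = \inf_{\rho\in[1,R]} E_{\Theta^\eps}(B_1,B_\rho)$.

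The main obstacle, and the step I would spend the most care on, is the passage from ``$|\Om| < M$, i.e.\ the $\Om$-constraint is inactive'' to ``$(K,\Om)$ is a genuine minimizer of the penalized functional'': one must rule out that the $|K| = \om_n$ constraint interacts badly, and verify that the local optimality of $(K,\Om)$ really transfers to optimality of $E_{\Theta,\Lambda}$ for all sufficiently small $\Lambda$ — this is where the continuity (not just lower semicontinuity) of $m \mapsto \mathcal{E}(m)$ near $m = |\Om|$ is used, together with the fact that $\mathcal{E}$ is constant to the right of $|\Om|$. Everything else is either a direct appeal to Theorem \ref{bnnt04} or the elementary observation that concentric balls of all admissible radii are competitors.
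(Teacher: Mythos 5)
Your route — reduce the inactive‐constraint case to the penalized problem \eqref{bnnt02} and invoke Theorem \ref{bnnt04} — is genuinely different from the paper's, and the final identification of the energy once you have concentric balls is fine. But the reduction itself has a gap that I don't see how to close with what you invoke, and it is precisely the step you yourself flag as delicate. Write $\mathcal{E}(m)=\inf\{E_\Theta(K,\Om):|K|=\om_n,\,|\Om|\le m\}$ and $m_0:=|\Om|<M$. For $(K,\Om)$ to be a global minimizer of $E_{\Theta,\Lambda}$ over \emph{all} competitors (no volume cap) you need $\Lambda$ to lie in the ``subdifferential'' of $\mathcal{E}$ at $m_0$, i.e.\ simultaneously
\[
\Lambda\ \ge\ \sup_{m>m_0}\frac{\mathcal{E}(m_0)-\mathcal{E}(m)}{m-m_0},
\qquad
\Lambda\ \le\ \inf_{m<m_0}\frac{\mathcal{E}(m)-\mathcal{E}(m_0)}{m_0-m}.
\]
The first constraint is nontrivial: competitors with $|\Om'|>M$ are now allowed and can have strictly smaller $E_\Theta$, so the constrained optimality of $(K,\Om)$ controls nothing there; the second is nontrivial because the left lower Dini derivative of a nonincreasing l.s.c.\ function can vanish. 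Compatibility of the two bounds is a convexity-type property of $\mathcal{E}$ which neither your argument nor the paper establishes; monotonicity, right-continuity, and flatness of $\mathcal{E}$ on $[m_0,M]$ do not give it. Moreover, the heuristic ``$\Lambda$ small'' is wrong in direction: in the convection case with $n-2<\beta<n-1$ and $\om_n<M<|B_{R(n,\beta)}|$, the constrained minimizer is $(B_1,B_1)$ with $m_0=\om_n$, but $(B_1,B_1)$ minimizes $E_{\beta,\Lambda}$ only for $\Lambda$ \emph{large} (for small $\Lambda$ the penalized optimizer has a large outer radius).

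The paper avoids the Lagrange-multiplier step entirely. It selects, by the $SBV$-compactness argument of Section \ref{bnnt07}, a minimizer $(K,\Om)$ of smallest volume among all minimizers, and then shows that every hyperplane cutting $K$ in half must also cut $\Om$ in half: if some halving hyperplane $H$ had $|H^+\cap\Om|\ne|H^-\cap\Om|$, reflecting the lighter side would give (using $|\Om|<M$ to keep admissibility) a minimizer of strictly smaller volume, contradicting the normalization; the case $m(H)=M/2$ is excluded for the same reason, and a connectedness argument on the set of halving hyperplanes finishes the dichotomy. Once halving hyperplanes of $K$ also halve $\Om$, the reflection and spherical-cap rearrangement machinery of Theorem \ref{bnnt04} runs unchanged and produces concentric balls, after which the energy identity is the elementary comparison you give. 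The point, then, is that the paper re-runs the symmetrization directly under the minimal-volume normalization rather than trying to certify $(K,\Om)$ as a penalized minimizer for some $\Lambda$.
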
 
\begin{proof}
Suppose there is a minimizer with measure strictly less than $M$.  Roughly speaking, in this case the measure constraint is not saturated so that the problem behaves under many aspects an unconstrained one. 

By compactness, there exists $(K,\Om)$ that is the minimizer with the lowest volume, associated to a function $u$. Indeed, consider $(K_i,\Om_i)$ a sequence of such minimizers, associated with functions $(u_i)$, then reproducing the existence proof in Section \ref{bnnt07} on the sequence $(u_i)$, we obtain a new minimizer with minimal volume,  still denoted $(K,\Om)$.\bigbreak

If $|\Om|=\om_n$ then we are done, so we suppose that this is not the case.\bigbreak

Let $\mathcal{H}$ be the set of every hyperplane that cuts $K$ in half. It is not quite identified with $\RP$ since there may be several parallel hyperplanes that cut $K$ in half if $K$ is not connected. It is, however, straightforward that $\mathcal{H}$ is a connected set for the natural topology given by $\{(H,v)\in\RP\times \Rn\text{ s.t. }v\in H^\bot\}$. For every hyperplane $H$, we write \[m(H)=\sup\left\{\left|H^+\cap\Om\right|,\left|H^{-}\cap\Om\right|\right\}\]
and we let
\begin{align*}
\mathcal{H}_{<}&=\left\{H\in\mathcal{H}: m(H)<\frac{M}{2}\right\},\\
\mathcal{H}_{=}&=\left\{H\in\mathcal{H}: m(H)=\frac{M}{2}\right\},\\
\mathcal{H}_{>}&=\left\{H\in\mathcal{H}: m(H)>\frac{M}{2}\right\}.\\
\end{align*}
Then $\mathcal{H}_{<}$ and $\mathcal{H}_{>}$ are open relatively to $\mathcal{H}$ by the continuity of $m$. The minimality of the volume of $\Om$ and the fact that $|\Om|<M$ implies that $\mathcal{H}_{=}$ is empty (otherwise we could construct a minimizer with stricly lower volume by reflection around an element of $\mathcal{H}_{=}$).\bigbreak

Finally, $\mathcal{H}_<$ is not empty, as may be seen by considering, for each $\theta\in\frac{\mathbb{R}}{\pi\mathbb{Z}}$, an hyperplane orthogonal to $\cos(\theta)e_1+\sin(\theta)e_2$ and using intermediate value theorem after making a full turn. By connectedness, this directly implies that $\mathcal{H}_{>}=\emptyset$ and so $\mathcal{H}_{<}=\mathcal{H}$. Now, let $H\in\mathcal{H}$, then $\Om$ may be reflected on both sides of $H$: since the measure of $\Om$ is minimal among minimizers, then
\[\left|H^-\cap\Om\right|=\left|H^+\cap\Om\right|.\]

Thus, for every hyperplane that cuts $K$ in half, $\Om$ is also cut in half; this means that we can use the same arguments than  for the penalized problem in Theorem \ref{bnnt04}: by successive reflections, we find a solution with minimal volume such that the free boundary of $u$ is $\partial B_1$. By spherical rearrangement around $\{te_1,t>0\}$ we obtain a new minimizer $(K,\Om)$. Now, if $\Om$ is not a ball, then $\left|H_{e_1}^+\cap\Om\right|>\left|H_{e_1}^-\cap\Om\right|$, which is a contradiction.
\end{proof}

\medskip
We prove now the last assertion of Theorem \ref{bnnt03.2}. We begin with an a priori estimate that is similar to Lemma \ref{apriori}, only that it is done with in mind the idea to find a lower bound arbitrarily close to $1$.
\begin{proposition}\label{proposition_high_cutoff}
Let $\Theta$ be l.s.c and nondecreasing. There exists a constant $C_n>0$ such that, if $\delta\in ]0,1[$ verifies
\[\delta +C_n\frac{\Theta(1)}{\sqrt{\Theta(\delta)}}(M-\om_n)^\frac{1}{2n}< 1,\]
then for all $u$ such that $|\{u\ge 1\}|\ge \om_n$, $|\{u> 0\}|\le M$, there is some $t>\delta$ such that $\E_{\Theta}(u1_{\{u>t\}})\leq \E_{\Theta}(u)$.
\end{proposition}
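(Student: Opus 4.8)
## Proof Plan for Proposition \ref{proposition_high_cutoff}

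The plan is to mimic the structure of Lemma \ref{apriori}, but now I want a cutoff level $t$ that can be taken \emph{arbitrarily close to $1$} as $M\to\om_n$, rather than merely bounded away from $0$. The strategy is a contradiction argument: assume that for every $t\in(\delta,1)$ we have $\E_\Theta(u1_{\{u>t\}})>\E_\Theta(u)$, and derive a lower bound on the measure of the "transition region" $\{\delta<u<1\}$ that violates $|\{u>0\}|\le M$ when combined with $|\{u=1\}|\ge\om_n$.

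First I would write down what the failure of the cutoff at level $t$ means. If $\E_\Theta(u1_{\{u>t\}})>\E_\Theta(u)$, then comparing the two energies, the piece of Dirichlet energy $\int_{\{u\le t\}}|\nabla u|^2\dx$ that is discarded, plus the jump energy on $\{u_+\le t\}\cup\{u_-\le t\}$ that is discarded, must be less than the new boundary contribution created by the level set $\{u=t\}$, which costs at most $\Theta(t)\,\Hs(\{u=t\}\setminus J_u)\le\Theta(1)\,h(t)$ where $h(t)=\Hs(\{u=t\}\setminus J_u)$. So the key inequality becomes, for a.e.\ $t\in(\delta,1)$,
\[
\int_{\{u\le t\}}|\nabla u|^2\dx+\Theta(\delta)\,\gamma(t)\le \Theta(1)\,h(t),
\]
where $\gamma(t)$ collects the jump measure below level $t$. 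From here the plan is to run the same two-sided estimate as in Lemma \ref{apriori} but on the region $\Om(\delta,t)=\{\delta<u\le t\}$ rather than on a dyadic shell: a lower bound on $\int_{\Om(\delta,1)}|\nabla u|\dx$ via coarea, Cauchy--Schwarz, and the isoperimetric inequality (where the jump term $\gamma$ is absorbed thanks to $\Theta(\delta)$), against an upper bound on $\int_{\Om(\delta,t)}|\nabla u|^2\dx$ obtained by the same Gronwall-type ODE trick for $G_\eta(t)=\int_0^t g$. Combining these will produce a lower bound of the shape
\[
|\{\delta<u<1\}|\ge c_n\,\frac{\Theta(\delta)}{\Theta(1)^2}\,(1-\delta)^{?},
\]
and the exponents have to be tracked carefully to land on the stated form $\delta+C_n\frac{\Theta(1)}{\sqrt{\Theta(\delta)}}(M-\om_n)^{1/2n}<1$. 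Rearranging, a contradiction with $|\{\delta<u<1\}|\le M-\om_n$ is exactly the hypothesis of the proposition, so under that hypothesis some admissible cutoff level $t>\delta$ must exist.

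The main obstacle I anticipate is getting the exponent bookkeeping to match the precise inequality in the statement: unlike Lemma \ref{apriori}, where the dyadic summation washes out constants into an integral $\int t^{2n-1}/\Theta(t)^n$, here I need a single clean shell $(\delta,1)$ and the estimate must be uniform in $\Theta$ on that whole interval, using only $\Theta(\delta)$ from below and $\Theta(1)$ from above (monotonicity of $\Theta$ is what lets me do this). I would apply the lower bound on $\int|\nabla u|$ with the "width" parameter equal to a fixed fraction of $1-\delta$ (playing the role of $t=\eta/8$ in Lemma \ref{apriori}), then the single resulting inequality should read, schematically, $(1-\delta)^{2n-1}\Theta(\delta)^n\Theta(1)^{-(2n-1)}\lesssim (M-\om_n)^{1/2}\cdot(1-\delta)^{-1}\Theta(1)(M-\om_n)$; isolating $1-\delta$ and taking roots gives $1-\delta\lesssim \Theta(1)\Theta(\delta)^{-1/2}(M-\om_n)^{1/2n}$ up to a dimensional constant, which is the claimed condition. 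A secondary technical point is justifying the chain of inequalities (coarea, absolute continuity of the relevant one-variable functions, the ODE integration steps) in the $SBV$ setting; but all of this is verbatim as in the proof of Lemma \ref{apriori} and I would simply refer to that argument rather than repeat it.
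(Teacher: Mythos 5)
Your overall strategy---a contradiction argument built around a shell decomposition, the isoperimetric inequality, and a Gronwall/ODE estimate, followed by the exponent bookkeeping you describe---is the right one and matches the paper's approach. However, there is a structural gap in the contradiction hypothesis. You assume only that the \emph{one-sided} truncation fails, i.e.\ $\E_\Theta(u1_{\{u>t\}})>\E_\Theta(u)$ for every $t\in(\delta,1)$. The paper instead works (exactly as it does in the proof of Lemma \ref{apriori}) with the \emph{two-sided} slab removal as contradiction hypothesis: $\E_\Theta(u1_{\{s<u<t\}^c})>\E_\Theta(u)$ for every $\delta<s<t<1$, which after recentering at $\eta:=1-\tfrac{1-\delta}{2}$ becomes
\[
\int_{\Om(\eta-\tau,\eta+\tau)}|\nabla u|^2\dx+\Theta(\delta)\,\gamma(\eta-\tau,\eta+\tau)\le\Theta(1)\bigl(h(\eta-\tau)+h(\eta+\tau)\bigr),\qquad \tau\in\bigl(0,\tfrac{1-\delta}{2}\bigr).
\]
The two-sidedness is essential and your version does not close. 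When you apply the isoperimetric inequality to the shell $\Om(s,t)=\{s<u\le t\}$ you must bound its perimeter, and $\Per(\Om(s,t))\lesssim h(s)+h(t)+\gamma(s,t)$: the cut at level $t$ only gives you control of $h(t)$, so $h(s)$ (in particular $h(\delta)$) remains completely uncontrolled and the isoperimetric step produces no useful lower bound on $|\Om(s,t)|$. Correspondingly, the ODE is driven by the derivative $f'(\tau)=h(\eta-\tau)+h(\eta+\tau)$, which is precisely what the symmetric two-sided comparison delivers; with your $f(t)=\int_\delta^t h$ one has $f'(t)=h(t)$ and the perimeter bound is lost.

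There is also a second, smaller slip in your ``key inequality'' $\int_{\{u\le t\}}|\nabla u|^2\dx+\Theta(\delta)\gamma(t)\le\Theta(1)h(t)$. The jump-energy savings from truncating below level $t$ equal $\int_{J_u}\bigl(\Theta(u_+)1_{0<u_+\le t}+\Theta(u_-)1_{0<u_-\le t}\bigr)\ds$, and traces in $(0,\delta)$ have $\Theta$-value below $\Theta(\delta)$, so you cannot bound these savings from below by $\Theta(\delta)\gamma(t)$ when $\gamma(t)$ counts all traces $\le t$. Restricting to traces in $(\delta,t]$ repairs the jump term, but this is automatic in the paper's version because the shells $\Om(\eta-\tau,\eta+\tau)$ lie in $\{u>\delta\}$. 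Your closing exponent count is correct: once the symmetric shell inequality is in place, the lower bound on $\int|\nabla u|$ and the Gronwall upper bound on $g$ combine at $\tau=\tfrac{1-\delta}{8}$ to yield $1-\delta\le C_n\Theta(1)\Theta(\delta)^{-1/2}(M-\om_n)^{1/2n}$. The fix, therefore, is to upgrade the contradiction hypothesis to the two-sided slab removal and recentre the shells at $\eta$; after that your outline goes through essentially as you describe.
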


This lemma is applicable in particular when $\delta$ is close to $1$ and $M-\om_n$ is small.\\

\begin{proof} This proof is very similar to Lemma \ref{apriori}, so some steps are only briefly described.
We actually show the stronger result; there is some $t>s>\delta$ such that \[\E_{\Theta}\left(u1_{\{s<u<t\}^c}\right)\leq \E_{\Theta}(u).\]

Let $\eps=1-\delta>0$, suppose $\E_{\Theta}(u1_{\{s<u<t\}^c})> \E_{\Theta}(u)$ for every $1-\eps<s<t<1$. We write
\begin{itemize}[label=\textbullet]\setlength\itemsep{1em}
\item $\Om\left(s,t\right)=\{s<u\leq t\}$.
\item $\gamma\left(s,t\right)=\int_{J_u}\left( 1_{s<u_+\leq t}+1_{s<u_-\leq t}\right)\ds$.
\item $h(t)=\Hs(\{u=t\}\setminus J_u)$.
\end{itemize}
Let $\eta:=1-\frac{\eps}{2}$, our hypothesis becomes that for every, $t\in ]0,\frac{\eps}{2}[$, $\E_{\Theta}(u1_{\{\eta-t<u<\eta+t\}^c})>\E_{\Theta}(u)$ so

\begin{equation}\label{Comp}\int_{\Om\left(\eta-t,\eta+t\right)}|\nabla u|^2\dx+\Theta(1-\eps)\gamma(\eta-t,\eta+t)\leq \Theta(1) \left(h\left(\eta-t\right)+h\left(\eta+t\right)\right).\end{equation}

The proof is, as previously, based on a lower bound of $\int_{\Om\left(\eta-t,\eta+t\right)}|\nabla u|\dx$ and an upper bound of $\int_{\Om\left(\eta-t,\eta+t\right)}|\nabla u|^2\dx$ that are in contradiction when $\eps$ is too large.

\begin{itemize}[label=\textbullet]\setlength\itemsep{1em}
\item For all $t\in ]0,\frac{1}{2}\eps[$ we let:
\[f(t)=\int_{\eta-t}^{\eta+t}h=\int_{\Om\left(\eta-t,\eta+t\right)}|\nabla u|\dx.\]
$f$ is absolutely continuous and
\[f'(t)=h\left(\eta-t\right)+h\left(\eta+t\right).\]
Moreover,
\begin{align*}
f(t)&\leq |\Om\left(\eta-t,\eta+t\right)|^{\frac{1}{2}}\left(\int_{\Om\left(\eta-t,\eta+t\right)}|\nabla u|^2\dx\right)^\frac{1}{2}\\
&\leq C_n\text{Per}(\Om\left(\eta-t,\eta+t\right))^\frac{n}{2(n-1)}\left(\Theta(1)(h\left(\eta-t\right)+h\left(\eta+t\right))\right)^\frac{1}{2}\ \text{  by isoperimetric inequality}\\
&\leq C_{n}\Theta(1)^\frac{1}{2}\left(h\left(\eta-t\right)+\gamma\left(\eta-t,\eta+t\right)+h\left(\eta+t\right)\right)^\frac{n}{2(n-1)}\left(h\left(\eta-t\right)+h\left(\eta+t\right)\right)^\frac{1}{2}\\
&\leq C_{n}\left(\frac{\Theta(1)}{\Theta(1-\eps)}\right)^{\frac{n}{2(n-1)}}\Theta(1)^{\frac{1}{2}} f'(t)^{\frac{2n-1}{2n-2}},
\end{align*}
so:
\[f'(t)f(t)^{-\frac{2n-2}{2n-1}}\geq c_n\Theta(1)^{-\frac{n-1}{2n-1}}\left(\frac{\Theta(1)}{\Theta(1-\eps)}\right)^{-\frac{n}{2n-1}}.\]
We integrate on $[0,t]$:
\[\left(\int_{\Om\left(\eta-t,\eta+t\right)}|\nabla u|\dx\right)^{\frac{1}{2n-1}}\geq c_{n}\Theta(1)^{-\frac{n-1}{2n-1}}\left(\frac{\Theta(1)}{\Theta(1-\eps)}\right)^{-n}t.\]
And so:
\[\int_{\Om\left(\eta-t,\eta+t\right)}|\nabla u|\dx\geq c_{n}\Theta(1)^{-(n-1)}t^{2n-1}.\]
\item We let:
\[g(t)=\int_{\Om\left(\eta-t,\eta+t\right)}|\nabla u|^2\dx,\ G(t)=\int_{0}^t g\]
We begin by finding a upper bound for $G$. We take $t$ in $[0,\eps/2]$, then:
\begin{align*}
G(t)&\leq \int_{0}^{t}\Theta(1)( h(\eta-s)+h(\eta+s))\mathrm{d}s\\
&\leq c_{n}\Theta(1)|\Om\left(\eta-t,\eta+t\right)|^\frac{1}{2}G'(t)^\frac{1}{2}.
\end{align*}
Thus:
\[G'(t)G(t)^{-2}\geq c_{n}\Theta(1)^{-2}|\Om\left(1-\eps,1\right)|^{-1}.\]
We integrate from $t$ to $2t$ (up to supposing $t<\eps/4$):
\[G(t)^{-1}\geq c_{n}t\Theta(1)^{-2}|\Om\left(1-\eps,1\right)|^{-1}.\]
Thus:
\[G(t)\leq C_{n}\Theta(1)^2 t^{-1}|\Om\left(1-\eps,1\right)|.\]
Since $g$ is increasing, then up to supposing $t<\eps/8$:
\[g(t)\leq\frac{1}{t}\int_{t}^{2t}g\leq\frac{G(2t)}{t}\leq  c_{n}\Theta(1)^2t^{-2}|\Om\left(1-\eps,1\right)|.\]
\end{itemize}
Combining the previous inequalities, that are valid for  $t\in [0,\eps/8]$, we get:
\begin{align*}
c_{n}\Theta(1)^{-(n-1)}\left(\frac{\Theta(1)}{\Theta(1-\eps)}\right)^{-n}t^{2n-1}&\leq \int_{\Om\left(\eta-t,\eta+t\right)}|\nabla u|\dx\\
&\leq |\Om\left(\eta-t,\eta+t\right)|^\frac{1}{2}\left(\int_{\Om\left(\eta-t,\eta+t\right)}|\nabla u|^2\dx\right)^\frac{1}{2}\\
&\leq C_n\Theta(1) t^{-1}|\Om\left(1-\eps,1\right)|.
\end{align*}
Taking $t=\eps/8$, and with $|\Om(1-\eps,1)|\leq M-\om_n$, we get:
\[\eps\leq C_n\Theta(1)\Theta(1-\eps)^{-\frac{1}{2}}(M-\om_n)^\frac{1}{2n},\]
for a constant $C_{n}>0$ that only depends on $n$; this proves the result.
\end{proof}

\noindent {\it Proof of Theorem \ref{bnnt03.2} (continuation).}
Let $\Theta$ be l.s.c. and nondecreasing. Suppose moreover that it is of class $\mathcal{C}^1$ near $1$ and 
\[\frac{\Theta'(1)^2}{\Theta(1)}< 4(n-1).\]
Then there is some $M_0>\om_n$ depending on $n,\Theta$ such that, for any $M\in [\om_n,M_0]$ and for any admissible $u$ with $|\{u\ge 1\}|\ge \om_n$, $|\{u> 0\}|\le M$,
\[\E_\Theta(u)\geq E_{\Theta}(B_1,B_1)\left(=\Theta(1) n\om_n\right).\]

Denote for simplicity $\Om=\{u>0\}$, $K=\{u=1\}$, and let $\delta$ be defined as in the previous result (it is always possible when $M-\om_n$ is small enough), and suppose that it is close enough to $1$ such that $\Theta$ in $\mathcal{C}^{1}$ on $[\delta,1]$. Using the previous lemma, we lose no generality in supposing $u_{|\Om}\geq\delta$, and $|\Om|=M$. Then
\begin{align*}
\int_{\partial \Om}\Theta(u)\ds+\int_{\Om}\Theta'(u)|\nabla u|\dx&=\int_{\partial \Om}\Theta(u)\ds+\int_{\delta}^{1}\Theta'(t)\Per(\{u>t\};\Om)\mathrm{d}t\\
&= \int_{\partial \Om}\Theta(u)\ds-\int_{\delta}^{1}\Theta'(t)\Per(\{u>t\};\partial\Om)\mathrm{d}t\\
&\ +\int_{\delta}^{1}\Theta'(t)\Per(\{u>t\})\mathrm{d}t\\
&=\Theta(\delta)\Per(\Om)+\int_{\delta}^{1}\Theta'(t)\Per(\{u>t\})\mathrm{d}t\\
&\geq \Theta(\delta)\left(\frac{M}{\om_n}\right)^{1-\frac{1}{n}}\Per(B_1)+\int_{\delta}^{1}\Theta'(t)\Per(B_1)\mathrm{d}t\\
&=\Theta(1)\Per(B_1)+\Theta(\delta)\Per(B_1)\left(\left(\frac{M}{\om_n}\right)^{\frac{n-1}{n}}-1\right).
\end{align*}
Now, 
\[\int_{\Om}\Theta'(u)|\nabla u|\dx\leq \int_{\Om}\Vert\Theta'\Vert_{\infty,[\delta,1]}|\nabla u|\dx\leq \int_{\Om}|\nabla u|^2\dx+\frac{\Vert\Theta'\Vert_{\infty,[\delta,1]}^2}{4}(M-\om_n),\]
Thus,
\begin{align*}
\int_{\Om}|\nabla u|^2\dx+\int_{\partial\Om}\Theta(u)\ds-\Theta(1)\Per(B_1)&\geq \Theta(\delta)\Per(B_1)\left(\left(\frac{M}{\om_n}\right)^{\frac{n-1}{n}}-1\right)\\
&-\frac{\Vert\Theta'\Vert_{\infty,[\delta,1]}^2}{4}(M-\om_n).
\end{align*}
So we obtain the result as soon as
\[\frac{\Vert\Theta'\Vert_{\infty,[\delta,1]}^2}{\Theta(\delta)}\leq 4\Per(B_1)\frac{\left(\frac{M}{\om_n}\right)^{\frac{n-1}{n}}-1}{M-\om_n}.\]
As $M\to \om_n$, we may take $\delta\to 1$, and this gives the result.\bigbreak

\end{proof}

\begin{remark}\rm
Conversely, suppose that 
\[\frac{\Theta'(1)^2}{\Theta(1)}> 4(n-1),\]
then for every $R>1$ close enough to $1$, $E_{\Theta}(B_1,B_R)<E_{\Theta}(B_1,B_1)$.
 Indeed, let $\eps>0$ and \[u_\eps(x)=\begin{cases}1 & (B_1),\\ 1-\frac{|x|-r}{2}\Theta'(1) & (B_{1+\eps}\setminus B_1).\end{cases}\]
Then
\begin{align*}
E_{\Theta}(B_1,B_{1+\eps})&\leq \E_{\Theta}(u_\eps)=n\om_n(1+\eps)^{n-1}\Theta\left(1-\frac{1}{2}\Theta'(1)\eps\right)+\om_n\left((1+\eps)^n-r^n)\right)\frac{\Theta'(1)^2}{4}\\
&=E_{\Theta}(B_1,B_1)+\left( (n-1)\Theta(1)-\frac{1}{4}\Theta'(1)^2\right)\eps\Per(B_1)+o_{\eps\to 0}\left(\eps\right).
\end{align*}
The first-order term is negative and this proves the converse for a small enough $\eps$.
\end{remark}

\bibliographystyle{plain}
\bibliography{biblio}

\begin{thebibliography}{10}

\bibitem{AFP00}
Luigi Ambrosio, Nicola Fusco, and Diego Pallara.
\newblock {\em Functions of bounded variation and free discontinuity problems}.
\newblock Oxford Mathematical Monographs. The Clarendon Press, Oxford
  University Press, New York, 2000.

\bibitem{BBC08}
Richard~F. Bass, Krzysztof Burdzy, and Zhen-Qing Chen.
\newblock On the {R}obin problem in fractal domains.
\newblock {\em Proc. Lond. Math. Soc. (3)}, 96(2):273--311, 2008.

\bibitem{B88}
Marie-H\'{e}l\`ene Bossel.
\newblock Membranes \'{e}lastiquement li\'{e}es inhomog\`enes ou sur une
  surface: une nouvelle extension du th\'{e}or\`eme isop\'{e}rim\'{e}trique de
  {R}ayleigh-{F}aber-{K}rahn.
\newblock {\em Z. Angew. Math. Phys.}, 39(5):733--742, 1988.

\bibitem{B98}
Andrea Braides.
\newblock {\em Approximation of free-discontinuity problems}, volume 1694 of
  {\em Lecture Notes in Mathematics}.
\newblock Springer-Verlag, Berlin, 1998.

\bibitem{BL09}
Tanguy Brian\c{c}on and Jimmy Lamboley.
\newblock Regularity of the optimal shape for the first eigenvalue of the
  {L}aplacian with volume and inclusion constraints.
\newblock {\em Ann. Inst. H. Poincar\'{e} Anal. Non Lin\'{e}aire},
  26(4):1149--1163, 2009.

\bibitem{BG10}
Dorin Bucur and Alessandro Giacomini.
\newblock A variational approach to the isoperimetric inequality for the
  {R}obin eigenvalue problem.
\newblock {\em Arch. Ration. Mech. Anal.}, 198(3):927--961, 2010.

\bibitem{BG15}
Dorin Bucur and Alessandro Giacomini.
\newblock Faber-{K}rahn inequalities for the {R}obin-{L}aplacian: a free
  discontinuity approach.
\newblock {\em Arch. Ration. Mech. Anal.}, 218(2):757--824, 2015.

\bibitem{BGN20}
Dorin Bucur, Alessandro Giacomini, and Micka\"{e}l Nahon.
\newblock Degenerate {F}ree {D}iscontinuity {P}roblems and {S}pectral
  {I}nequalities in {Q}uantitative {F}orm.
\newblock {\em Arch. Ration. Mech. Anal.}, 242(1):453--483, 2021.

\bibitem{BL14}
Dorin Bucur and Stephan Luckhaus.
\newblock Monotonicity formula and regularity for general free discontinuity
  problems.
\newblock {\em Arch. Ration. Mech. Anal.}, 211(2):489--511, 2014.

\bibitem{CK16}
Luis~A. Caffarelli and Dennis Kriventsov.
\newblock A free boundary problem related to thermal insulation.
\newblock {\em Comm. Partial Differential Equations}, 41(7):1149--1182, 2016.

\bibitem{D06}
Daniel Daners.
\newblock A {F}aber-{K}rahn inequality for {R}obin problems in any space
  dimension.
\newblock {\em Math. Ann.}, 335(4):767--785, 2006.

\bibitem{DNT20}
Francesco Della~Pietra, Carlo Nitsch, and Cristina Trombetti.
\newblock An optimal insulation problem.
\newblock {\em Math Ann}, Online, 2020.

\bibitem{HL11}
Qing Han and Fanghua Lin.
\newblock {\em Elliptic partial differential equations}, volume~1 of {\em
  Courant Lecture Notes in Mathematics}.
\newblock Courant Institute of Mathematical Sciences, New York; American
  Mathematical Society, Providence, RI, second edition, 2011.

\bibitem{Kr19}
Dennis Kriventsov.
\newblock A free boundary problem related to thermal insulation: flat implies
  smooth.
\newblock {\em Calc. Var. Partial Differential Equations}, 58(2):Paper No. 78,
  83, 2019.

\bibitem{M12}
Francesco Maggi.
\newblock {\em Sets of finite perimeter and geometric variational problems},
  volume 135 of {\em Cambridge Studies in Advanced Mathematics}.
\newblock Cambridge University Press, Cambridge, 2012.
\newblock An introduction to geometric measure theory.

\bibitem{MHH11}
Frank Morgan, Sean Howe, and Nate Harman.
\newblock Steiner and {S}chwarz symmetrization in warped products and fiber
  bundles with density.
\newblock {\em Rev. Mat. Iberoam.}, 27(3):909--918, 2011.

\end{thebibliography}

\end{document}